\documentclass[11pt]{article}
\usepackage[T1]{fontenc}
\usepackage[utf8]{inputenc}
\usepackage{lmodern}
\usepackage[a4paper,margin=1in]{geometry}
\usepackage{amsmath,amssymb,amsthm}
\usepackage{xcolor}
\usepackage{microtype}
\usepackage{enumitem}
\usepackage[hidelinks]{hyperref}
\hypersetup{pdftitle={Periodic Tilings of Cardinality Twice a Prime or Nine in Arbitrary Dimension},
  pdfauthor={Hu Tan and Ying Zhang}}
\newtheorem{theorem}{Theorem}[section]
\newtheorem{proposition}[theorem]{Proposition}
\newtheorem{lemma}[theorem]{Lemma}
\newtheorem{corollary}[theorem]{Corollary}
\theoremstyle{definition}

\theoremstyle{remark}
\newtheorem{remark}[theorem]{Remark}

\title{Periodic Tilings of Cardinality Twice a Prime or Nine \\ in Arbitrary Dimension}

\date{}

\begin{document}
\maketitle
\vspace{-1.5em}
\author{
\begin{tabular}[t]{c} 
Hu Tan\textsuperscript{1}\\[3pt] 
\small\textsuperscript{1}Academy of Mathematics and Systems Science\\ 
\small Chinese Academy of Sciences\\ 
\small Beijing 100190, China\\ 
\small\texttt{tanhu2020@amss.ac.cn} 
\end{tabular} 
\and 
\begin{tabular}[t]{c} 
Ying Zhang\textsuperscript{2,}\\[3pt] 
\small\textsuperscript{2}School of Mathematical Sciences\\ 
\small Soochow University\\ 
\small Suzhou 215006, China\\ 
\small \texttt{yzhang@suda.edu.cn} 
\end{tabular}
}
\vspace{5em}
\begin{abstract}

We prove that every finite translational tile of $\mathbb Z^d$ of
cardinality $2q$, where $q$ is an odd prime, or of cardinality nine
admits a fully periodic tiling complement. The result holds in every
dimension, regardless of the rank of the subgroup generated by the
differences of points of the tile. The proofs combine coprime dilation,
vanishing sums of roots of unity, and periodic two-colorings. After
translating the tile, both arguments restrict to its intrinsic lattice,
and every periodic complement constructed there extends to the ambient
lattice. For cardinality $2q$, spectral filtering in the intrinsic
lattice yields either a lattice complement or a periodic twofold
covering compatible with a tiling complement. This compatibility allows
the covering to be split by a periodic proper two-coloring. For nine
points, intrinsic ranks $1$ and $2$ are treated separately; in higher
intrinsic rank, the algebraic reduction in the intrinsic lattice yields
either a lattice complement or spectral support on finitely many
rational affine circles. These results also give a decision algorithm
for tileability in both cardinality families. The circle-supported
alternative is resolved by a conditional-density dichotomy and a
periodic replacement argument for the exceptional half-density components.

\end{abstract}
\medskip
\noindent\textit{2020 Mathematics Subject Classification.}
Primary 05B45; Secondary 37B52, 43A25, 52C22.

\smallskip
\noindent\textit{Keywords.}
Translational tilings, periodic tiling conjecture, tile cardinality,
vanishing sums of roots of unity, spectral methods, periodic colorings.

\section{Introduction}\label{sec:intro}

Translational tiling lies at the intersection of discrete geometry,
additive combinatorics, harmonic analysis, and the factorization
theory of abelian groups. Let $d\geq1$ be an integer. In the discrete
setting, a finite set $F\subseteq\mathbb Z^d$ \emph{tiles by translations}
if there is a set $A\subseteq\mathbb Z^d$ such that every lattice point
has a unique representation $x=f+a$, with $f\in F$ and $a\in A$.
We write $F\oplus A=\mathbb Z^d$ and call $A$ a tiling complement
(or co-tile). The subgroup
$\langle F-F\rangle\leq\mathbb Z^d$ generated by the differences of
points of $F$ is the \emph{intrinsic lattice} of $F$, and its rank is
the \emph{intrinsic rank}. This viewpoint is rooted in the classical
theory of factorizations of finite and cyclic groups and its connections
with geometric tiling problems; see
\cite{CM,deBruijn,GrunbaumShephard,Stein}.

A tiling complement $A\subseteq\mathbb Z^d$ is \emph{fully periodic}
if $A+\Gamma=A$ for some finite-index subgroup
$\Gamma\leq\mathbb Z^d$. A fully periodic complement is determined by
a finite pattern in a finite quotient of $\mathbb Z^d$. The periodic tiling
problem asks whether every finite tile admitting a tiling complement
also admits a fully periodic one. This question has its roots in the
classical theory of translational tilings; see
\cite{GrunbaumShephard,LagariasWang} and the historical account in
\cite{GreenfeldSurvey}.

Periodic complements connect infinite exact-covering problems to
factorizations of finite abelian groups. In particular, they provide
finite certificates for tileability. This connection underlies
decision procedures for classes of tiles for which periodic
tileability holds; see
\cite{Bhattacharya,GTCounterexample,GTStructure,Szegedy}.

\subsection{Background and main result}

The one-dimensional theory is completely periodic. Newman proved
that every tiling of $\mathbb Z$ by a finite set is periodic
\cite{Newman}, while later work gave finer information on the
arithmetic structure and possible periods of integer tilings
\cite{CM,LZ}. In dimension two, Bhattacharya proved that every finite
tile admitting a tiling also admits a fully periodic complement
\cite{Bhattacharya}. Greenfeld and Tao subsequently obtained
quantitative periodicization results and a stronger structural
description: every planar tiling complement is a finite disjoint
union of sets, each invariant under a nonzero translation
\cite{GTStructure}.

The picture changes in high dimension. Greenfeld and Tao constructed,
in sufficiently large dimension, a finite translational tile that
admits tilings but no fully periodic complement
\cite{GTCounterexample}. Thus periodic tileability cannot hold in
complete generality, and the problem becomes one of identifying
natural hypotheses that force periodicity. Low dimension remains a
major source of rigidity; in particular, the general periodic tiling
problem for $\mathbb Z^3$ remains open; see
\cite[Question~5.1]{GreenfeldSurvey}.

A second source of rigidity is the cardinality of the tile. Szegedy
proved periodic tileability in every dimension for tiles of prime
cardinality and for four-point tiles \cite{Szegedy}. For
prime-cardinality tiles containing zero and generating their ambient
lattice, a lattice complement exists; an explicit algebraic proof
is given by Horak and Kim \cite[Theorem~19]{HorakKim}, who also
note the earlier statement in \cite{Szegedy}. These results
show that arithmetic restrictions on $|F|$ can force periodicity
independently of the ambient dimension.

Greenfeld and Tao \mbox{\cite[Question~10.4]{GTCounterexample}} asked
how few prime factors the cardinality of an aperiodic tile can have.
Throughout this paper, prime factors are counted with multiplicity.
Here, for an integer $d\geq1$ and a finite set
$F\subseteq\mathbb Z^d$, we call $F$ an \emph{aperiodic tile} if
$F$ tiles $\mathbb Z^d$ by translations but has no fully periodic
tiling complement. Khetan proved that if $|F|=p^2$ for a prime $p$
and $F$ tiles $\mathbb Z^3$, then it admits a weakly periodic
complement, namely a
finite disjoint union of sets each invariant under a nonzero
translation \cite[Theorem~4.1]{Khetan}.

Our main result establishes full periodicity for two composite
cardinality families, with no restriction on dimension.

\begin{theorem}\label{thm:main}
Let $d\geq1$ and let $F\subseteq\mathbb Z^d$ be finite. Suppose that
either $|F|=2q$ for an odd prime $q$, or $|F|=9$. If $F$ tiles
$\mathbb Z^d$ by translations, then it admits a fully periodic
tiling complement. Equivalently, there exist $M\geq1$ and
$B\subseteq\mathbb Z^d$ such that
\[
F\oplus B=\mathbb Z^d,
\qquad
B+M\mathbb Z^d=B.
\]
\end{theorem}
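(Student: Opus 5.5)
We first reduce to the intrinsic lattice. Translate $F$ so that $0\in F$, and put $L=\langle F-F\rangle$, of rank $r\le d$. Any tiling $F\oplus A=\mathbb Z^d$ restricts coset by coset: for each coset $x+L$, the set $A\cap(x+L)$ translated back into $L$ is a tiling complement of $F$ in $L$. In particular $F$ tiles $L$. Conversely, suppose $B_0\subseteq L$ is a complement of $F$ in $L$ that is periodic under a finite-index subgroup $\Gamma_0\le L$. Choose a set of coset representatives $R$ for $\mathbb Z^d/L$. Since $L$ is a pure-or-not subgroup of finite index in its saturation, we can choose $R$ periodic under a finite-index lattice $\Lambda$ with $\Lambda\cap L\subseteq\Gamma_0$. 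This is done by taking a basis adapted to the Smith normal form of $L\le\mathbb Z^d$. Then $B=B_0+R$ is fully periodic. So it suffices to prove the theorem when $F$ generates $\mathbb Z^r$, and we identify $L\cong\mathbb Z^r$.

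Next we use coprime dilation and the Fourier condition. Let $A$ be a complement, fix a large period box, and use the standard fact (Tijdeman/Szegedy) that for every $t$ coprime to $|F|$ the dilate $tF$ also tiles with the same complement. Averaging gives the spectral condition: $\widehat{\mathbf 1_A}$ (as a distribution on $\mathbb T^r$) is supported in $Z(F)\cup\{0\}$, where $Z(F)=\{\xi:\sum_{f\in F}e(f\cdot\xi)=0\}$. Moreover, by dilation invariance this support is closed under $\xi\mapsto t\xi$ for all $t$ coprime to $|F|$. We then analyze the vanishing sums of roots of unity. If $\xi$ is rational of order $m$, the relation $\sum_f \zeta_m^{f\cdot\xi'}=0$ with $|F|=2q$ or $9$ is, by Lam--Leung, a nonnegative combination of the minimal relations of lengths $2$ and $q$ (respectively a sum of $3$-term relations). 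So the values $f\cdot\xi$ partition $F$ into pairs at antipodes or $q$-cycles (resp.\ triples in cosets of $\tfrac13\mathbb Z$). Irrational components of the support force $F$ to lie on rational affine subspaces, which gives the "circle" alternative for $|F|=9$.

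For $|F|=2q$, spectral filtering works as follows. If some dilation-invariant rational frequency $\xi$ of order $q$ partitions $F$ into two $q$-cycles, then $F$ is a complete set of residues modulo the sublattice $\{x:q\xi\cdot x\equiv\ldots\}$ combined with a parity condition, and we obtain a lattice complement directly. Otherwise every relevant frequency has order $2$ and pairs $F$ into antipodal pairs. In that case $F\oplus A$ projects to a statement that $F'=\{f \bmod \text{(index-2 lattice)}\}$ gives a periodic twofold covering $F\oplus A'=2\cdot\mathbb Z^r$, where $F'$ is a $q$-point set. By Szegedy's prime theorem, $F'$ admits a lattice complement $\Lambda$, so $F\oplus\Lambda$ is a periodic twofold cover. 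We then split this cover by a periodic proper two-coloring of an auxiliary graph on $\Lambda$: join $a,a'$ when $F+a$ and $F+a'$ overlap. Compatibility with the genuine complement $A$ shows this graph is bipartite (no odd cycles, since $A$ selects half consistently), and a periodic bipartition exists because the graph is periodic and connected components are finitely many orbits. One color class is the desired complement.

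For $|F|=9$, rank $1$ is Newman's theorem and rank $2$ is Bhattacharya's. In rank $\ge3$, the algebraic reduction gives either a $3$-adic lattice complement (iterating the prime-$3$ case along a filtration $F\to F \bmod 3$), or spectral support on finitely many rational affine circles. The last case is the main obstacle. There we would decompose $A$ according to the circle directions, compute the conditional density of $A$ along each rational line, and show that it is either $1/9$-structured, hence locally a lattice tiling, or exactly half in some exceptional components. On those half-density components we would replace $A$ by a periodic set with the same line-by-line coverage, using the rank-$1$ periodicity of the induced one-dimensional tilings. I expect making this replacement globally consistent to be the hardest step. Finally, all the constructed complements are periodic with an explicit bound, which also yields the decision algorithm.
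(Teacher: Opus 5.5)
Your intrinsic-lattice reduction and your use of Newman and Bhattacharya for nine-point tiles of intrinsic rank $1$ and $2$ are fine. The twice-prime argument, however, has a genuine gap at its core: the construction and splitting of the twofold covering.

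As written, reducing $F$ modulo an index-$2$ lattice leaves at most two residues. So there is no $q$-point tile $F'$ to feed into Szegedy's theorem, and a lattice complement of an auxiliary set gives no identity $1_F*1_\Lambda=2$. More seriously, bipartiteness of the overlap graph is not automatic for an arbitrary periodic twofold covering. Your phrase ``$A$ selects half consistently'' presupposes that a translate of the tiling vanishes where the covering has multiplicity $0$ and equals $1$ where it has multiplicity $2$. Nothing ties $F\oplus\Lambda$ to $A$ in this way.

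The missing idea is the Frobenius congruence. Since $1_{qF}\equiv1_F^{*q}$ in $(\mathbb Z/q\mathbb Z)[G]$, one gets $1_{qF}*a\equiv(2q)^{q-1}\equiv0\pmod q$. Hence $c=q^{-1}1_{qF}*a$ is a $\{0,1,2\}$-valued covering with $1_F*c=2$, built from $a$ itself, and $b(x)=a(x-qf_0)$ automatically satisfies both support conditions. Periodicity of $c$ holds for a generic $a$ of an invariant measure on $X_F$, not for your original $A$. It comes from $d\sigma_{C_0}=q^{-2}|P_q|^2\,d\sigma_h$ for the centered observables, because $P_q$ vanishes on every opposite-pair character ($z^q+(-z)^q=0$).

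Your dichotomy is also misstated. The relevant characters need not be rational: the circle $\theta_t(me_1+ne_2)=(-1)^mt^n$ lies in $Z_F$ for the $2\times q$ rectangle. Moreover, a two-$q$-cycle character yields a lattice complement only when the within-cycle difference group $H$ has rank $r-1$, so that $G=H\oplus\mathbb Za$. When $H$ has full rank you get instead $\theta\in(qH)^\perp$, a finite torsion set, and that is exactly what makes $c$ periodic.

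Finally, Lam--Leung describes only the possible lengths of vanishing sums, not their shape. For example, $\zeta_5+\zeta_5^2+\zeta_5^3+\zeta_5^4-\omega-\omega^2=0$, with $\omega=e^{2\pi i/3}$, is a six-term vanishing sum of $30$th roots of unity with no opposite pair and no rotated $3$-cycle. Likewise $\mu_7\cup\{1,-1\}$ is a nine-term vanishing sum containing no rotated $3$-cycle. You need the dilations $1+Qj$ and a Vandermonde argument to split $F$ into zero-sum blocks whose phase ratios lie in $\mu_{2q}$ (resp.\ $\mu_3$) before any classification applies.

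For $|F|=9$ the decisive case is left open, as you acknowledge, and parts of the sketch point in the wrong direction. The lattice-complement alternative is not an iteration of the prime case. It occurs when some within-block difference group has rank $r-2$, which forces three blocks of size three and a bijection of $F$ onto $(\mathbb Z/3\mathbb Z)^2$. In the circle case, ``$1/9$-structured, hence locally a lattice tiling'' is not what happens. It is also unclear what the ``induced one-dimensional tilings'' are, since the translates of $F$ meeting a line come from many parallel lines. What is actually needed is the following.
\begin{enumerate}
\item[(i)] A conditional-density dichotomy. Write $1_D=f_0+\sum_if_i$ with $f_i=P_{\Lambda_i}1_D$ and $f_0$ of finite torsion spectrum. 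Show that on the ergodic components of a suitable finite-index subgroup each $f_i\bmod 1$ has a Dirac or Haar law, using Weyl equidistribution of the polynomial sequences forced by $(U_{h_j}-I)^s\overline f_j=0$. Dirac laws give $(d-1)$-periodicity of $D\cap E$, while Haar laws together with $0\le f_i\le1$ force $\mu_E(D)=1/2$.
\item[(ii)] A pairing of exceptional components. The partition $E=\bigsqcup_{u\in F}T_uD_{T_{-u}E}$, together with closure of piecewise $(d-1)$-periodic sets under relative complements in $d$-periodic sets, shows that every component has zero or two exceptional translates. The exceptional sites therefore carry a periodic graph properly two-colored by the tiling, which can be recolored periodically.
\item[(iii)] The Meyerovitch--Sanadhya--Solomon theorem, which upgrades the resulting piecewise $(d-1)$-periodic complement to a fully periodic one.
\end{enumerate}
None of these ingredients appears in your proposal. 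Lastly, these arguments are not effective, so they give no explicit period bound. Decidability comes instead from running the periodic search and the compactness search in parallel.
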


The theorem imposes no restriction on the affine span or intrinsic
rank of $F$. It covers, in particular, the cardinalities
$6,9,10,14,22,26,\ldots$. Together with the singleton and prime cases
and the known four-point case, it proves periodic tileability for every
positive cardinality below twelve except possibly eight.

The twice-prime family also places the four-point theorem in a natural
sequence. The case $|F|=4$ is the $q=2$ endpoint of the same
arithmetic pattern, although the coincidence of the two prime
factors changes the vanishing-sum algebra. We therefore treat it
separately in Subsection~\ref{sec:four}.

For nine-point tiles, our result should be compared with Khetan's
prime-square theorem in $\mathbb Z^3$ \cite{Khetan}. Khetan obtains
weak periodicity for $p^2$-point tiles in dimension three. At $p=3$,
we upgrade this to full periodicity and remove the restriction on
dimension. Thus aperiodic tiles cannot have cardinality $2q$ or $9$;
among cardinalities with exactly two prime factors, counted with
multiplicity, the unresolved families are products of two distinct odd
primes and squares $p^2$ with $p\geq5$.

\subsection{Outline of the proof}

The proofs for $|F|=2q$ and $|F|=9$ begin from the same algebraic
principle and diverge when the possible vanishing sums of roots of
unity are analyzed.

Let $d\geq1$ be an integer, let $F\subseteq\mathbb Z^d$ be finite, and
let $A\subseteq\mathbb Z^d$ satisfy $F\oplus A=\mathbb Z^d$. Define the
associated tiling indicator $a:=1_A\colon\mathbb Z^d\to\{0,1\}$ by
\[
a(x)=1_A(x)=
\begin{cases}
1,&x\in A,\\
0,&x\notin A.
\end{cases}
\]
The coprime dilation identity is $1_{sF}*a=1$ for every positive
integer $s$ with $\gcd(s,|F|)=1$; see~\cite{GGR,HorakKim,Szegedy}.
On the spectral side, these identities
constrain the characters in the support of the centered tiling
observable. The character values on $F$ split into zero-sum blocks
in which all phase ratios are roots of unity of a prescribed order.
After normalizing each block by one of its phases, we can apply the
arithmetic of vanishing sums of roots of unity.

For both cardinality families, we translate the tile and restrict the
given tiling to its intrinsic lattice. A periodic complement obtained
there extends to the ambient lattice. In the nine-point case, intrinsic
ranks $1$ and $2$ are handled by Newman and Bhattacharya, respectively,
before the higher-rank argument.

\smallskip\noindent
\emph{Twice-prime cardinality.}
For $|F|=2q$, with $q$ an odd prime, the relevant vanishing sums
decompose into either $q$ opposite pairs or two $q$-cycles, by the
two-prime theory of vanishing sums~\cite{deBruijn,LamLeung,PoonenRubinstein}.
In the pair case, the
multiplier $P_q(\theta)=\sum_{u\in F}\theta(qu)$ annihilates the
corresponding spectral contribution. In the two-cycle case, the
difference structure either yields a lattice complement or forces the
filtered observable to have finite torsion spectrum.

The Frobenius congruence makes $c=q^{-1}1_{qF}*a$ an
integer-valued twofold covering. The spectral analysis makes this
covering periodic for a suitable tiling indicator $a$. Moreover,
each translate $b(x)=a(x-qf_0)$, $f_0\in F$, vanishes where
$c=0$ and equals one where $c=2$. This support compatibility
provides a proper two-coloring on the multiplicity-one sites,
which can be replaced by a periodic coloring to obtain a
periodic tiling complement. The coloring lemma is isolated in
Section~\ref{sec:split} for later use.

\smallskip\noindent
\emph{Nine-point cardinality.}
For $|F|=9$, the dilation equations partition the character values
on $F$ into zero-sum blocks whose sizes are divisible by three.
There are therefore at most three blocks. The subgroups generated by
within-block differences determine the geometry of the spectral
support. If the subgroup generated by within-block differences has
rank two less than the intrinsic rank, $F$ projects bijectively onto
$(\mathbb Z/3\mathbb Z)^2$, which gives a lattice complement.
Otherwise the spectral measure is supported on finitely many rational
affine circles in the dual torus.

For an integer $d\geq1$, a \emph{rational affine circle} in
$\widehat{\mathbb Z^d}$ is a torsion translate of a connected
one-dimensional subtorus of $\widehat{\mathbb Z^d}$.

Let $\varnothing\ne F\subseteq\mathbb Z^d$ be finite. Define
\[
X_F:=\{a\in\{0,1\}^{\mathbb Z^d}:
       a\text{ is a tiling indicator of }F\}.
\]
Give $\{0,1\}^{\mathbb Z^d}$ the product topology, with $\{0,1\}$
discrete, and give $X_F$ the inherited subspace topology.
Equivalently, $a\in X_F$ exactly when
$\sum_{f\in F}a(x-f)=1$ for every $x\in\mathbb Z^d$. Each such
equation involves only finitely many coordinates, so $X_F$ is closed
in the compact product $\{0,1\}^{\mathbb Z^d}$ and hence is compact.
Let $G=\mathbb Z^d$ act on $X_F$ by the shifts
\[
 (T_ga)(x)=a(x+g).
\]
Define the origin-coordinate event by
\[
 D:=\{a\in X_F:a(0)=1\}.
\]
The coordinate map $a\mapsto a(0)$ is continuous, so $D$ is clopen in
$X_F$ and hence is Borel measurable. For an invariant Borel probability
measure $\mu$ on $X_F$, the Koopman convention is
$U_g\phi=\phi\circ T_{-g}$. Invariance and ergodicity below refer to
this shift action, and spectral measures refer to this Koopman
representation.

\begin{samepage}

\begin{theorem}[Spectral-circle periodicization]\label{thm:circle}
Let $F\subseteq\mathbb Z^d$ be nonempty and finite, with $d\geq2$.
Suppose the shift action on $X_F$ admits an ergodic invariant
probability measure $\mu$ for which the spectral measure of $1_D$ is
supported on finitely many rational affine circles. Then $F$ admits a fully
periodic tiling complement.
\end{theorem}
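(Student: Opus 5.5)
The plan is to use the circle-supported spectral measure to show that, $\mu$-almost surely, the tiling indicator splits into pieces that are periodic in $d-1$ independent directions. Then I would reduce the remaining one-dimensional freedom to a periodic choice, in the spirit of Newman's theorem along each circle direction.

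\textbf{Step 1: spectral reduction.} Let $C_1,\dots,C_k$ be the rational affine circles, with $C_j=\chi_j+S_j$, where $\chi_j$ is a torsion character of order $m_j$ and $S_j$ is a one-dimensional subtorus. The annihilator $\Lambda_j:=S_j^\perp\leq\mathbb Z^d$ has rank $d-1$, and $S_j$ is dual to $\mathbb Z^d/\Lambda_j\cong\mathbb Z$. Set $m:=\operatorname{lcm}(m_j)$ and $\Lambda:=m\bigcap_j\Lambda_j$. If all the circles are parallel, then every $g\in\Lambda$ has $\widehat{\,\cdot\,}(g)=1$ on the spectral support. Hence $U_g1_D=1_D$ in $L^2(\mu)$, and so $a(x+g)=a(x)$ for $\mu$-a.e.\ $a$. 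In general the circles are not parallel, so I would decompose $1_D=\sum_j\phi_j$ with $\phi_j$ the spectral projection onto $C_j$. The intersections $C_i\cap C_j$ are finite torsion sets, and I would split them off as a periodic component. Each $\phi_j$ is then invariant under $m\Lambda_j$ up to the torsion twist. Consequently $a$ is a.s.\ a finite sum of functions, each periodic under a rank-$(d-1)$ lattice. This gives weak periodicity of a typical $a\in\operatorname{supp}\mu$.

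\textbf{Step 2: conditional densities.} For each direction $j$, consider the quotient $\mathbb Z^d/m\Lambda_j$, which is $\mathbb Z$ times a finite group. Along it, $\phi_j$ becomes a bounded function of one integer variable whose spectral measure is continuous on a circle. I would then compute the conditional density of $A$ on each coset of $m\Lambda_j$, i.e.\ the average of $a$ over the coset. Ergodicity makes this almost surely constant on the orbit structure, and the tiling equation $1_F*a=1$ restricted to these averages forces each conditional density to lie in a small set. The expected dichotomy is that either every coset density equals $1/|F|$ uniformly, or some components have density $0$ or $1/2$ relative to an index-two sublattice. In the first case the non-torsion part of each $\phi_j$ has zero mean on cosets. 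Combined with the tiling identity, this should show that $a$ agrees on a finite-index sublattice with a translate-invariant pattern, which gives a periodic complement directly.

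\textbf{Step 3: the exceptional half-density components (main obstacle).} Here the fibers over $\mathbb Z^d/m\Lambda_j$ look like two interleaved one-dimensional configurations. The free choice along the $\mathbb Z$ direction is exactly a tiling of $\mathbb Z$ (or of $\mathbb Z\times$ finite group) by a finite projected multiset. I would first try to use Newman's theorem, in its finite-abelian-extension form, to replace each such component by a periodic one with the same local constraints. Then I would check that the replacement preserves $1_F*a=1$, since the other components are already periodic and only interact through finitely many residue classes. Gluing these finitely many periodic replacements over a common period $M$ gives $B$ with $F\oplus B=\mathbb Z^d$ and $B+M\mathbb Z^d=B$. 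The delicate point is that replacing one component must not break compatibility with the components in other circle directions. I would handle this by doing the replacement simultaneously, as a finite constraint-satisfaction problem on $\mathbb Z\times(\text{finite})$, where periodicity of some solution follows from the pigeonhole (de Bruijn-graph) argument.
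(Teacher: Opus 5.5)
There is a genuine gap: your parallel-circle case is fine, but the general case breaks at the same points the paper's real work addresses.

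\textbf{Steps 1 and 2.} When all circles are parallel, $1_D$ is invariant under one rank-$(d-1)$ lattice and the tiling descends to $\mathbb Z\times K$, so that case is settled. In general, the inference at the end of Step~1 fails. The decomposition $1_D=f_0+\sum_j f_j$, with $f_j=P_{\Lambda_j}1_D$ and $f_0$ fully periodic, only writes the indicator of a typical complement as a sum of \emph{real-valued} functions with rank-$(d-1)$ periods. These summands take fractional values; on the critical components $f_j$ is uniformly distributed on $[0,1]$. Nothing so far makes $A$ a finite disjoint union of $(d-1)$-periodic sets. Step~2 is meant to close this, but ``the tiling equation forces each conditional density into a small set'' is not an argument, and the dichotomy you predict is not the one that holds.

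The missing idea is the integrality of $f$. Modulo one, $\overline f_j=-\overline f_0-\sum_{i\ne j}\overline f_i$, and each term on the right is annihilated by a difference operator in a direction transverse to $\Lambda_j$. This yields $(U_{h_j}-I)^s\overline f_j=0$ in $\mathbb R/\mathbb Z$. Along orbits, $\overline f_j$ is therefore a polynomial sequence mod one, and Weyl's theorem makes its law on every finite-index ergodic component either Dirac or Haar. Dirac laws give $(d-1)$-periodicity of $D\cap E$: either $f=f_j$ is $0$/$1$-valued there, or $f$ is constant. A Haar law, together with $0\le f_j\le1$ and the periodicity of $P_{a\Lambda_j}f-f_j$, forces $\mu_E(D)=1/2$ exactly.

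\textbf{Step 3.} This step fails as designed. The leftover components are precisely those on which $D\cap E$ is \emph{not} piecewise $(d-1)$-periodic. So there is no quotient $\mathbb Z\times K$ carrying the configuration, and no one-dimensional tiling to which Newman's theorem applies.

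The paper handles these components combinatorially. Restrict the partition $X_F=\bigsqcup_{u\in F}T_uD$ to a component $E$. Every exceptional translate $T_{-u}E$ contributes $\mu_E$-mass exactly $1/2$, so there are at most two. The case of exactly one is excluded by the relative-complement lemma. Hence, at exceptional sites, the tiling equation reads ``exactly one of two'': $a$ is a proper two-coloring of a $\Gamma$-periodic graph, and Lemma~\ref{lem:color} replaces it by a periodic coloring.

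\textbf{The gluing step.} Your gluing also overreaches. After replacement, the ordinary pieces are only periodic under rank-$(d-1)$ lattices that vary from piece to piece. A common $M$ therefore does not make $B$ fully periodic, and the de~Bruijn pigeonhole argument on $\mathbb Z\times K$ needs a single common period lattice. Upgrading a piecewise $(d-1)$-periodic co-tile to a fully periodic one is a separate theorem; the paper cites Meyerovitch--Sanadhya--Solomon~\cite[Theorem~1.3]{MSS} for it.
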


\end{samepage}

We prove Theorem~\ref{thm:circle} by extending the conditional-density
arguments of Bhattacharya \mbox{\cite[Theorem~4.4]{Bhattacharya}}
and Khetan \cite[Appendix~C]{Khetan}. After passing to a suitable
finite-index
subgroup, all measurable invariance is understood modulo null sets.
Call an ergodic component $E$ ordinary when $D\cap E$ is piecewise
$(d-1)$-periodic and exceptional otherwise. The conditional-density
dichotomy implies that every exceptional component has density $1/2$;
an ordinary component may also have that density. The tiling partition
ensures that each component has either zero or two exceptional
translates. A periodic two-coloring replaces every such pair
simultaneously and produces a piecewise $(d-1)$-periodic tiling
complement.

Finally, the periodicization theorem of Meyerovitch, Sanadhya, and
Solomon \cite[Theorem~1.3]{MSS} converts such a piecewise
$(d-1)$-periodic co-tile into a fully periodic one.

The rest of this paper is organized as follows. 
Section~\ref{sec:algebra} develops the common algebraic, dynamical,
and spectral preliminaries. Section~\ref{sec:split} proves the
periodic coloring lemma. Section~\ref{sec:twice-prime} treats the
$2q$ family, followed by the four-point endpoint in
Subsection~\ref{sec:four}. Section~\ref{sec:circles} proves
Theorem~\ref{thm:circle}, and Section~\ref{sec:nine} applies it to
nine-point tiles. Finally, Section~\ref{sec:effective} derives period
reduction and decidability consequences for both cardinality
families.

\section{Algebraic and spectral preliminaries}\label{sec:algebra}

We first reduce to the lattice generated by the tile. We then collect
the spectral identities and dilation lemmas used in both cardinality
arguments.

\subsection{The intrinsic lattice and periodicity}

All groups below are written additively. Let $R$ be a commutative ring.
For a finitely supported function $u\colon G\to R$ on an abelian group $G$
and any function $v\colon G\to R$, set, for $x\in G$,
\[
 (u*v)(x)=\sum_{y\in G}u(y)v(x-y).
\]
For indicator functions and tiling equations we take $R=\mathbb Z$:
for a subset $S\subseteq G$, let $1_S\colon G\to\mathbb Z$ be its indicator,
and let $1\colon G\to\mathbb Z$ be the constant function with value one. If
$F\subseteq G$ is finite, then
\[
 (1_F*1_A)(x)=\bigl|\{(f,a)\in F\times A:x=f+a\}\bigr|.
\]
Thus $F\oplus A=G$ is equivalent to $1_F*1_A=1$. A
\emph{lattice complement} for $F$ in a free abelian group $G$ is a
subgroup $L\leq G$ such that every $g\in G$ has a unique expression
$g=f+\ell$ with $f\in F$ and $\ell\in L$; we write this as
$F\oplus L=G$.

\begin{lemma}[Index of a lattice complement]\label{lem:lattice-index}
If $L$ is a lattice complement for a finite set $F\subseteq G$, then
$L$ has finite index in $G$ and
\[
 [G:L]=|F|.
\]
\end{lemma}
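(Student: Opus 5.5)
The plan is to show that $F$ is a complete system of coset representatives for $G/L$. Once that is established, both claims follow at once: the cosets of $L$ are exactly the classes $f+L$ with $f\in F$, so there are finitely many of them and their number is $|F|$.

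First, surjectivity onto the quotient. Let $\pi\colon G\to G/L$ be the quotient map. By the defining property of a lattice complement, every $g\in G$ can be written as $g=f+\ell$ with $f\in F$ and $\ell\in L$. Hence $\pi(g)=\pi(f)$, and the restriction $\pi|_F\colon F\to G/L$ is surjective.

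Second, injectivity. Suppose $f,f'\in F$ satisfy $\pi(f)=\pi(f')$. Then $f'=f+\ell$ with $\ell=f'-f\in L$. The element $g:=f'$ now has the two expressions
\[
 f'+0=f+\ell,
\]
where both $0$ and $\ell$ lie in $L$ because $L$ is a subgroup. Uniqueness of the expression forces $f=f'$ and $\ell=0$, so $\pi|_F$ is injective.

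Therefore $\pi|_F$ is a bijection $F\to G/L$. It follows that $G/L$ is finite and $[G:L]=|G/L|=|F|$.

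I do not expect any real obstacle. The one point that needs care is the injectivity step, which relies on $0\in L$: the uniqueness clause is applied to the element $f'$ itself, written once as $f'+0$ and once as $f+\ell$. Note also that the argument never uses that $G$ is free, so the lemma holds in any abelian group.
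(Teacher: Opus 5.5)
Your proof is correct and is essentially the paper's argument. Both show that the quotient map restricted to $F$ is a bijection onto $G/L$, with surjectivity coming from existence and injectivity from applying uniqueness to the two representations $f'+0=f+(f'-f)$.
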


\begin{proof}
Let $\pi\colon G\to G/L$ be the quotient map. Existence of the representations
in $F\oplus L=G$ makes $\pi|_F$ surjective. If
$\pi(f)=\pi(f')$, then
\[
 f=f+0=f'+(f-f')
\]
are two $F+L$ representations of $f$. Uniqueness gives $f=f'$.
Hence $\pi|_F$ is a bijection onto $G/L$, proving the claim.
\end{proof}

When $F$ is a finite multiset, $1_F(g)$ records the multiplicity of
$g$. For a positive integer $s$, the dilated multiset $sF$ is the
pushforward under $f\mapsto sf$; thus collisions are retained and
\[
 1_{sF}(g)=\sum_{\substack{f\in G\\sf=g}}1_F(f).
\]
This convention is useful in Lemma~\ref{lem:dilation} even though the
main theorem concerns sets in a torsion-free group.

For $G\cong\mathbb Z^r$ and $0\leq k\leq r$, a set $A\subseteq G$ is
\emph{$k$-periodic} if $A+H=A$ for some rank-$k$ subgroup $H\leq G$,
and \emph{piecewise $k$-periodic} if it is a finite nonempty disjoint
union of $k$-periodic sets. When $k=r$, the set is also called
\emph{fully periodic}. The period groups may differ between pieces.
For a probability-preserving action $T\colon G\curvearrowright(X,\mu)$, a
measurable set $E$ is $k$-periodic if
\[
 \mu(T_hE\mathbin{\triangle}E)=0\qquad(h\in H)
\]
for some rank-$k$ subgroup $H$, and the piecewise notion is defined by
a finite measurable disjoint union. Thus measurable invariance is
always understood modulo null sets.

For a nonempty finite $F\subseteq\mathbb Z^d$, its \emph{intrinsic
lattice} is
\[
 \langle F-F\rangle,
 \qquad F-F=\{f-g:f,g\in F\}.
\]
The following makes the usual normalization and restriction to this
lattice explicit.

\begin{lemma}[Intrinsic-lattice reduction]\label{lem:intrinsic-reduction}
Suppose that $F\oplus A=\mathbb Z^d$. Choose $f_*\in F$ and set
\[
 F'=F-f_*,\qquad A'=A+f_*,\qquad L=\langle F'\rangle.
\]
Then $0\in F'$, $F'\oplus A'=\mathbb Z^d$,
\[
 L=\langle F'-F'\rangle=\langle F-F\rangle,
\]
and
\[
 F'\oplus(A'\cap L)=L.
\]
\end{lemma}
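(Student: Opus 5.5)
The plan is to verify the four assertions in turn; each is a direct check from the definitions.

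First, $0=f_*-f_*\in F'$. For the tiling of $F'$ and $A'$, note that $F'+A'=(F-f_*)+(A+f_*)$ and $f'+a'=(f-f_*)+(a+f_*)=f+a$. The map $(f,a)\mapsto(f-f_*,a+f_*)$ is a bijection $F\times A\to F'\times A'$ that preserves sums. Hence the number of representations of each $x$ is unchanged, $1_{F'}*1_{A'}=1_F*1_A=1$, and so $F'\oplus A'=\mathbb Z^d$.

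Next, the lattice identities. Translation does not change differences: $F'-F'=F-F$, so $\langle F'-F'\rangle=\langle F-F\rangle$. Since $0\in F'$, every $f'\in F'$ equals $f'-0\in F'-F'$, giving $\langle F'\rangle\subseteq\langle F'-F'\rangle$. Conversely, $F'-F'\subseteq\langle F'\rangle$ because $\langle F'\rangle$ is a group. Therefore $L=\langle F'\rangle=\langle F'-F'\rangle=\langle F-F\rangle$.

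Finally, the restriction $F'\oplus(A'\cap L)=L$.
\begin{itemize}[leftmargin=*]
\item \emph{Containment.} Since $F'\subseteq L$ and $A'\cap L\subseteq L$, the sumset lies in $L$.
\item \emph{Existence.} For $x\in L$, the global tiling gives $x=f'+a'$ with $f'\in F'$ and $a'\in A'$. Then $a'=x-f'\in L$, because both $x$ and $f'$ lie in the group $L$. So $a'\in A'\cap L$.
\item \emph{Uniqueness.} Any two representations of $x$ with summands in $F'\times(A'\cap L)$ are in particular representations in $F'\times A'$. Uniqueness in $F'\oplus A'=\mathbb Z^d$ forces them to coincide.
\end{itemize}

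There is no real obstacle here. The only point needing care is the existence step: it uses $F'\subseteq L$, which holds precisely because we translated so that $0\in F'$. This normalization is what makes the restriction to the intrinsic lattice work. Without it one would have to work with a coset of $L$ rather than $L$ itself.
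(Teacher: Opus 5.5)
Your proof is correct and follows the paper's argument essentially step for step: the sum-preserving bijection $F\times A\to F'\times A'$, then $F'-F'=F-F$ together with $0\in F'$ for the lattice identities, then $a'=x-f'\in L$ with uniqueness inherited from the ambient tiling. One small correction to your closing remark: $F'\subseteq L=\langle F'\rangle$ holds trivially, so the normalization $0\in F'$ is not what the existence step needs; what it actually gives is the equality $\langle F'\rangle=\langle F-F\rangle$, i.e.\ that the translated tile lies in the intrinsic lattice rather than in a coset of it.
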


\begin{proof}
The map
\[
 F\times A\longrightarrow F'\times A',\qquad
 (f,a)\longmapsto(f-f_*,a+f_*)
\]
is a bijection that preserves the sum of the coordinates, so it
preserves existence and uniqueness of representations. Since
$f_*\in F$, we have $0\in F'$ and $F'-F'=F-F$. The inclusion
$0\in F'$ also gives
\[
 \langle F'\rangle=\langle F'-F'\rangle=\langle F-F\rangle.
\]
Finally, if $x\in L$ and $x=f'+a'$ is its unique representation in
the translated ambient tiling, then $a'=x-f'\in L$. Thus
$a'\in A'\cap L$, and uniqueness is inherited from the ambient
tiling.
\end{proof}

The next lemma allows a periodic complement constructed in the
intrinsic lattice to be extended to the original ambient lattice.

\begin{lemma}[Extension from a subgroup]\label{lem:extend}
Let $G\leq\mathbb Z^d$ and let $F\subseteq G$ be finite. Suppose that
$F\oplus B=G$ for some $B\subseteq G$, and that there is a subgroup
$P\leq G$ such that $[G:P]<\infty$ and $B+P=B$. Then there are
$A\subseteq\mathbb Z^d$ and a finite-index subgroup
$\Gamma\leq\mathbb Z^d$ such that
\[
 F\oplus A=\mathbb Z^d,
 \qquad A+\Gamma=A.
\]
\end{lemma}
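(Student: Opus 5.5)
We extend the complement coset by coset. Choose a set $C\subseteq\mathbb Z^d$ of representatives of the cosets of $G$ in $\mathbb Z^d$, and put
\[
 A:=B+C=\bigsqcup_{c\in C}(B+c).
\]
Since $F\subseteq G$, each coset $G+c$ satisfies $F\oplus(B+c)=G+c$, because translation by $c$ preserves existence and uniqueness of representations. Every $x\in\mathbb Z^d$ lies in exactly one coset $G+c$. Any representation $x=f+b+c'$ with $f\in F$, $b\in B$, $c'\in C$ forces $x-c'\in G$, hence $c'=c$. So representations in $F+A$ are exactly representations in $F\oplus(B+c)=G+c$, which exist and are unique. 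Thus $F\oplus A=\mathbb Z^d$.

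The real task is to choose $C$ so that $A$ is invariant under a finite-index subgroup $\Gamma$ of $\mathbb Z^d$. The subgroup $P$ has finite index only in $G$, and $G$ may have rank smaller than $d$. So we need a complementary direction. Since $P$ has finite index in $G$, the saturation $\bar G=\{x\in\mathbb Z^d: mx\in G\text{ for some }m\geq1\}$ is a primitive subgroup of $\mathbb Z^d$. Hence there is a subgroup $K$ with $\mathbb Z^d=\bar G\oplus K$, and $K\cap G=0$. Set $\Gamma:=P\oplus K$.

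Next we check that $\Gamma$ has finite index. $G$ has finite index in $\bar G$, since both have the same rank and $\bar G/G$ is finitely generated torsion. Also $P$ has finite index in $G$. So $P\oplus K$ has finite index in $\bar G\oplus K=\mathbb Z^d$.

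Now choose $C$ compatibly. Take $C=C_0+K$, where $C_0\subseteq\bar G$ is a set of representatives of $\bar G/G$. This is a transversal of $G$ in $\mathbb Z^d$: every $x=g'+k$ with $g'\in\bar G$ and $k\in K$ is uniquely $g+c_0+k$. For uniqueness, note that if $c_0+k-c_0'-k'\in G\subseteq\bar G$, then $k-k'\in\bar G\cap K=0$, and hence $c_0=c_0'$. Then $A=B+C_0+K$ satisfies $A+K=A$, and $A+P=A$ because $B+P=B$. So $A+\Gamma=A$.

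The only step needing care is the existence of the direct complement $K$. This follows from $\mathbb Z^d/\bar G$ being torsion-free, hence free, so the quotient splits. That step is the main point, and it is standard.
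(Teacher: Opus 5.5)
Your proof is correct, but it takes a different route from the paper's. You obtain periodicity in the directions transverse to $G$ by splitting $\mathbb Z^d=\bar G\oplus K$, which holds because $\mathbb Z^d/\bar G$ is finitely generated and torsion-free, hence free. You then choose the $K$-invariant transversal $C_0+K$, so $A=B+C_0+K$ is visibly invariant under $P\oplus K$, and the tiling property reduces to the coset-by-coset check. All your verifications go through: the sum $P\oplus K$ is direct, $[\mathbb Z^d:P\oplus K]=[\bar G:G]\,[G:P]<\infty$, and $C_0+K$ is a transversal of $G$.

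The paper never chooses a complementary summand. It uses only that $G^{\mathrm{sat}}/G$ and $G/P$ are finite: it picks annihilating exponents $e,t$, sets $M=et$, and shows $G\cap M\mathbb Z^d\subseteq P$. It then works in the finite group $\mathbb Z^d/M\mathbb Z^d$. There it must first check, using the tiling equation, that reduction modulo $M$ is injective on $F$. Only then does it tile the quotient by translates of $\pi(B)$ over coset representatives of $\pi(G)$ and pull back.

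Your version is shorter, keeps the period $P$ exactly in the $G$-directions, and needs no collision check. The paper's version directly produces an explicit scalar period $M\mathbb Z^d$ and presents the complement as a tiling of a finite quotient. That matches the form of Theorem~\ref{thm:main} and the finite-quotient arguments in the period-reduction and decidability section. Your conclusion gives the same thing there, since any finite-index $\Gamma$ contains some $M\mathbb Z^d$.

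One small wording point: $\bar G$ is primitive for every subgroup $G\leq\mathbb Z^d$, independently of $P$. The hypothesis $[G:P]<\infty$ is used only to get $[\mathbb Z^d:\Gamma]<\infty$.
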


\begin{proof}
Let $b=1_B$. The tiling hypothesis says that
\[
 \sum_{f\in F}b(y-f)=1\qquad(y\in G).
\]
Put
$G^{\mathrm{sat}}=(\operatorname{span}_{\mathbb Q}G)\cap\mathbb Z^d$.
The groups $G^{\mathrm{sat}}/G$ and $G/P$ are finite. Choose positive
integers $e,t$ annihilating them, respectively, so that
$eG^{\mathrm{sat}}\subseteq G$ and $tG\subseteq P$, and set $M=et$.
We have
\begin{equation}\label{eq:ambient-period}
 N:=G\cap M\mathbb Z^d\subseteq P.
\end{equation}
Indeed, if $x=Mz\in G$ with $z\in\mathbb Z^d$, then
$z=x/M\in G^{\mathrm{sat}}$, so $x=t(ez)\in tG\subseteq P$.

Let $\pi\colon\mathbb Z^d\to Q=\mathbb Z^d/M\mathbb Z^d$ and
$H_0=\pi(G)$. Since $N\subseteq P$ and $B+P=B$, the function $b$ is
$N$-periodic and descends to the indicator of $\pi(B)$ on
$H_0\cong G/N$. The map $\pi$ is injective on $F$. Indeed, if
$f_1\ne f_2$ and $f_1-f_2\in N$, choose $x\in B$. At
$y=x+f_1$, the two distinct summands
\[
 b(y-f_1)=b(x)=1,
 \qquad b(y-f_2)=b(x+f_1-f_2)=1
\]
contradict the tiling equation, because $f_1-f_2\in P$.
Consequently the descended equation is the genuine set tiling
\[
 \pi(F)\oplus\pi(B)=H_0.
\]

Choose a set $R\subseteq Q$ of representatives for $Q/H_0$. The
tilings of the different cosets are disjoint, giving
$\pi(F)\oplus(\pi(B)+R)=Q$. Put $C=\pi(B)+R$ and
$A=\pi^{-1}(C)$. For every $x\in\mathbb Z^d$,
\[
 \sum_{f\in F}1_A(x-f)
 =\sum_{f\in F}1_C(\pi(x)-\pi(f))
 =\sum_{u\in\pi(F)}1_C(\pi(x)-u)=1.
\]
Thus $F\oplus A=\mathbb Z^d$. Finally,
$A+M\mathbb Z^d=A$, and $M\mathbb Z^d$ has finite index in
$\mathbb Z^d$.
\end{proof}

\subsection{Tiling systems and spectral measures}\label{subsec:systems}

In the spectral arguments, $G\cong\mathbb Z^r$, with $r\geq0$, is a
finite-rank free abelian group endowed with the discrete topology, and
$\varnothing\ne F\subseteq G$ is finite. The dual group is
\[
 \widehat G=\operatorname{Hom}\bigl(G,\{z\in\mathbb C:|z|=1\}\bigr),
\]
with pointwise multiplication and the compact-open topology,
equivalently the subspace topology inherited from the product of unit
circles. Its identity is the trivial character $\mathbf1$. A character
is \emph{torsion} if it has finite order. For a subgroup $H\leq G$,
its annihilator is
\[
 H^\perp=\{\theta\in\widehat G:\theta(h)=1
                    \text{ for every }h\in H\}.
\]

The tiling space
\[
 X_F=\{a\in\{0,1\}^G:1_F*a=1\}
\]
is a closed subset of the compact product $\{0,1\}^G$, because each
tiling equation involves only finitely many coordinates. We use the
shift
\[
 (T_ga)(x)=a(x+g)
\]
and, for every invariant Borel probability measure $\mu$ on $X_F$,
the unitary operators
\[
 U_g\phi=\phi\circ T_{-g}.
\]

If $a_0\in X_F$, identify $G$ with $\mathbb Z^r$, put
$Q_N=[-N,N]^r\cap\mathbb Z^r$, and set
\[
 \nu_N=|Q_N|^{-1}\sum_{x\in Q_N}\delta_{T_xa_0}.
\]
Since $X_F$ is compact metrizable, the space of Borel probability
measures on it is weakly compact, so $(\nu_N)$ has a weakly convergent
subsequence. For $\phi\in C(X_F)$ and fixed
$g=(g_1,\ldots,g_r)\in G$,
\[
 \left|\int\phi\circ T_g\,d\nu_N-\int\phi\,d\nu_N\right|
 \leq\|\phi\|_\infty
       \frac{|(Q_N+g)\mathbin{\triangle}Q_N|}{|Q_N|}
 \leq\|\phi\|_\infty
       \frac{2\sum_{i=1}^r|g_i|}{2N+1}
 \longrightarrow0
\]
for all sufficiently large $N$. Every weak subsequential limit is
therefore invariant. Applied to any such limit, the standard
ergodic-decomposition theorem for this countable action
\cite[Theorem~3.1]{Chen} yields an ergodic invariant Borel probability
measure on $X_F$.

The following definitions and coordinate identities apply in every
rank, including $r=0$. Define the origin-coordinate event and observable by
\[
 D=\{a\in X_F:a(0)=1\},
 \qquad f(a)=a(0)=1_D(a).
\]

\begin{lemma}[Coordinate identities]\label{lem:coordinate-identities}
For every invariant Borel probability measure $\mu$ on $X_F$, one has
\[
 \sum_{u\in F}U_uf=1\qquad\text{pointwise on }X_F,
\]
\[
 X_F=\bigsqcup_{u\in F}T_uD,
\]
and
\[
 \int_{X_F}f\,d\mu=\frac1{|F|}.
\]
\end{lemma}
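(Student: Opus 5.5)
The plan is to compute directly from the definitions of $U_g$, $T_g$ and $X_F$; none of the spectral machinery is needed.

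\emph{First identity.} For $a\in X_F$ and $u\in F$,
\[
 (U_uf)(a)=f(T_{-u}a)=(T_{-u}a)(0)=a(-u).
\]
Hence $\sum_{u\in F}(U_uf)(a)=\sum_{u\in F}a(0-u)=(1_F*a)(0)=1$, because $a$ satisfies the tiling equation at $x=0$. This holds for every $a$ and every $\mu$, since no measure enters.

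\emph{Partition.} Fix $u\in F$. A point $a$ lies in $T_uD$ exactly when $T_{-u}a\in D$, that is, when $a(-u)=1$. So $1_{T_uD}=U_uf$ pointwise. Since each $U_uf$ is $\{0,1\}$-valued and their sum is identically $1$, exactly one of the sets $T_uD$ contains a given $a$. Thus the union is all of $X_F$ and is disjoint. I should check that $u\mapsto T_u$ is injective on $F$ as a family of sets so that the disjoint union is indexed correctly. This is automatic: the condition $a(-u)=1$ singles out a unique $u$ for each $a$, and $X_F\ne\varnothing$ whenever $\mu$ exists.

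\emph{Density.} Integrate the first identity against $\mu$. Invariance gives $\mu(T_uD)=\mu(D)$, equivalently $\int U_uf\,d\mu=\int f\,d\mu$. Therefore $|F|\int f\,d\mu=\mu(X_F)=1$, and $\int f\,d\mu=1/|F|$.

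The only point needing care is keeping the sign conventions consistent between $U_g\phi=\phi\circ T_{-g}$ and $T_uD=\{T_ua:a\in D\}$. There is no real obstacle.
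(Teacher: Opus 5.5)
Your proposal is correct and follows the paper's proof essentially step for step: you compute $(U_uf)(a)=a(-u)$, read off the first identity from the tiling equation at the origin, characterize $T_uD$ by $a(-u)=1$ to get the partition, and integrate using invariance to get the density $1/|F|$. Your extra remark about the indexing of the disjoint union is harmless but not needed.
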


\begin{proof}
For $a\in X_F$ and $u\in F$,
\[
 (U_uf)(a)=f(T_{-u}a)=a(-u).
\]
The tiling equation at the origin gives
$\sum_{u\in F}a(-u)=(1_F*a)(0)=1$, proving the first identity. Also,
\[
 a\in T_uD\quad\Longleftrightarrow\quad
 T_{-u}a\in D\quad\Longleftrightarrow\quad a(-u)=1.
\]
The binary values $a(-u)$ sum to one, so exactly one of these events
occurs. Finally, invariance gives
$\int U_uf\,d\mu=\int f\,d\mu$ for every $u$; integrating the first
identity yields the asserted mean.
\end{proof}

More generally, let $T\colon G\curvearrowright(X,\mathcal B,\mu)$ be a
measurable probability-preserving action on a standard probability
space, and write $U_g\phi=\phi\circ T_{-g}$. We use the complex Hilbert
space $L^2(X,\mu)$ with inner product conjugate-linear in the first
variable:
\[
 \langle\phi,\psi\rangle=\int_X\overline\phi\,\psi\,d\mu.
\]
The spectral theorem assigns to each $v\in L^2(X,\mu)$ a finite
positive measure $\sigma_v$ on $\widehat G$, characterized by
\[
 \langle v,U_gv\rangle
   =\int_{\widehat G}\theta(g)\,d\sigma_v(\theta).
\]
When $r=0$, so that $G=\{0\}$ and $\widehat G=\{\mathbf1\}$, this means
explicitly
\[
 \sigma_v=\|v\|_2^2\delta_{\mathbf1}.
\]
When $r\geq1$, the usual spectral theorem for unitary representations
of locally compact second-countable abelian groups applies: $G$ is
discrete and countable, and $L^2(X,\mu)$ is separable because the
probability space is standard. Thus, in every rank, for any finitely
supported complex coefficients $\alpha_g$,
\begin{equation}\label{eq:spectral-norm}
 \left\|\sum_g\alpha_gU_gv\right\|_2^2
 =\int_{\widehat G}\left|\sum_g\alpha_g\theta(g)\right|^2
                  d\sigma_v(\theta).
\end{equation}
If $w=\sum_g\alpha_gU_gv$, then
\[
 d\sigma_w=\left|\sum_g\alpha_g\theta(g)\right|^2d\sigma_v.
\]
These identities can also be formulated through positive-definite
functions; see~\cite{Rudin}.

For $L\leq G$, let
\[
 \mathcal I_L=
 \{E:\mu(T_\ell^{-1}E\mathbin{\triangle}E)=0
                         \text{ for every }\ell\in L\}
\]
be the invariant $\sigma$-algebra, and put
\[
 P_Lv=\mathbb E(v\mid\mathcal I_L).
\]
Conditional expectation is the orthogonal projection onto
\[
 L^2(X,\mathcal I_L,\mu)
 =\{h\in L^2(X,\mu):U_\ell h=h
                         \text{ for every }\ell\in L\}.
\]
All identities here are modulo null sets.

For $v\in L^2(X,\mu)$, write
\[
 \mathcal H_v=\overline{\operatorname{span}}
                 \{U_gv:g\in G\}.
\]
For a finite positive measure $\nu$ on $\widehat G$, let
$(M_g\psi)(\theta)=\theta(g)\psi(\theta)$. A \emph{cyclic spectral
multiplier model} for $(U,v)$ is a unitary
$W_v\colon\mathcal H_v\to L^2(\widehat G,\nu)$ such that
$W_vv=1$ and $W_vU_gW_v^{-1}=M_g$.

\begin{proposition}[Invariant projection in a cyclic model]
\label{prop:invariant-projection}
There is a cyclic spectral multiplier model $(\sigma_v,W_v)$ for
$(U,v)$. If $w\in\mathcal H_v$ and $m=W_vw$, then
\[
 d\sigma_w=|m|^2\,d\sigma_v.
\]
For every subgroup $L\leq G$,
\[
 P_L\mathcal H_v\subseteq\mathcal H_v,
 \qquad W_v(P_Lw)=1_{L^\perp}m,
\]
and consequently
\[
 d\sigma_{P_Lw}=1_{L^\perp}\,d\sigma_w.
\]
In particular,
\[
 W_v(P_Lv)=1_{L^\perp},
 \qquad d\sigma_{P_Lv}=1_{L^\perp}\,d\sigma_v.
\]
\end{proposition}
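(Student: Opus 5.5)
I will build the model directly from the spectral theorem. By \eqref{eq:spectral-norm}, the map sending $\sum_g\alpha_gU_gv$ to the trigonometric polynomial $\sum_g\alpha_g\theta(g)$ is well defined and isometric from $\operatorname{span}\{U_gv\}$ into $L^2(\widehat G,\sigma_v)$. If two coefficient families give the same vector, the norm of their difference vanishes, so their images agree $\sigma_v$-almost everywhere. The map extends by continuity to $\mathcal H_v$; call the extension $W_v$. Its range is closed and contains the characters evaluated at points of $G$. By Stone--Weierstrass, trigonometric polynomials are uniformly dense in $C(\widehat G)$, and $C(\widehat G)$ is dense in $L^2(\sigma_v)$ because $\widehat G$ is compact metrizable and $\sigma_v$ is finite. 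Hence $W_v$ is unitary. On polynomials one checks $W_vv=1$ and $W_vU_gW_v^{-1}=M_g$, and both identities persist by continuity. When $r=0$ everything is trivial, with $\sigma_v=\|v\|^2\delta_{\mathbf1}$. For $w\in\mathcal H_v$ with $m=W_vw$, unitarity and the intertwining relation give
\[
\langle w,U_gw\rangle=\langle m,M_gm\rangle=\int\theta(g)|m|^2\,d\sigma_v .
\]
Uniqueness of the spectral measure, i.e.\ Fourier uniqueness for finite measures on $\widehat G$, then yields $d\sigma_w=|m|^2\,d\sigma_v$.

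For the projection statement I will avoid working with $P_L$ directly. Instead I use the mean ergodic theorem for the amenable group $L$. Choose a Følner sequence $\Phi_N\subseteq L$, for instance boxes in a basis of $L$, and set $A_N=|\Phi_N|^{-1}\sum_{\ell\in\Phi_N}U_\ell$. Von Neumann's mean ergodic theorem for the unitary representation of $L$ gives $A_Nw\to P_Lw$ in $L^2(X,\mu)$. Here $P_L$ is the projection onto the $L$-invariant vectors, and by the stated identification this projection is the conditional expectation. Each $A_Nw$ lies in $\mathcal H_v$ since that space is closed and $U$-invariant, and therefore $P_Lw\in\mathcal H_v$, proving $P_L\mathcal H_v\subseteq\mathcal H_v$.

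On the model side, $W_vA_Nw=k_N m$, where
\[
k_N(\theta)=|\Phi_N|^{-1}\sum_{\ell\in\Phi_N}\theta(\ell).
\]
We have $|k_N|\le1$. For $\theta\in L^\perp$, $k_N(\theta)=1$. For $\theta\notin L^\perp$, the restriction $\theta|_L$ is a nontrivial character, and the Følner property forces $k_N(\theta)\to0$. This follows by comparing $\Phi_N$ with $\Phi_N+\ell_0$ for some $\ell_0$ with $\theta(\ell_0)\ne1$, which gives $|1-\theta(\ell_0)|\,|k_N(\theta)|\le|\Phi_N\triangle(\Phi_N+\ell_0)|/|\Phi_N|$. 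Dominated convergence in $L^2(\sigma_v)$ then gives $k_Nm\to1_{L^\perp}m$. Since $W_v$ is unitary, $W_v(P_Lw)=1_{L^\perp}m$. Applying the previous paragraph to $P_Lw$ gives $d\sigma_{P_Lw}=|1_{L^\perp}m|^2d\sigma_v=1_{L^\perp}d\sigma_w$, and the case $w=v$, $m=1$ gives the last line of the statement.

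The main obstacle is the pointwise limit of $k_N$ off $L^\perp$. This is exactly where a Følner averaging is needed rather than an arbitrary exhaustion of $L$, together with the identification of the mean-ergodic limit with the conditional expectation modulo null sets.
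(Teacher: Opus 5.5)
Your proof is correct. The construction of $W_v$ and the density formula $d\sigma_w=|m|^2\,d\sigma_v$ follow the paper essentially verbatim. The difference is in the projection step.

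\textbf{The paper's route.} The paper argues statically, inside the multiplier model. A function $m\in L^2(\widehat G,\sigma_v)$ is fixed by every $M_\ell$, $\ell\in L$, exactly when $(\theta(\ell)-1)m(\theta)=0$ almost everywhere for each $\ell$. Since $L$ is countable and $\widehat G\setminus L^\perp=\bigcup_{\ell\in L}\{\theta:\theta(\ell)\ne1\}$, the fixed subspace is $L^2(L^\perp,\sigma_v)$. Its orthogonal projection is multiplication by $1_{L^\perp}$. The inclusion $P_L\mathcal H_v\subseteq\mathcal H_v$ then comes from $\mathcal H_v$ being a reducing subspace for every $U_g$. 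Hence the global fixed-space projection commutes with the projection onto $\mathcal H_v$.

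\textbf{Your route.} You realize $P_Lw$ as the $L^2$-limit of the F{\o}lner averages $A_Nw$. This gives $P_L\mathcal H_v\subseteq\mathcal H_v$ at once, because each $A_Nw$ lies in the closed invariant space $\mathcal H_v$. You then obtain the limiting multiplier $1_{L^\perp}$ as the pointwise limit of $k_N$, using the shift estimate and dominated convergence.

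\textbf{What each buys.} The paper's argument needs no amenability and no limit theorem, only the countability of $L$. The step you flag as the main obstacle, pointwise convergence of $k_N$ off $L^\perp$, never arises there. Your argument imports von Neumann's mean ergodic theorem for $L$. Your own computation nearly proves that theorem here: $A_Nw$ converges to $u=W_v^{-1}(1_{L^\perp}m)$, which is $L$-invariant, and $w-A_Nw$ is orthogonal to every $L$-invariant vector, so $u=P_Lw$. In exchange, you avoid the reducing-subspace and commuting-projection step. You also exhibit $P_Lw$ concretely as a limit of box averages, the same kind of averages the paper later treats with the pointwise ergodic theorem when it establishes the stable conditional laws.
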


\begin{proof}
On the algebraic cyclic span define
\[
 W_v^0\!\left(\sum_g\alpha_gU_gv\right)(\theta)
       =\sum_g\alpha_g\theta(g).
\]
Equation~\eqref{eq:spectral-norm} shows that $W_v^0$ is well defined
and isometric. Its range is the trigonometric polynomials on the
compact group $\widehat G$. These are uniformly dense in
$C(\widehat G)$ by the Stone--Weierstrass theorem, and continuous
functions are dense in $L^2(\widehat G,\sigma_v)$ by regularity of the
finite Borel measure. Hence $W_v^0$ extends to a unitary
\[
 W_v\colon\mathcal H_v\longrightarrow L^2(\widehat G,\sigma_v).
\]
Direct calculation gives $W_vv=1$ and $W_vU_g=M_gW_v$.

If $w\in\mathcal H_v$ and $m=W_vw$, then, for every $g\in G$,
\[
 \langle w,U_gw\rangle
 =\int_{\widehat G}\theta(g)|m(\theta)|^2\,d\sigma_v(\theta).
\]
Uniqueness of a finite measure from its character integrals gives
$d\sigma_w=|m|^2d\sigma_v$.

A multiplier vector $m$ is fixed by every $M_\ell$, $\ell\in L$,
exactly when
\[
 (\theta(\ell)-1)m(\theta)=0
 \quad\text{for almost every $\theta$ and every $\ell\in L$}.
\]
Since $L$ is countable and
\[
 \widehat G\setminus L^\perp
 =\bigcup_{\ell\in L}\{\theta:\theta(\ell)\ne1\},
\]
the fixed subspace is precisely $L^2(L^\perp,\sigma_v)$, whose
orthogonal projection is multiplication by $1_{L^\perp}$. Moreover,
$\mathcal H_v$ reduces every $U_g$, since it is invariant under both
$U_g$ and $U_g^*=U_{-g}$. The global fixed-space projection $P_L$
therefore preserves $\mathcal H_v$ and has the stated multiplier
form there. The remaining formulas follow from the spectral-density
identity just proved.
\end{proof}

\begin{lemma}[Bounds for invariant projections]\label{lem:invariant-bounds}
Let $L\leq G$. If $f\in L^2(X,\mu)$ is real-valued and
$0\leq f\leq1$ almost everywhere, then
\[
 0\leq P_Lf\leq1\qquad\text{almost everywhere}.
\]
\end{lemma}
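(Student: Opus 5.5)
The plan is to use the fact, recorded just before Proposition~\ref{prop:invariant-projection}, that $P_L$ is the conditional expectation $\mathbb E(\cdot\mid\mathcal I_L)$ onto the $\mathcal I_L$-measurable functions. Conditional expectation is positive and unital, so the bounds follow from monotonicity. The only point needing care is that $\mathcal I_L$ is defined through invariance modulo null sets. Because of this, we argue through the defining integral identity rather than through any pointwise construction.

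First, the lower bound. Set $h=P_Lf$; it is real-valued, since $f$ is real and conditional expectation commutes with complex conjugation. Let $E=\{h<0\}$. We claim $E\in\mathcal I_L$. Indeed, $h$ is $\mathcal I_L$-measurable, and equivalently $U_\ell h=h$ almost everywhere for every $\ell\in L$. Hence $T_\ell^{-1}E$ and $E$ differ by a null set, which is exactly membership in $\mathcal I_L$ as defined. The defining property of conditional expectation then gives
\[
 \int_E h\,d\mu=\int_E f\,d\mu\geq0 .
\]
Since $h<0$ on $E$, the left side is $\leq 0$, with equality only if $\mu(E)=0$. Therefore $h\geq0$ almost everywhere.

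Second, the upper bound. Apply the same argument to $1-f$, which also satisfies $0\leq 1-f\leq1$. The constant function $1$ is $L$-invariant, so $P_L1=1$ and $P_L(1-f)=1-h$. The first step then gives $1-h\geq0$ almost everywhere.

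If one prefers to stay with the orthogonal-projection description, there is an alternative for the upper bound. The set $\{h>1\}$ is again in $\mathcal I_L$, and the identity $\langle 1_E,f-h\rangle=0$, valid for every $L$-invariant indicator $1_E$, gives the same contradiction. The main (minor) obstacle is therefore only verifying that level sets of an a.e.-invariant function lie in $\mathcal I_L$ as defined, with invariance mod null sets. This is immediate because $U_\ell$ is composition with the measure-preserving map $T_{-\ell}$, and composition with a measure-preserving map carries null sets to null sets.
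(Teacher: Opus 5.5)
Your proof is correct and follows essentially the same route as the paper: both apply the defining identity $\int_E P_Lf\,d\mu=\int_E f\,d\mu$ to an $\mathcal I_L$-measurable level set of $P_Lf$, and both get a contradiction from a strict sign. The differences are cosmetic: the paper works directly with the sets $\{P_Lf\leq-1/n\}$ and $\{P_Lf\geq1+1/n\}$, whereas you use $\{P_Lf<0\}$ and get the upper bound by applying the lower bound to $1-f$ together with $P_L1=1$.
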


\begin{proof}
Put $h=P_Lf$. It has a real-valued $\mathcal I_L$-measurable
representative and satisfies
\[
 \int_Eh\,d\mu=\int_Ef\,d\mu\qquad(E\in\mathcal I_L).
\]
For $n\geq1$, let $A_n=\{h\leq-1/n\}$. If $\mu(A_n)>0$, then
\[
 0\leq\int_{A_n}f\,d\mu
 =\int_{A_n}h\,d\mu
 \leq-\frac{\mu(A_n)}n<0,
\]
a contradiction. Hence $h\geq0$ almost everywhere. Similarly, for
$B_n=\{h\geq1+1/n\}$, positive measure would imply
\[
 \left(1+\frac1n\right)\mu(B_n)
 \leq\int_{B_n}h\,d\mu
 =\int_{B_n}f\,d\mu
 \leq\mu(B_n),
\]
which is impossible. Thus $h\leq1$ almost everywhere.
\end{proof}

Two consequences will be used repeatedly. First, if
$\operatorname{supp}\sigma_v\subseteq S$ for a finite set of torsion
characters, put $\Gamma=\bigcap_{\theta\in S}\ker\theta$. If
$S\ne\varnothing$ and $N$ is the least common multiple of the orders
of its characters, then $NG\leq\Gamma$; hence $\Gamma$ has finite
index in $G$. For $\gamma\in\Gamma$,
\begin{equation}\label{eq:torsion-periods}
 \|(U_\gamma-I)v\|_2^2
 =\int_S|\theta(\gamma)-1|^2\,d\sigma_v(\theta)=0.
\end{equation}
Thus $v$ is $\Gamma$-invariant in $L^2$; for $S=\varnothing$, take
$\Gamma=G$.

Second, write $\mathbb T=\mathbb R/\mathbb Z$. Smith normal form gives
a basis $e_1,\ldots,e_r$ of $G$ and positive integers
$m_1\mid m_2\mid\cdots\mid m_k$ such that, for a rank-$k$ subgroup
$L\leq G$,
\[
 L=\bigoplus_{i=1}^k m_i\mathbb Ze_i.
\]
Indeed, take a basis of the free abelian group $L$ and form the
$r\times r$ integer matrix whose first $k$ columns are its coordinate
vectors in a fixed basis of $G$ and whose remaining columns are zero.
Applying Smith normal form~\cite[Theorem~21.4]{DasNg}, column operations
preserve the subgroup generated by the columns, while row operations
amount to changing the basis of $G$. In the corresponding dual
coordinates, the map
\[
 \Phi\colon\mathbb T^r\longrightarrow\widehat G,
 \qquad
 \Phi(x_1,\ldots,x_r)\!\left(\sum_{i=1}^ra_ie_i\right)
 =\exp\!\left(2\pi i\sum_{i=1}^ra_ix_i\right)
\]
is an isomorphism of topological groups, and
\begin{equation}\label{eq:annihilator-coordinates}
 L^\perp=
 \prod_{i=1}^k\bigl(m_i^{-1}\mathbb Z/\mathbb Z\bigr)
 \times\mathbb T^{r-k}.
\end{equation}
For $k=r$, factorization through $G/L$ identifies $L^\perp$ with the
character group of $G/L$; it is a finite torsion group of order
\[
 |L^\perp|=[G:L]=\prod_{i=1}^rm_i.
\]
For $k=r-1$, formula~\eqref{eq:annihilator-coordinates} is a finite
union of torsion translates of a one-dimensional rational subtorus,
hence a finite union of rational affine circles. When $r\geq2$, each
torsion point lies on a rational affine circle, namely a translate of
$\{0\}^{r-1}\times\mathbb T$ by that point in suitable coordinates.

\begin{lemma}\label{lem:components}
Let an abelian group $G$ act ergodically by probability-preserving
transformations on $(X,\mu)$, and let $\Gamma\leq G$ have finite
index. Up to null sets, $X$ is the disjoint union of finitely many
positive-measure $\Gamma$-invariant sets on each of which the
restricted $\Gamma$-action is ergodic. These components are permuted
transitively by $G$ and have equal measure.
\end{lemma}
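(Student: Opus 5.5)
The argument works with the $\Gamma$-invariant $\sigma$-algebra $\mathcal I_\Gamma$, which is finite modulo null sets. By ergodicity of the $G$-action, the components can be read off from its atoms, and $G$ permutes those atoms.

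First, since $\Gamma$ is normal (as $G$ is abelian) and $T_g$ commutes with every $T_\gamma$, each $T_g$ maps $\Gamma$-invariant sets to $\Gamma$-invariant sets. Moreover, $T_\gamma$ acts trivially on $\mathcal I_\Gamma$ modulo null sets when $\gamma\in\Gamma$. Hence the finite group $G/\Gamma$ acts on the measure algebra of $\mathcal I_\Gamma$.

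Second, show that $\mathcal I_\Gamma$ is purely atomic with finitely many atoms. Take any $E\in\mathcal I_\Gamma$ with $\mu(E)>0$, and choose coset representatives $g_1,\dots,g_n$ of $G/\Gamma$. The set $\bigcup_i T_{g_i}E$ is $G$-invariant modulo null sets, so by ergodicity it has full measure. Hence $\mu(E)\geq 1/n$. Any strictly decreasing chain of positive-measure invariant sets would produce differences of positive measure. Since each such set has measure at least $1/n$, there are at most $n$ disjoint ones. So $\mathcal I_\Gamma$ is generated modulo null sets by at most $n$ atoms $E_1,\dots,E_k$ partitioning $X$.

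To justify atomicity concretely, start from $X$ and repeatedly split any set that has a positive-measure invariant subset of strictly smaller measure. Each split produces disjoint invariant sets of measure at least $1/n$, so the process terminates after at most $n$ steps. Each resulting piece $E_j$ is then an atom: every $\Gamma$-invariant subset of it is null or conull in it. This is exactly ergodicity of the restricted $\Gamma$-action on $E_j$ with the normalized measure.

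Third, $G$ permutes the atoms, because each $T_g$ is a measure-algebra automorphism of $\mathcal I_\Gamma$ and so maps atoms to atoms. For transitivity, note that the union of the $G$-orbit of $E_1$ is $G$-invariant and has positive measure. By ergodicity it is conull, so every atom lies in the orbit of $E_1$. Finally, $T_g$ preserves $\mu$, so all atoms have equal measure, namely $1/k$.

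The main obstacle is bookkeeping modulo null sets. We must check that the $G$-translates of an invariant set are again in $\mathcal I_\Gamma$ under the paper's definition $\mu(T_\ell^{-1}E\triangle E)=0$. This follows from $T_\ell^{-1}T_gE=T_g T_\ell^{-1}E$ and measure preservation. Everything else is finite combinatorics.
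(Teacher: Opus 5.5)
Your proposal is correct and is essentially the paper's proof. The paper gets the key bound $\mu(E)\geq 1/[G:\Gamma]$ for positive-measure $\Gamma$-invariant sets from the $G$-invariant counting function $N_B=\sum_{r\in R}1_{T_rB}$, whose a.e.\ constant value is $[G:\Gamma]\mu(B)$, where you use conullity of the union of coset translates; it then splits into at most $[G:\Gamma]$ atoms and gets transitivity from ergodicity of the $G$-action, as you do. One small tidy-up: in the transitivity step, take the union over coset representatives (or over the finitely many distinct atoms in the orbit) rather than over all of $G$, so that measurability is automatic even when $G$ is uncountable.
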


\begin{proof}
Let $m=[G:\Gamma]$ and choose representatives $R$ for $G/\Gamma$,
with $0\in R$. If $B$ is $\Gamma$-invariant, then commutativity shows
that $G$ permutes the family $(T_rB)_{r\in R}$. Hence
\[
 N_B=\sum_{r\in R}1_{T_rB}
\]
is $G$-invariant and therefore constant almost everywhere. Since the
action preserves $\mu$, its value is
$\int N_B\,d\mu=m\mu(B)$. If $\mu(B)>0$, then $N_B\geq1$ on $B$,
so $\mu(B)\geq1/m$.

Consequently the $\Gamma$-invariant $\sigma$-algebra has at most $m$
pairwise disjoint positive-measure members. Starting with $X$, split
any invariant piece that is not an atom into two positive-measure
invariant pieces. The preceding bound forces this procedure to stop
after at most $m-1$ splittings. It produces a partition into at most
$m$ atoms of the invariant $\sigma$-algebra, and on each atom the
restricted $\Gamma$-action is ergodic.

For $g\in G$, commutativity makes $T_g$ carry each such atom to another
one. The translates of any fixed atom cover $X$ up to a null set:
indeed, applying the first paragraph to that atom makes the
corresponding $N_B$ a positive constant. Thus every component is a
translate of every other, proving transitivity. Measure preservation
then gives equal masses.
\end{proof}

For a positive-measure $H$-ergodic component $E$, write
\[
 \mu_E(B)=\frac{\mu(B\cap E)}{\mu(E)}.
\]
Commutativity and measure preservation show that $T_gE$ is again an
$H$-ergodic component, and consequently
\[
 \mu_{T_gE}(T_gB)=\mu_E(B)
\]
for every measurable $B$ and every $g\in G$.

\subsection{Coprime dilation and vanishing sums}

The following dilation identity converts the tiling equation into
simultaneous annihilation equations. We use the form in
\cite[Theorem~8]{HorakKim} and \cite[Theorem~1.2(i)]{GGR};
see also~\cite{GTStructure}.

\begin{lemma}[Coprime dilation]\label{lem:dilation}
Let $F$ be a nonempty finite multiset in an abelian group $G$, of total
cardinality $n$. If $a\colon G\to\{0,1\}$ satisfies $1_F*a=1$, then
$1_{sF}*a=1$ for every positive integer $s$ with $\gcd(s,n)=1$.
\end{lemma}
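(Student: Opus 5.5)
The plan is the standard reduction of the coprime dilation identity to the Frobenius congruence in a group ring, followed by a cancellation argument that exploits the fact that $a$ is $\{0,1\}$-valued. It suffices to treat $s=p$ prime with $p\nmid n$, since a general $s$ coprime to $n$ factors into such primes. Dilation by $p$ commutes with dilation by other integers, so the identity can then be iterated: if $1_{F'}*a=1$ for the multiset $F'=pF$, which still has total cardinality $n$, then also $1_{p'F'}*a=1$.

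\emph{Step 1: reduction to finite support and the congruence.} We work with the integer group ring $\mathbb Z[G]$ acting by convolution on functions $G\to\mathbb Z$. Write $\phi=1_F$ as the element $\sum_f [f]$. In $\mathbb Z[G]/p$, which is a commutative ring of characteristic $p$, we have
\[
 \phi^p\equiv \sum_f[pf]=1_{pF}\pmod p.
\]
From $\phi*a=1$ we get $\phi^p*a=\phi^{p-1}*1=n^{p-1}$, a constant. Hence
\[
 1_{pF}*a\equiv n^{p-1}\pmod p.
\]
By Fermat's little theorem, since $p\nmid n$, this constant is $\equiv 1\pmod p$. Convolution of a finitely supported element with an arbitrary function is well defined, so these manipulations are legitimate.

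\emph{Step 2: upgrading the congruence to equality.} Put $c=1_{pF}*a$. This is a nonnegative integer-valued function with $c\equiv1\pmod p$, so $c\geq1$ everywhere. We need an upper bound or an averaging argument.

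Averaging: $c$ has the same "density" as $1_F*a$. To avoid needing densities in an arbitrary group, use instead the identity $1_F*c=1_{pF}*(1_F*a)=1_{pF}*1=n$. So $\sum_{f\in F}c(x-f)=n$ for every $x$, with $n$ terms each at least $1$. Hence every term equals $1$, and since every point is of the form $x-f$, we get $c\equiv1$.

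This step is the only real obstacle, and the trick $1_F*c=n$, which uses commutativity and associativity of convolution with finitely supported factors, resolves it cleanly. No density or amenability is needed, and the multiset convention from the preceding discussion is what makes iteration over the prime factors of $s$ legitimate.
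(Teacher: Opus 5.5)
Your proposal is correct and follows the paper's proof essentially step for step. Both arguments reduce to a prime $p\nmid n$ and apply the Frobenius congruence in $(\mathbb Z/p\mathbb Z)[G]$ to get $1_{pF}*a\equiv n^{p-1}\equiv1\pmod p$. Both then use $1_F*(1_{pF}*a)=n$ to force every value to equal $1$, and iterate over the prime factors of $s$, keeping multisets. The only cosmetic difference is that you expand $\phi^p$ over the multiset with repetitions, whereas the paper invokes $m^p\equiv m\pmod p$ for multiplicities; both are valid.
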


\begin{proof}
It suffices first to consider a prime $\ell$ not dividing $n$.
The group ring over $\mathbb Z/\ell\mathbb Z$ is commutative, so
the Frobenius identity gives
$1_{\ell F}\equiv1_F^{*\ell}\pmod\ell$.
The congruence remains true with repeated points, since an integer
multiplicity $m$ satisfies $m^\ell\equiv m\pmod\ell$.
Consequently the nonnegative integer-valued function
$b=1_{\ell F}*a$ satisfies
\begin{equation}\label{eq:dilate-congruence}
 b\equiv1_F^{*\ell}*a
   =1_F^{*(\ell-1)}*1=n^{\ell-1}\equiv1\pmod\ell.
\end{equation}
Hence $b(x)\geq1$ for every $x$. Commuting the finite convolutions,
we also have $1_F*b=1_{\ell F}*(1_F*a)=1_{\ell F}*1=n$.
Each value of $1_F*b$ is a sum of $n$ integers at least one, counted
with multiplicity, so every summand equals one. Since $F$ is nonempty,
every value $b(x)$ occurs in such a sum. Therefore $b=1$.

For $s=1$, the conclusion is the hypothesis. For $s>1$, factor $s$
into primes and apply the prime step successively to the resulting
dilated multisets. Each has total cardinality $n$, and each prime
factor is coprime to $n$.

\end{proof}

For every positive integer $m$, let
$\mu_m=\{z\in\mathbb C:z^m=1\}$. A \emph{rotated $m$-cycle} is the
multiset $z\mu_m$ for some $|z|=1$, with each of its $m$ elements
occurring once.

We will also need the following special case of the two-prime
classification of vanishing sums. It follows from
\cite[Theorem~3.3]{LamLeung}, with antecedents in
\cite[\S3]{deBruijn}; the $2q$ case is also stated in
\cite[Lemma~2]{PoonenRubinstein}. We include an elementary proof.

\begin{samepage}
\begin{lemma}[Vanishing sums with two prime factors]\label{lem:roots}
Let $q$ be an odd prime. Every nonnegative integral vanishing sum
of elements of $\mu_{2q}$ is a disjoint multiset union of opposite
pairs and rotated $q$-cycles. The same assertion holds after rotating
all the terms by a common unit complex number.

If a collection of such rotated vanishing sums has total length $2q$,
then its terms admit a partition into either $q$ opposite pairs or
two rotated $q$-cycles.
\end{lemma}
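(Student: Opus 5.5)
The plan is to prove the first assertion through the cyclotomic structure of $\mathbb Z[\zeta_{2q}]$, and then deduce the rotated and partition statements from it. Write $\zeta=e^{\pi i/q}$, so that $\mu_{2q}=\{\zeta^k:0\le k<2q\}$. We have $-1=\zeta^q$, and $\mu_{2q}=\mu_q\cup(-\mu_q)$. A nonnegative integral vanishing sum is given by multiplicities $c_0,\dots,c_{2q-1}\in\mathbb Z_{\ge0}$ with $\sum_k c_k\zeta^k=0$. Replacing $\zeta^{k}$ by $-\zeta^{k-q}$ for $k\ge q$, this becomes $\sum_{j=0}^{q-1}(c_j-c_{j+q})\omega^j=0$ with $\omega=\zeta^2$ a primitive $q$-th root of unity (since $q$ is odd, $\zeta^{q+j}=-\zeta^{j}$, and $\{\zeta^j\}_{j<q}$ corresponds to $\mu_q$ via $\zeta^j=\pm\omega^{j'}$). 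To keep the bookkeeping clean, I would instead index by $\mu_q$ directly. Write each element of $\mu_{2q}$ uniquely as $\varepsilon\eta$ with $\varepsilon=\pm1$ and $\eta\in\mu_q$, and let $m_\pm(\eta)$ be the multiplicities. Vanishing then says $\sum_{\eta\in\mu_q}(m_+(\eta)-m_-(\eta))\eta=0$. The only $\mathbb Q$-linear relation among the elements of $\mu_q$ is $\sum_{\eta}\eta=0$, because the minimal polynomial of a primitive $q$-th root is $1+x+\dots+x^{q-1}$. Hence $m_+(\eta)-m_-(\eta)=t$ is independent of $\eta$.

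Now remove opposite pairs: for each $\eta$, pair off $\min(m_+(\eta),m_-(\eta))$ copies of $\eta$ with $-\eta$. What remains is $t$ copies of every $\eta\in\mu_q$ if $t\ge0$, or $|t|$ copies of every $-\eta$ if $t<0$. That is $|t|$ copies of the rotated $q$-cycle $\mu_q$ or $-\mu_q$, which proves the first assertion. For the rotated version, note that if $\sum z\xi_i=0$ with $|z|=1$ then $\sum\xi_i=0$. Opposite pairs and rotated $q$-cycles are preserved under multiplication by $z$, since $z\cdot w\mu_q=(zw)\mu_q$.

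For the final assertion, each rotated vanishing sum decomposes into opposite pairs (two terms each) and rotated $q$-cycles ($q$ terms each). Let the combined collection contain $P$ pairs and $C$ cycles, so that $2P+qC=2q$. Since $q$ is odd, $C$ is even, so $C\in\{0,2\}$. If $C=0$, we get $q$ opposite pairs. If $C=2$, we get $P=0$ and two rotated $q$-cycles. No mixed partition is possible, and this gives the stated dichotomy.

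The only step needing care, which I expect to be the main obstacle, is the linear-independence input: that $\{\eta:\eta\in\mu_q\setminus\{1\}\}$ is a $\mathbb Q$-basis of $\mathbb Q(\omega)$, equivalently the irreducibility of the $q$-th cyclotomic polynomial, which follows from Eisenstein applied at $x+1$. I would cite this and check that the rational coefficients $m_+-m_-$ are indeed integers, so that "$t$ constant" yields integral multiplicities.
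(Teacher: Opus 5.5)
Your proposal is correct and is essentially the paper's proof: write each term as $\pm\eta$ with $\eta\in\mu_q$, and use that $\Phi_q$ is the minimal polynomial of a primitive $q$-th root of unity to force $m_+(\eta)-m_-(\eta)$ to be a constant integer $t$; the paper phrases this as $\Phi_q$ dividing a polynomial of degree at most $q-1$, you as the relation space being spanned by $\sum_{\eta}\eta=0$. Then strip opposite pairs to leave $|t|$ copies of $\pm\mu_q$, and finish with the parity count $2P+qC=2q$; the abandoned $\zeta=e^{\pi i/q}$ bookkeeping in your first paragraph is unnecessary and can be deleted.
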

\end{samepage}

\begin{proof}
Put $\zeta=\exp(2\pi i/q)$. Since $q$ is odd, every element of
$\mu_{2q}$ is uniquely $\zeta^j$ or $-\zeta^j$, with
$0\leq j<q$. Write the vanishing sum as
$\sum_{j=0}^{q-1}(a_j-b_j)\zeta^j=0$, where $a_j,b_j$ are
nonnegative integers. The polynomial
$D(X)=\sum_{j=0}^{q-1}(a_j-b_j)X^j$ is divisible over
$\mathbb Q[X]$ by the minimal polynomial
$\Phi_q(X)=1+X+\cdots+X^{q-1}$. Since $\deg D\leq q-1$,
there is an integer $t$ such that $a_j-b_j=t$ for every $j$.

Remove $\min(a_j,b_j)$ opposite pairs at each $j$. If $t\geq0$,
the remaining terms are $t$ copies of $\mu_q$; if $t<0$, they are
$-t$ copies of $-\mu_q$. Dividing a rotated sum by its common
rotation proves the rotated version as well.

Apply this decomposition separately to every sum in the collection.
If there are $u$ opposite pairs and $v$ cycles in total, then
$2u+qv=2q$. Since $q$ is odd, $v$ must be even, and since
$qv\leq2q$, the only possibilities are $(u,v)=(q,0)$ and
$(u,v)=(0,2)$.
\end{proof}

\subsection{Common zeros and finite quotients}

The dilation equations give a finite collection of spectral
constraints. We first record their common spectral consequence. The
two subsequent lemmas extract the resulting block structure and
identify when that structure already yields a lattice complement.

Let $G$ be a finite-rank free abelian group and let $F\subseteq G$ be
a nonempty finite set of cardinality $n$. For a positive integer $s$,
put
\[
 P_s(\theta)=\sum_{u\in F}\theta(su),
 \qquad P_s(U)=\sum_{u\in F}U_{su}.
\]
For a positive integer $Q$ divisible by every prime divisor of $n$,
define
\begin{equation}\label{eq:general-zeros}
 Z_{F,Q}=\bigcap_{j=0}^{n-1}\{\theta\in\widehat G:
                                      P_{1+Qj}(\theta)=0\}.
\end{equation}

\begin{lemma}[Common-zero spectral support]\label{lem:common-support}
Let $\mu$ be a $G$-invariant Borel probability measure on $X_F$,
let $f(a)=a(0)$, and put $h=f-1/n$. Then
\begin{equation}\label{eq:general-support}
 \operatorname{supp}\sigma_h\subseteq Z_{F,Q},\qquad
 \operatorname{supp}\sigma_f
       \subseteq Z_{F,Q}\cup\{\mathbf1\}.
\end{equation}
\end{lemma}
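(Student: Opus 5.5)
The plan is to turn each dilation identity into an annihilation equation for the centered observable, and then read off the support of its spectral measure. For every $j$ with $0\le j\le n-1$, put $s=1+Qj$. Every prime divisor of $n$ divides $Q$, so no prime divisor of $n$ divides $s$; hence $\gcd(s,n)=1$. Lemma~\ref{lem:dilation} then gives $1_{sF}*a=1$ for every $a\in X_F$. In the dilation $sF$, collisions cannot occur because $G$ is torsion-free, so $sF$ is an honest set of $n$ points.

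Next I translate this into the Koopman language, exactly as in the proof of Lemma~\ref{lem:coordinate-identities}. Since $(U_{su}f)(a)=a(-su)$, the dilated identity at the origin reads $\sum_{u\in F}U_{su}f=1$ pointwise on $X_F$, that is, $P_s(U)f=1$. Each $U_g$ fixes constants, so $P_s(U)1=n$. Subtracting therefore gives
\[
P_s(U)h=1-n\cdot\tfrac1n=0 .
\]

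The spectral identity \eqref{eq:spectral-norm}, applied with $v=h$ and coefficients $\alpha_{su}=1$, now yields
\[
0=\|P_s(U)h\|_2^2=\int_{\widehat G}|P_s(\theta)|^2\,d\sigma_h(\theta).
\]
If distinct $u$ share the same $su$, we can simply merge the coefficients; as noted, this cannot happen here anyway. Thus $\sigma_h$ gives zero mass to the open set $\{P_s\neq0\}$, since $P_s$ is continuous, being a finite trigonometric polynomial. Hence $\operatorname{supp}\sigma_h\subseteq\{P_s=0\}$, which is closed. Intersecting over $j=0,\dots,n-1$ gives $\operatorname{supp}\sigma_h\subseteq Z_{F,Q}$.

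For $f=h+1/n$, I compare spectral measures. The constant $1/n$ is $U$-invariant, so $\sigma_{1/n}=n^{-2}\delta_{\mathbf1}$. Moreover $\langle h,U_g c\rangle=\bar c\int h\,d\mu=0$ by the mean computation in Lemma~\ref{lem:coordinate-identities}, and the other cross term vanishes likewise. Therefore
\[
\langle f,U_gf\rangle=\langle h,U_gh\rangle+n^{-2},
\]
so $\sigma_f=\sigma_h+n^{-2}\delta_{\mathbf1}$. Its support lies in $Z_{F,Q}\cup\{\mathbf1\}$. The only point needing care is the coprimality $\gcd(1+Qj,n)=1$ and the continuity argument for supports; neither is a real obstacle.
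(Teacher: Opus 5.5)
Your proof is correct and follows the paper's argument. The steps match: $\gcd(1+Qj,n)=1$, the annihilation $P_s(U)h=0$ from the dilation lemma, $\int|P_s|^2\,d\sigma_h=0$ giving the support inclusion, and $\sigma_f=\sigma_h+n^{-2}\delta_{\mathbf1}$. The one difference is cosmetic: you get that last decomposition by computing the cross terms $\langle h,U_gc\rangle$ directly, which also handles rank zero without the paper's separate case, whereas the paper uses the reducing pair formed by the constants and their orthogonal complement (and since $h$ and $c$ are real, where you placed the conjugate in the cross term does not matter).
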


\begin{proof}
For $0\leq j<n$, put $s_j=1+Qj$. If a prime $p$ divided both $s_j$
and $n$, then $p\mid Q$ and therefore $s_j\equiv1\pmod p$, a
contradiction. Hence $\gcd(s_j,n)=1$. Lemma~\ref{lem:dilation},
applied to the tiling equation of $a\in X_F$, gives
\[
 (P_{s_j}(U)f)(a)
 =\sum_{u\in F}a(-s_ju)
 =(1_{s_jF}*a)(0)=1.
\]
Every $U_g$ fixes constants, so $P_{s_j}(U)(1/n)=1$ and
$P_{s_j}(U)h=0$. Equation~\eqref{eq:spectral-norm} now gives
\[
 0=\|P_{s_j}(U)h\|_2^2
  =\int_{\widehat G}|P_{s_j}(\theta)|^2\,d\sigma_h(\theta).
\]
Because $P_{s_j}$ is continuous, the support of $\sigma_h$ lies in
its zero set. Intersecting over $j$ proves the first inclusion.

Taking $j=0$ in the pointwise identity above and integrating gives
$n\int f\,d\mu=1$, so $h\perp1$. If $G$ has rank zero, then
$G=F=\{0\}$, $n=1$, $f=1$, and $h=0$; hence
\[
 \sigma_f=\delta_{\mathbf1},\qquad \sigma_h=0.
\]
If $G$ has positive rank, the line of constant functions and its
orthogonal complement reduce every $U_g$. The orthogonal spectral
decomposition $f=h+(1/n)1$ therefore gives
\[
 \sigma_f=\sigma_h+\frac1{n^2}\delta_{\mathbf1}.
\]
The same identity holds in rank zero, and the second inclusion
follows.
\end{proof}

\begin{lemma}[Zero-sum blocks]\label{lem:blocks}
Assume that $0\in F$ and $G=\langle F\rangle\cong\mathbb Z^r$.
For $\theta\in Z_{F,Q}$, partition $F$ by equality of
$\theta(u)^Q$. Each block has zero sum of its phases $\theta(u)$.
If there are $b$ blocks and $H$ is generated by differences within
blocks, then $\operatorname{rank}H\geq r-b+1$ and
$\theta\in(QH)^\perp$.
\end{lemma}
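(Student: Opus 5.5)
The plan is to turn the dilation zeros into a Vandermonde system, read off the block sums, and then get the rank bound from a generating-set count. Fix $\theta\in Z_{F,Q}$ and let $w_1,\ldots,w_b$ be the distinct values of $\theta(u)^Q$ as $u$ ranges over $F$. Let $F_k=\{u\in F:\theta(u)^Q=w_k\}$ be the blocks, and put $S_k=\sum_{u\in F_k}\theta(u)$.

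First, the block sums. Since $\theta(u)^{1+Qj}=\theta(u)\bigl(\theta(u)^Q\bigr)^j$, the defining equations \eqref{eq:general-zeros} of $Z_{F,Q}$ read
\[
 0=P_{1+Qj}(\theta)=\sum_{k=1}^b w_k^{\,j}S_k\qquad(0\leq j\leq n-1).
\]
Because $b\leq n$, the equations with $0\leq j\leq b-1$ form a square Vandermonde system in the unknowns $S_1,\ldots,S_b$. The nodes $w_k$ are distinct, so the matrix is invertible, and hence $S_k=0$ for every $k$. This is the zero-sum assertion.

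Second, the annihilator statement. It suffices to check $\theta$ on generators of $QH$. If $u,v\in F_k$, then
\[
 \theta\bigl(Q(u-v)\bigr)=\theta(u)^Q\,\overline{\theta(v)^Q}=w_k\overline{w_k}=1.
\]
So $\theta$ is trivial on $Q(u-v)$ for every within-block difference. Therefore $\theta$ is trivial on $QH$, that is, $\theta\in(QH)^\perp$.

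Third, the rank bound. Choose a representative $u_k\in F_k$ for each block. For the block containing $0\in F$, take the representative to be $0$. Every $u\in F_k$ can be written as $u=u_k+(u-u_k)$ with $u-u_k\in H$. Hence
\[
 G=\langle F\rangle\subseteq H+\langle u_k:\ u_k\neq0\rangle ,
\]
and the second summand has at most $b-1$ generators. It follows that $r=\operatorname{rank}G\leq\operatorname{rank}H+(b-1)$, which gives $\operatorname{rank}H\geq r-b+1$.

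The only step needing care is the first: one must note $b\leq n$, so that $n$ dilation equations suffice for a full Vandermonde system. One must also use the distinctness of the $w_k$, which holds by the definition of the partition. The other two steps are direct.
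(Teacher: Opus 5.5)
Your proposal is correct and matches the paper's proof: the same Vandermonde system built from the first $b$ dilation equations gives the zero block sums, the same base-point decomposition (with $0$ as the representative of its block) gives $\operatorname{rank}H\geq r-b+1$, and the same within-block computation gives $\theta\in(QH)^\perp$.
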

\begin{proof}
Write the blocks as $B_1,\ldots,B_b$, and let
$t_i=\theta(u)^Q$ for $u\in B_i$ and
$s_i=\sum_{u\in B_i}\theta(u)$. For $0\leq j<n$,
\[
 0=P_{1+Qj}(\theta)
 =\sum_{i=1}^b\sum_{u\in B_i}\theta(u)\theta(u)^{Qj}
 =\sum_{i=1}^b t_i^j s_i.
\]
Since $b\leq n$, the first $b$ equations give $Vs=0$ with
$V=(t_i^j)_{0\leq j<b,\,1\leq i\leq b}$. The values $t_i$
are distinct, so $\det V=\prod_{i<k}(t_k-t_i)\ne0$.
Thus $s_i=0$ for every block.

Relabel the blocks so that $0\in B_1$, and choose base points
$a_i\in B_i$, with $a_1=0$. Every $u\in B_i$
satisfies $u=(u-a_i)+a_i$, and hence
$G=H+\sum_{i=2}^b\mathbb Za_i$. Taking ranks gives
$r\leq\operatorname{rank}H+b-1$.
For $u,v$ in the same block,
$\theta(Q(u-v))=\theta(u)^Q/\theta(v)^Q=1$.
These differences generate $H$, so $\theta\in(QH)^\perp$.
\end{proof}

\begin{lemma}[Independent blocks]\label{lem:quotient}
Suppose $0\in F\subseteq G=\langle F\rangle\cong\mathbb Z^r$.
Partition $F$ into $b$ blocks $B_0,\ldots,B_{b-1}$ with base points
$a_0=0,a_1,\ldots,a_{b-1}$, and let $H$ be generated by their
within-block differences. Assume $\operatorname{rank}H=r-b+1$.
If a homomorphism $\chi\colon H\to K$ to a finite abelian group maps
each normalized block $B_j-a_j$ bijectively onto $K$, then $F$
has a lattice complement in $G$.
\end{lemma}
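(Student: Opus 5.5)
The plan is to build a homomorphism $\Psi\colon G\to K\times\mathbb Z^{b-1}$-type quotient and take its kernel. Since $\operatorname{rank}H=r-b+1$ and $G=H+\sum_{i=1}^{b-1}\mathbb Za_i$ (as in the proof of Lemma~\ref{lem:blocks}), the classes of $a_1,\ldots,a_{b-1}$ in $G/H$ generate a group of rank $b-1$ with $b-1$ generators. Hence $G/H$ is free abelian of rank $b-1$ with basis the images of $a_1,\ldots,a_{b-1}$, and
\[
 G=H\oplus\bigoplus_{i=1}^{b-1}\mathbb Za_i .
\]
Indeed, a surjection $\mathbb Z^{b-1}\to G/H$ onto a group of rank $b-1$ has kernel of rank zero, so it is injective. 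In particular $G/H$ is torsion-free, and the sum is direct.

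Let $m=|K|$. We define $\Psi\colon G\to K\times\mathbb Z/b\mathbb Z$ on this decomposition by
\[
 \Psi\Bigl(h+\sum_{i=1}^{b-1}n_ia_i\Bigr)=\Bigl(\chi(h),\ \sum_{i=1}^{b-1}n_i\varepsilon_i\Bigr),
\]
where $\varepsilon_i$ is chosen in $\mathbb Z/b\mathbb Z$. The simplest choice is $\varepsilon_i=i$, so that $a_j\mapsto(0,j)$ and $a_0=0\mapsto(0,0)$. A point $u\in B_j$ then has $u=(u-a_j)+a_j$ with $u-a_j\in H$, so
\[
 \Psi(u)=\bigl(\chi(u-a_j),\,j\bigr).
\]
By hypothesis, $\chi$ maps $B_j-a_j$ bijectively onto $K$ for each $j$. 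Letting $j$ range over $0,\ldots,b-1$, the map $\Psi|_F$ is therefore a bijection $F\to K\times\mathbb Z/b\mathbb Z$. Note that $\Psi$ is surjective, since its first coordinate is $\chi$, which is onto $K$, and its second coordinate takes every value $j$.

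Put $L=\ker\Psi$. This subgroup has finite index $mb=|F|$ in $G$. Every $g\in G$ has $\Psi(g)=\Psi(f)$ for exactly one $f\in F$, and then $g=f+(g-f)$ with $g-f\in L$. If $g=f+\ell=f'+\ell'$ with $\ell,\ell'\in L$, then $\Psi(f)=\Psi(f')$, so $f=f'$ by injectivity of $\Psi|_F$, and then $\ell=\ell'$. Hence $F\oplus L=G$, so $L$ is a lattice complement, consistent with Lemma~\ref{lem:lattice-index}.

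The main point needing care is the direct-sum decomposition: we need $\chi$, defined only on $H$, to extend to a homomorphism on $G$ that is compatible with the block labels. That is why we show $G/H$ is free on the base points, using the rank hypothesis, rather than merely generated by them. If $b=1$, the statement reduces to $H=G$ and $L=\ker\chi$, which is covered by the same argument with the trivial second factor.
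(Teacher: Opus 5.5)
Your proof is correct and follows the paper's route: the rank hypothesis makes $\mathbb Z^{b-1}\to G/H$ injective, so $G=H\oplus\bigoplus_{i=1}^{b-1}\mathbb Z a_i$, and the kernel of $\Psi(h+\sum n_ia_i)=(\chi(h),\sum i n_i \bmod b)$ is the lattice complement. One small correction: surjectivity of $\Psi$ does not follow from each coordinate being surjective separately, but it already follows from your observation that $\Psi|_F$ is a bijection onto $K\times\mathbb Z/b\mathbb Z$.
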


\begin{proof}

\emph{Step 1: an integral direct sum.}
Every element of $F$ lies in $H+\sum_{j=1}^{b-1}\mathbb Za_j$,
so the homomorphism
$\rho\colon\mathbb Z^{b-1}\to G/H$,
$\rho(m_1,\ldots,m_{b-1})=\sum_jm_ja_j+H$, is surjective.
Consider the exact sequences
\[
 0\longrightarrow H\longrightarrow G\longrightarrow G/H
 \longrightarrow0
\]
and
\[
 0\longrightarrow\ker\rho\longrightarrow\mathbb Z^{b-1}
 \xrightarrow{\rho}G/H\longrightarrow0.
\]
Tensoring with $\mathbb Q$ preserves exactness, and vector-space
rank--nullity therefore gives
\[
 \operatorname{rank}(G/H)=r-\operatorname{rank}H=b-1,
 \qquad
 \operatorname{rank}\ker\rho=(b-1)-\operatorname{rank}(G/H)=0.
\]
A subgroup of the free abelian group $\mathbb Z^{b-1}$ is free, so
rank zero forces $\ker\rho=0$. Hence $\rho$ is an isomorphism and
$G/H\cong\mathbb Z^{b-1}$. Every $g\in G$ has a unique expression
$g=h+\sum_{j=1}^{b-1}m_ja_j$ with $h\in H$ and $m_j\in\mathbb Z$.

\smallskip\noindent\emph{Step 2: a quotient tiled by the blocks.}
The unique expression makes the following homomorphism well-defined:
\begin{equation}\label{eq:independent-quotient}
 \Psi\left(h+\sum_{j=1}^{b-1}m_ja_j\right)
 =\left(\chi(h),\sum_{j=1}^{b-1}jm_j\bmod b\right)
 \in K\times\mathbb Z/b\mathbb Z.
\end{equation}
For $u\in B_j$, this gives $\Psi(u)=(\chi(u-a_j),j)$.
The assumed bijectivity on $B_j-a_j$ shows that $B_j$ maps
bijectively onto $K\times\{j\}$. Hence $\Psi|_F$ is a
bijection onto the whole quotient, and
$[G:\ker\Psi]=b|K|=|F|$.

For any $g\in G$, there is exactly one $u\in F$ with
$\Psi(u)=\Psi(g)$. Then $g-u\in\ker\Psi$; conversely any
representation $g=u+\ell$, $\ell\in\ker\Psi$, must use this
same $u$. Therefore $F\oplus\ker\Psi=G$.

\end{proof}

\section{Periodic two-colorings and covering splittings}\label{sec:split}

Both cardinality arguments eventually reduce to complementary binary
values on a periodic graph. We first prove that such a graph admits
a periodic proper two-coloring, then apply the result to twofold
coverings.

\subsection{Periodic two-colorings}

For connected quasi-transitive bipartite graphs, periodic
two-colorability is recorded in~\cite[Lemma~2.9]{PeriodicGraphs}.
We need the following lattice version, which also handles
disconnected graphs and prescribes the ambient period subgroup.

\begin{lemma}[Periodic two-colorings]\label{lem:color}
Let $G\cong\mathbb Z^r$ and $V\subseteq G$. Let $\mathcal G$ be
an undirected graph with vertex set $V$. Suppose that $V$ and the
edge set are invariant under a finite-index subgroup $\Gamma\leq G$.
If $\mathcal G$ admits a proper two-coloring, then it admits one
invariant under a finite-index subgroup of $\Gamma$.
\end{lemma}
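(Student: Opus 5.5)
We will build the coloring one connected component at a time. The key facts are that a connected bipartite graph has exactly two proper two-colorings, and that $\Gamma$ acts on the set of components with finitely many orbits. The orbit claim needs a little care, since a component may be infinite. Every vertex lies in some translate of a fixed finite fundamental domain $R$ for $G/\Gamma$. Hence every component meets $R$, and so there are at most $|R|<\infty$ orbits of components under $\Gamma$. We fix one representative component $C_1,\ldots,C_m$ from each $\Gamma$-orbit.

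For each representative $C_i$, let $\Gamma_i=\{\gamma\in\Gamma:C_i+\gamma=C_i\}$ be its stabilizer. Fix a proper two-coloring $\kappa_i$ of $C_i$; it is the restriction of the given coloring. The stabilizer $\Gamma_i$ acts on $C_i$ by graph automorphisms, because the edge set is $\Gamma$-invariant. For $\gamma\in\Gamma_i$, the coloring $x\mapsto\kappa_i(x-\gamma)$ is again proper on the connected graph $C_i$. It therefore equals either $\kappa_i$ or $1-\kappa_i$. This defines a homomorphism $\epsilon_i\colon\Gamma_i\to\mathbb Z/2\mathbb Z$; the homomorphism property follows by composing translations. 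Its kernel $\Gamma_i'$ has index at most $2$ in $\Gamma_i$, and $\kappa_i$ is $\Gamma_i'$-invariant.

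Now set $\Gamma'=2\Gamma$, which has finite index in $G$. For every $i$ we have $\Gamma'\cap\Gamma_i\subseteq\Gamma_i'$, since $2\gamma$ lies in the kernel of any homomorphism to $\mathbb Z/2\mathbb Z$. Choose coset representatives $R_i$ for $\Gamma/\Gamma_i$. Every component in the orbit of $C_i$ is then uniquely of the form $C_i+\rho$ with $\rho\in R_i$, and we give it the transported coloring $x\mapsto\kappa_i(x-\rho)$. This is only an intermediate choice. It is not yet $\Gamma'$-invariant: if $\gamma'\in\Gamma'$ moves $C_i+\rho$ to $C_i+\rho'$, then $\rho+\gamma'-\rho'\in\Gamma_i$, but this element need not lie in $2\Gamma$.

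To fix this, we redefine the coloring on the orbit of $C_i$ using cosets of the smaller group $\Gamma_i'$. Let $\bar\epsilon_i\colon\Gamma\to\mathbb Z/2$ be any extension of $\epsilon_i$ modulo $2\Gamma$, if one exists. Then we color $C_i+\gamma$ by $x\mapsto\kappa_i(x-\gamma)+\bar\epsilon_i(\gamma)$. This is well defined exactly when $\epsilon_i$ agrees with $\bar\epsilon_i$ on $\Gamma_i$. The resulting coloring is invariant under $\ker\bar\epsilon_i$, which has finite index.

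Such an extension need not exist, and this is the step I expect to be the main obstacle. For example, if $2\gamma\in\Gamma_i$ for some $\gamma\notin\Gamma_i$ while $\epsilon_i(2\gamma)=1$, then no extension is possible. The simplest remedy is to avoid extensions altogether. Let $N_i=\Gamma_i'\cap\ker$, meaning we define $\Gamma_i''$ as any finite-index subgroup $\Lambda_i\le\Gamma$ with $\Lambda_i\cap\Gamma_i\subseteq\Gamma_i'$. We then color the orbit of $C_i$ by choosing representatives of $\Gamma/(\Lambda_i+\Gamma_i)$ and transporting along $\Lambda_i+\Gamma_i$. For consistency we must check that $\lambda+\gamma_i$ acting trivially forces $\epsilon_i(\gamma_i)=0$, and this holds because $\lambda=-\gamma_i\in\Lambda_i\cap\Gamma_i$. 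The group $\Lambda_i=2\Gamma$ works whenever $\Gamma_i$ is a pure subgroup of $\Gamma$. In general we take $\Lambda_i=2k\Gamma$, where $k$ is the exponent of the finite torsion group $(\Gamma_i^{\mathrm{sat}})/\Gamma_i$. With this choice $\Lambda_i\cap\Gamma_i\subseteq 2\Gamma_i$: if $2k\gamma\in\Gamma_i$, then $k\gamma\in\Gamma_i$ by saturation, and so $2k\gamma\in2\Gamma_i\subseteq\Gamma_i'$.

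Finally, we set $\Lambda=\bigcap_i\Lambda_i$, which is a finite intersection of finite-index subgroups. Under this construction the coloring on each orbit is invariant under $\Lambda$. The orbits are $\Gamma$-invariant and disjoint, so the combined coloring is a proper two-coloring of $\mathcal G$ invariant under the finite-index subgroup $\Lambda\le\Gamma$.
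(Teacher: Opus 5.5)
Your final construction is correct and is essentially the paper's proof. It uses the sign homomorphism $\epsilon_i$ on the component stabilizer, the saturation exponent $k$ giving $2k\Gamma\cap\Gamma_i\subseteq2\Gamma_i\subseteq\ker\epsilon_i$, and transport of $\kappa_i$ along orbits; the paper uses one global $K=2m\Gamma$ with $K$-orbit representatives instead of intersecting per-orbit groups $\Lambda_i$. Delete the earlier claim that $2\Gamma\cap\Gamma_i\subseteq\Gamma_i'$, which is false (exactly the failure in Remark~\ref{rem:saturation}), together with the abandoned extension paragraph, and say that each component has a $\Gamma$-translate meeting $R$, not that it meets $R$.
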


\begin{proof}
\emph{Step 1: the action of a component stabilizer.}
If $V=\varnothing$, the empty coloring is $\Gamma$-invariant.
Assume henceforth that $V\ne\varnothing$. Since $V/\Gamma$ is finite, so is
the set of $\Gamma$-orbits of connected components: every component
contains a translate of a vertex representative. Choose components
$D_1,\ldots,D_t$ representing these orbits, and set
\begin{equation}\label{eq:stabilizer}
 L_D=\{\gamma\in\Gamma:D+\gamma=D\},\qquad
 L_D^{\mathrm{sat}}=(L_D\otimes\mathbb Q)\cap\Gamma.
\end{equation}

Fix a proper coloring $\kappa_D\colon D\to\mathbb Z/2\mathbb Z$.
A connected bipartite graph, including a single vertex, has exactly
two such colorings. For each $\ell\in L_D$ there is therefore
a unique $\varepsilon_D(\ell)\in\mathbb Z/2\mathbb Z$ with
\[
 \kappa_D(x+\ell)=\kappa_D(x)+\varepsilon_D(\ell)
 \qquad(x\in D).
\]
Applying this identity successively to $\ell$ and $\ell'$ shows
that $\varepsilon_D$ is a homomorphism. Replacing $\kappa_D$
by $\kappa_D+1$ leaves $\varepsilon_D$ unchanged. In particular,
$2L_D\subseteq\ker\varepsilon_D$. Translating the component
leaves $L_D$ and $\varepsilon_D$ unchanged.

\smallskip\noindent\emph{Step 2: a common period subgroup.}
Each $L_D^{\mathrm{sat}}/L_D$ is finite. Choose $m\geq1$
divisible by the exponents of these groups for $D_1,\ldots,D_t$,
and put $K=2m\Gamma$. If $k=2m\gamma\in K\cap L_D$, then
$\gamma\in L_D^{\mathrm{sat}}$, so $m\gamma\in L_D$ and
$k=2(m\gamma)\in2L_D$. Thus
\begin{equation}\label{eq:color-stabilizer}
 K\cap L_D\subseteq\ker\varepsilon_D
 \qquad\text{for every connected component }D.
\end{equation}

\smallskip\noindent\emph{Step 3: construct the periodic coloring.}
Choose representatives $C_1,\ldots,C_u$ for the $K$-orbits of
components; there are finitely many since $[\Gamma:K]<\infty$.
Fix a proper coloring $\kappa_i$ on each $C_i$. For $x\in C_i$
and $k\in K$, define
\[
 \beta(x+k)=\kappa_i(x).
\]
This is well-defined. Indeed, if $x+k=x'+k'$ with
$x,x'\in C_i$, then $\ell=k-k'=x'-x$ belongs to $K\cap L_{C_i}$:
the components $C_i$ and $C_i+\ell$ intersect at $x'$ and hence
coincide. By \eqref{eq:color-stabilizer},
$\kappa_i(x')=\kappa_i(x)+\varepsilon_{C_i}(\ell)=\kappa_i(x)$.
Representations using different $C_i$ would put them in the same
$K$-orbit and are excluded by the choice of representatives.

Every edge lies inside one translated component $C_i+k$, where
$\beta$ is a translate of the proper coloring $\kappa_i$.
Thus $\beta$ is proper. Its definition gives
$\beta(v+k)=\beta(v)$ for all $v\in V$ and $k\in K$.
Since $K$ has finite index in $\Gamma$, this proves the lemma.
\end{proof}

\begin{remark}\label{rem:saturation}
The following example shows that the saturation factor is necessary.
Take $G=V=\Gamma=\mathbb Z$ and join $x$ to $x+2$ for every
$x\in\mathbb Z$. This graph is invariant under all integer
translations, and its connected components are
$D_0=2\mathbb Z$ and $D_1=1+2\mathbb Z$. For $i\in\{0,1\}$,
\[
 L_{D_i}=2\mathbb Z,\qquad
 L_{D_i}^{\mathrm{sat}}
 =(2\mathbb Z\otimes_{\mathbb Z}\mathbb Q)\cap\mathbb Z
 =\mathbb Z.
\]
Thus $L_{D_i}^{\mathrm{sat}}/L_{D_i}\cong\mathbb Z/2\mathbb Z$
has exponent $m=2$, and the construction in the proof gives
$K=2m\Gamma=4\mathbb Z$.

Indeed, define $\kappa\colon\mathbb Z\to\mathbb Z/2\mathbb Z$ by
\[
 \kappa(2n)=\kappa(2n+1)=n\pmod 2
 \qquad(n\in\mathbb Z).
\]
Then $\kappa(x+2)=\kappa(x)+1$ for every $x$, so $\kappa$ is a
proper two-coloring, while $\kappa(x+4)=\kappa(x)$, so it is
$4\mathbb Z$-invariant. Conversely, a coloring invariant under
$2\Gamma=2\mathbb Z$ would satisfy $\beta(x+2)=\beta(x)$ for every
$x$, making every edge monochromatic. Hence a $2\Gamma$-invariant
proper coloring need not exist, and the saturation quotients
$L_D^{\mathrm{sat}}/L_D$ must be accounted for.
\end{remark}

\subsection{Splitting a twofold covering}

Let $G$ be an abelian group and let $F\subseteq G$ be finite.
A function $c\colon G\to\{0,1,2\}$ satisfying $1_F*c=2$, equivalently
\[
 \sum_{f\in F}c(x-f)=2\qquad(x\in G),
\]
will be called a \emph{twofold covering}. The value $c(x)$ is the
multiplicity of the translate $x+F$. Such a covering is
\emph{periodic} if there exists a finite-index subgroup
$\Gamma\leq G$ such that $c(x+\gamma)=c(x)$ for every $x\in G$
and every $\gamma\in\Gamma$.

The next proposition explains the support condition needed to split
such a covering. A tiling complement determines a proper coloring
on the sites where the covering has multiplicity one.

\begin{proposition}[Supported splitting]\label{prop:split}
Let $F\subseteq G\cong\mathbb Z^r$ be a finite set. Suppose that
$c\colon G\to\{0,1,2\}$ is periodic and satisfies $1_F*c=2$.
Suppose also that $b\colon G\to\{0,1\}$ satisfies $1_F*b=1$, vanishes on
$\{c=0\}$, and equals one on $\{c=2\}$. Then $F$ has a fully
periodic tiling complement.
\end{proposition}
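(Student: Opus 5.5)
We build a graph on the multiplicity-one sites, obtain a periodic proper two-coloring of it from Lemma~\ref{lem:color}, and keep one colour class together with the multiplicity-two sites. Fix a finite-index $\Gamma\leq G$ with $c(x+\gamma)=c(x)$ for all $x$ and $\gamma\in\Gamma$. Put $V=\{c=1\}$, and join two distinct $v,w\in V$ by an edge when there is $x\in G$ with $v,w\in x-F$. This means both translates $v+F$ and $w+F$ contain $x$.

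Since $1_F*c=2$, each $x$ falls into one of two cases. Either $x-F$ contains exactly one site with $c=2$ and no site of $V$, or it contains exactly two sites of $V$ and no site with $c=2$. So each $x$ contributes at most one edge. The vertex set $V$ and the edge set are $\Gamma$-invariant, because $c$ is.

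The given $b$ yields a proper two-coloring of this graph. Take colour $\kappa(v)=b(v)$ on $V$. At a point $x$ of the second kind, $1_F*b(x)=1$. Moreover $b=0$ on the $c=0$ sites of $x-F$, so exactly one of the two $V$-sites in $x-F$ has $b=1$. Hence every edge gets two different colours.

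Lemma~\ref{lem:color} then gives a proper two-coloring $\beta\colon V\to\mathbb Z/2\mathbb Z$ invariant under a finite-index subgroup $K\leq\Gamma$. Define
\[
 B=\{c=2\}\cup\{v\in V:\beta(v)=1\}.
\]
This set is $K$-invariant.

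We check $1_F*1_B=1$ at each $x$. At a point of the first kind, $x-F$ meets $\{c=2\}$ exactly once and meets $V$ nowhere, so the sum is $1$. At a point of the second kind, the two $V$-sites form an edge, so exactly one of them has $\beta=1$, and there is no $c=2$ site; again the sum is $1$. Thus $F\oplus B=G$ with $B+K=B$ and $[G:K]<\infty$, which is the required fully periodic complement.

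One detail is that $F$ is a set, so $x-F$ lists distinct sites and the counts above are honest. The case $V=\varnothing$ is covered by Lemma~\ref{lem:color} as well. The only nontrivial input is Lemma~\ref{lem:color}. The step to watch is the bookkeeping that each $x$ sees exactly one of the two configurations, which follows directly from the values $c\in\{0,1,2\}$ and $1_F*c=2$.
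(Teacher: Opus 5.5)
Your proposal is correct and follows the paper's proof essentially step for step. Both arguments build the same graph on $\{c=1\}$; your rule of joining two $V$-sites lying in a common $x-F$ gives exactly the paper's edges $\{y-f_1,y-f_2\}$. Both then use $b$ as a proper coloring via its vanishing on $\{c=0\}$, apply Lemma~\ref{lem:color} to get a periodic recoloring, and keep $\{c=2\}$ together with one color class; the only cosmetic difference is that you define the complement $B$ directly as a set rather than through the replacement function $b'$.
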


\begin{proof}
Let $\Gamma$ be a finite-index period subgroup for $c$, and set
$V=\{x:c(x)=1\}$. For $y\in G$, define
$I_j(y)=\{f\in F:c(y-f)=j\}$, $j=0,1,2$.
The covering identity becomes
\[
 |I_1(y)|+2|I_2(y)|=2.
\]
Thus either $I_2(y)=\{f_0\}$ and $I_1(y)=\varnothing$,
or $I_1(y)=\{f_1,f_2\}$ and $I_2(y)=\varnothing$.

In the second case put an edge between $y-f_1$ and $y-f_2$.
The endpoints are distinct since $F$ is a set. Taking all such
edges gives a graph on $V$. For $\gamma\in\Gamma$,
$I_j(y+\gamma)=I_j(y)$, so translation by $\gamma$ preserves
the graph. The support condition and the tiling equation give
\[
 b(y-f_1)+b(y-f_2)
 =\sum_{f\in F}b(y-f)=1
 \qquad\text{whenever }I_1(y)=\{f_1,f_2\}.
\]
Hence $b|_V$ is a proper two-coloring. By Lemma~\ref{lem:color},
there is a proper two-coloring $\beta\colon V\to\{0,1\}$ with a
finite-index period subgroup $K\leq\Gamma$.

Define the replacement explicitly by
\[
 b'(x)=
 \begin{cases}
  0,&c(x)=0,\\
  \beta(x),&c(x)=1,\\
  1,&c(x)=2.
 \end{cases}
\]
Both the level sets of $c$ and $\beta$ are $K$-periodic, so
$b'$ is $K$-periodic. If $I_2(y)=\{f_0\}$, then
$\sum_{f\in F}b'(y-f)=b'(y-f_0)=1$. If
$I_1(y)=\{f_1,f_2\}$, the same sum equals
$\beta(y-f_1)+\beta(y-f_2)=1$. Thus $1_F*b'=1$ everywhere,
and $\{x:b'(x)=1\}$ is the required periodic complement.
\end{proof}

\section{Tiles of cardinality twice a prime}\label{sec:twice-prime}

We prove the $2q$ case in three steps. The common zero set first
yields either a lattice complement or a finite spectral remainder.
We use this remainder to construct a periodic twofold covering,
then apply Proposition~\ref{prop:split}. The final subsection
recovers the known four-point endpoint.

\subsection{The spectral filtering dichotomy}\label{sec:filter}

Until Subsection~\ref{sec:four}, let $q$ be an odd prime and let
$r\geq0$. We work in a free abelian group $G\cong\mathbb Z^r$ with
$F\subseteq G$, $|F|=2q$, $0\in F$, and $G=\langle F\rangle$.
In the notation of Section~\ref{sec:algebra}, specialize to $Q=2q$ and write
\begin{equation}\label{eq:common-zero-set}
 P_s(\theta)=\sum_{f\in F}\theta(sf),\qquad
 Z_F=\bigcap_{j=0}^{2q-1}\{\theta\in\widehat G:
                         P_{1+2qj}(\theta)=0\}.
\end{equation}

The set $Z_F$ is compact, but it need not be finite. We record both
assertions, including an explicit example for the second one.

\begin{proposition}[Compactness and possible infinitude of the common zero set]
\label{prop:common-zero-geometry}
For every odd prime $q$, every finite-rank free abelian group $G$,
and every set $F\subseteq G$ with $|F|=2q$, $0\in F$, and
$G=\langle F\rangle$, the set $Z_F$ in \eqref{eq:common-zero-set}
is compact in $\widehat G$.

For every odd prime $q$, these hypotheses can hold while $Z_F$ is
infinite. Explicitly, let $G=\mathbb Z^2$ with standard basis
$e_1,e_2$ and set
\[
 F=\{a e_1+b e_2:a\in\{0,1\},\ 0\leq b<q\}.
\]
Then $|F|=2q$, $0\in F$, $G=\langle F\rangle$, and $Z_F$ is
infinite.
\end{proposition}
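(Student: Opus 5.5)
Compactness is the routine half. Each $P_s(\theta)=\sum_{f\in F}\theta(sf)$ is a finite sum of evaluation maps $\theta\mapsto\theta(sf)$. These are continuous on $\widehat G$ for the compact-open topology, which, since $G$ is discrete, is the topology of pointwise convergence. Hence each zero set $\{P_{1+2qj}=0\}$ is closed. Then $Z_F$, a finite intersection of closed sets, is closed in the compact group $\widehat G$ and therefore compact. Equivalently, one can pass through the isomorphism $\Phi\colon\mathbb T^r\to\widehat G$ recorded before \eqref{eq:annihilator-coordinates} and note that $P_s\circ\Phi$ is a trigonometric polynomial.

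For the example, the checks $|F|=2q$, $0\in F$ and $\langle F\rangle\ni e_1,e_2$ (so $G=\langle F\rangle=\mathbb Z^2$) are immediate. To see that $Z_F$ is infinite, I will exhibit a one-parameter family of zeros. Write $\theta=\Phi(x_1,x_2)$, so $\theta(ae_1+be_2)=e^{2\pi i(ax_1+bx_2)}$. The set factors as $F=\{0,e_1\}+\{0,e_2,\dots,(q-1)e_2\}$, hence
\[
 P_s(\theta)=\bigl(1+e^{2\pi i s x_1}\bigr)\sum_{b=0}^{q-1}e^{2\pi i s b x_2}.
\]
Take $x_1=1/2$ and $x_2$ arbitrary. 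Every $s=1+2qj$ is odd, so $e^{2\pi i s/2}=-1$ and the first factor vanishes. Thus $P_{1+2qj}(\theta)=0$ for all $j$, and the whole circle $\{1/2\}\times\mathbb T$ lies in $Z_F$. That circle is uncountable, so $Z_F$ is infinite. Alternatively one could use $x_2=1/q$ with $x_1$ arbitrary: since $q\nmid s$, the second factor is a full sum of $q$-th roots of unity and vanishes. Either family suffices, and I would present the first.

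The only step needing care is the parity/coprimality check that $s=1+2qj$ is odd, or alternatively coprime to $q$, for every $j$. This is trivial here, so there is no real obstacle. The role of the statement is to warn that $Z_F$ can be a union of affine circles rather than a finite set, consistent with this $F$ being a product tile that has the lattice complement $2\mathbb Z\times q\mathbb Z$.
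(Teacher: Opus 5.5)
Your proposal is correct and follows the paper's proof essentially verbatim: compactness comes from continuity of the evaluation maps and closedness of the zero sets in the compact dual, and infinitude comes from the circle of characters $\theta(me_1+ne_2)=(-1)^m t^n$ (your $x_1=1/2$ family, with $t=e^{2\pi i x_2}$), on which the factor $1+(-1)^s$ vanishes because every $s=1+2qj$ is odd. Your alternative family with $x_2=1/q$ is also valid, since $s\equiv1\pmod q$, but it is not needed.
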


\begin{proof}
Fix an odd prime $q$, a finite-rank free abelian group
$G\cong\mathbb Z^r$, and a set $F$ satisfying the hypotheses. Since
$\widehat G\cong\mathbb T^r$, the dual group $\widehat G$ is compact.

For each $g\in G$, the evaluation map
$\theta\mapsto\theta(g)$ is continuous in the product topology on
$\widehat G$. Hence every dilation polynomial
\[
 P_s(\theta)=\sum_{f\in F}\theta(sf)
\]
is continuous. Equation~\eqref{eq:common-zero-set} expresses $Z_F$
as a finite intersection of inverse images
$P_{1+2qj}^{-1}(\{0\})$. These inverse images are closed, so $Z_F$
is a closed subset of the compact space $\widehat G$ and is therefore
compact.

For the infinitude claim, fix an odd prime $q$, take
$G=\mathbb Z^2$ with standard basis $e_1,e_2$, and let
\[
 F=\{a e_1+b e_2:a\in\{0,1\},\ 0\leq b<q\}.
\]
The displayed parametrization is injective, so $|F|=2q$, and it
includes $0$. Since $q\geq3$, it also includes $e_1$ and $e_2$,
whence $G=\langle F\rangle$.

For every $t\in\mathbb C$ with $|t|=1$, define a character
$\theta_t\in\widehat G$ by
\[
 \theta_t(me_1+ne_2)=(-1)^m t^n
 \qquad(m,n\in\mathbb Z).
\]
If $s=1+2qj$ with $0\leq j<2q$, then $s$ is odd and
\[
\begin{aligned}
 P_s(\theta_t)
 &=\sum_{a=0}^1\sum_{b=0}^{q-1}(-1)^{sa}t^{sb}\\
 &=\bigl(1+(-1)^s\bigr)\sum_{b=0}^{q-1}t^{sb}=0.
\end{aligned}
\]
Therefore $\theta_t\in Z_F$ for every $t$ on the unit circle. Since
$\theta_t(e_2)=t$, these characters are pairwise distinct, so $Z_F$
is infinite.
\end{proof}

Proposition~\ref{prop:common-zero-geometry} shows that $Z_F$ may be
infinite. The following proposition controls the subset of $Z_F$ that
survives multiplication by $P_q$.

\begin{proposition}[Filtering the opposite-pair components]\label{prop:filter}
Under the preceding assumptions, either $F$ has a lattice complement
in $G$, or there is a finite set $S\subseteq\widehat G$ of torsion
characters such that
\begin{equation}\label{eq:filter}
 \{\theta\in Z_F:P_q(\theta)\neq0\}\subseteq S.
\end{equation}
The set $S$ may be chosen to depend only on $F$.
\end{proposition}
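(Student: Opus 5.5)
Fix $\theta\in Z_F$ with $P_q(\theta)\neq0$, and apply Lemma~\ref{lem:blocks} with $Q=2q$. We have $n=2q$, and $Q$ is divisible by every prime divisor of $n$. The lemma partitions $F$ into blocks on which $\theta(u)^{2q}$ is constant, and each block has vanishing phase sum. Inside a block, all ratios $\theta(u)/\theta(v)$ lie in $\mu_{2q}$. So each block's phases form a vanishing sum of elements of $\mu_{2q}$ rotated by a common unit number, namely the phase of a base point. Applying Lemma~\ref{lem:roots} block by block, the total length $2q$ forces the terms to split into either $q$ opposite pairs or two rotated $q$-cycles. Each piece lies inside a single block.

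\emph{Opposite pairs are annihilated by $P_q$.} In the pair case, $q$ is odd, so each pair $\{z,-z\}$ contributes $z^q+(-z)^q=0$ to $P_q(\theta)=\sum_{u\in F}\theta(u)^q$, and hence $P_q(\theta)=0$. Therefore every surviving $\theta$ is in the two-cycle case. That is, $F=C_1\sqcup C_2$ with $|C_i|=q$, and $\{\theta(u):u\in C_i\}=z_i\mu_q$ with each value taken once. In particular the $u\in C_i$ are distinct points carrying distinct phases.

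\emph{Case split on the rank of $H$.} There are only finitely many partitions $F=C_1\sqcup C_2$ into two $q$-sets. For each one, let $H=H(C_1,C_2)$ be generated by $(C_1-C_1)\cup(C_2-C_2)$. Every generator $u-v$ satisfies $\theta(u-v)\in\mu_q$. Hence $\theta|_H$ is a homomorphism $\chi\colon H\to\mu_q\cong\mathbb Z/q\mathbb Z$, and $\theta\in(qH)^\perp$. The argument of Lemma~\ref{lem:blocks}, with these two sets as blocks, gives $\operatorname{rank}H\geq r-1$.

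If $\operatorname{rank}H=r-1$, choose base points $a_i\in C_i$ with $a_1=0$. The values $\theta(u-a_i)=\theta(u)/\theta(a_i)$ for $u\in C_i$ are the $q$ distinct elements of $\mu_q$. So $\chi$ maps each normalized block $C_i-a_i$ bijectively onto $\mu_q$. Lemma~\ref{lem:quotient}, with $b=2$ and $K=\mu_q$, then produces a lattice complement for $F$ in $G$.

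\emph{Defining $S$.} Suppose no surviving $\theta$ realizes a partition of rank $r-1$. Then every surviving $\theta$ lies in $(qH)^\perp$ for some partition with $\operatorname{rank}H=r$. In that case $qH$ has finite index in $G$, so $(qH)^\perp$ is a finite group of torsion characters by \eqref{eq:annihilator-coordinates}. Set
\[
 S=\bigcup_{\substack{F=C_1\sqcup C_2,\ |C_i|=q\\ \operatorname{rank}H(C_1,C_2)=r}}(qH(C_1,C_2))^\perp .
\]
This is a finite set of torsion characters depending only on $F$, and \eqref{eq:filter} holds. The dichotomy is therefore: either some $\theta\in Z_F$ realizes a rank-$(r-1)$ partition, giving a lattice complement, or the inclusion with this $S$ holds. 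The degenerate rank $r=0$ cannot occur, since $|F|=2q>1$.

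\emph{Main obstacle.} The delicate point is the bookkeeping that identifies the cycles of Lemma~\ref{lem:roots} with actual subsets of $F$. Three facts are needed. First, each rotated cycle or pair comes from the decomposition of a single block, so its $\theta^{2q}$ values agree. Second, the multiset of phases lifts to distinct points of $F$, with coinciding phases allowed only across different pieces. Third, $\theta|_H$ is genuinely a homomorphism into $\mu_q$, which is what makes Lemma~\ref{lem:quotient} applicable.
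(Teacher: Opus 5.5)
Your proposal is correct and follows essentially the same route as the paper. You use the block decomposition of Lemma~\ref{lem:blocks}, the pair/two-cycle dichotomy of Lemma~\ref{lem:roots}, and the annihilation of opposite pairs by $P_q$. You obtain a lattice complement when the within-cycle group has rank $r-1$, and you define $S$ as the union of $(qH)^\perp$ over the full-rank two-part partitions. The one cosmetic difference is that you invoke Lemma~\ref{lem:quotient} with $b=2$ and $K=\mu_q$ (labeling the part containing $0$ as $C_1$), while the paper writes out the splitting $G=H\oplus\mathbb Za$ and the map $\Psi$ explicitly; this is the same construction.
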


\begin{proof}
\emph{Step 1: cancellation of opposite pairs.}
Fix $\theta\in Z_F=Z_{F,2q}$ and write $z_f=\theta(f)$.
By Lemma~\ref{lem:blocks}, partitioning $F$ according to the values
$z_f^{2q}$ gives vanishing blocks.

Within each block, divide all phases by one chosen phase in that block.
Every resulting ratio belongs to $\mu_{2q}$. Lemma~\ref{lem:roots}
therefore applies separately to the blocks and then to their collection.
All $2q$ phases can be partitioned into $q$ opposite pairs or into
two rotated $q$-cycles. In the first case, each opposite pair contributes
$z^q+(-z)^q=0$ to $P_q(\theta)$, because $q$ is odd.

\smallskip\noindent\emph{Step 2: the rank-deficient alternative.}
Suppose now that $P_q(\theta)\neq0$. The decomposition must then
consist of two $q$-cycles. It determines a partition
$F=F_0\sqcup F_1$, with $|F_0|=|F_1|=q$. Label the part containing
zero as $F_0$, choose $a_0=0$ and $a_1\in F_1$, and put
\begin{equation}\label{eq:within-cycle-group}
 H=\langle f-a_i:f\in F_i,\ i=0,1\rangle,
 \qquad a=a_1.
\end{equation}
For $f\in F_i$, write $f=a_i+(f-a_i)$. Thus
$G=H+\mathbb Za$. The cycle condition says that
$\{\theta(f-a_i):f\in F_i\}=\mu_q$, with each value occurring
once. Consequently $\theta(H)\subseteq\mu_q$ and
$\operatorname{rank}H\geq r-1$. It also gives the explicit
multiplier value $P_q(\theta)=q(1+\theta(a)^q)$.

If $\operatorname{rank}H=r-1$, put $\zeta=\exp(2\pi i/q)$
and write $\theta(h)=\zeta^{\varphi(h)}$ for $h\in H$.
The homomorphism $\varphi\colon H\to\mathbb Z/q\mathbb Z$ maps
each normalized block bijectively onto $\mathbb Z/q\mathbb Z$.
Moreover, $G/H$ is generated by $a+H$ and has rank one. If a
nonzero multiple of $a$ belonged to $H$, then this cyclic quotient
would be finite, contrary to its rank. Hence $H\cap\mathbb Za=\{0\}$
and $G=H\oplus\mathbb Za$. We may therefore define the quotient map
\begin{equation}\label{eq:lattice-quotient}
 \Psi\colon H\oplus\mathbb Za\longrightarrow
              (\mathbb Z/q\mathbb Z)\times(\mathbb Z/2\mathbb Z),
 \qquad \Psi(h+ma)=(\varphi(h),m\bmod2).
\end{equation}
The points of $F_0$ map bijectively to the layer with second coordinate
zero, and those of $F_1$ map bijectively to the layer with second
coordinate one. Hence $F$ is a complete set of representatives for
$\ker\Psi$, so $F\oplus\ker\Psi=G$.

\smallskip\noindent\emph{Step 3: finite torsion support.}
If $F$ has no lattice complement, this rank-deficient alternative is
impossible. Thus every partition arising from a character with
$P_q(\theta)\neq0$ has $\operatorname{rank}H=r$. Since
$\theta(H)\subseteq\mu_q$, we have $\theta\in(qH)^\perp$.
Using the coordinates in \eqref{eq:annihilator-coordinates}, write
$H=\bigoplus_{i=1}^r m_i\mathbb Ze_i$. Then
$qH=\bigoplus_{i=1}^r qm_i\mathbb Ze_i$, so $(qH)^\perp$ is a
finite torsion group of order
\[
 |(qH)^\perp|=[G:qH]=\prod_{i=1}^r(qm_i)=q^r[G:H].
\]

There are only finitely many partitions of $F$ into two $q$-element
parts. For each partition, the subgroup in
\eqref{eq:within-cycle-group} is independent of the chosen base point
inside either part: it is generated by all differences within each part.
Take $S$ to be the union of $(qH)^\perp$ over the full-rank partitions.
This finite set depends only on $F$ and satisfies \eqref{eq:filter}.
\end{proof}

\subsection{Construction of a periodic twofold covering}\label{sec:cover}

We now realize the filtered spectrum as a twofold covering.
The Frobenius congruence supplies its integer multiplicities, while
Proposition~\ref{prop:filter} supplies periodicity.

\begin{proposition}[Periodic twofold covering]\label{prop:cover}
Let $G\cong\mathbb Z^r$, and let $F\subseteq G$ be a tile of
cardinality $2q$, where $q$ is an odd prime. Suppose that $0\in F$
and $G=\langle F\rangle$. Then either $F$ has a lattice complement,
or there is a tiling indicator $a\colon G\to\{0,1\}$ such that
\begin{equation}\label{eq:two-cover}
 c=\frac1q1_{qF}*a\quad\text{is periodic},\qquad
 c(G)\subseteq\{0,1,2\},\qquad 1_F*c=2.
\end{equation}
Here periodicity is in the sense of Section~\ref{sec:split}: $c$ is
invariant under a finite-index subgroup of $G$.
For this $a$ and any fixed $f_0\in F$, the tiling indicator
$b(x)=a(x-qf_0)$ satisfies
\begin{equation}\label{eq:support-constraints}
 b(x)=0\ \text{when }c(x)=0,\qquad
 b(x)=1\ \text{when }c(x)=2.
\end{equation}
\end{proposition}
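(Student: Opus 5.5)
The plan is to get the integrality and the range of $c$ from the Frobenius congruence, to read off \eqref{eq:support-constraints} pointwise, and to obtain periodicity from Proposition~\ref{prop:filter} combined with the spectral calculus of Section~\ref{sec:algebra}. If Proposition~\ref{prop:filter} gives a lattice complement, we are in the first alternative and there is nothing to prove. So assume there is a finite set $S$ of torsion characters with $\{\theta\in Z_F:P_q(\theta)\ne0\}\subseteq S$.

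\emph{Arithmetic part, valid for any tiling indicator $a$.} As in the proof of Lemma~\ref{lem:dilation}, the Frobenius identity in the group ring over $\mathbb Z/q\mathbb Z$ gives $1_{qF}\equiv1_F^{*q}\pmod q$. Hence
\[
1_{qF}*a\equiv1_F^{*(q-1)}*1=(2q)^{q-1}\equiv0\pmod q,
\]
so $c=q^{-1}1_{qF}*a$ is a nonnegative integer-valued function. Commuting convolutions gives $1_F*c=q^{-1}1_{qF}*1=2q/q=2$. Each value of $c$ appears as a summand in such a sum of nonnegative integers equal to $2$, so $c(G)\subseteq\{0,1,2\}$.

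Since $G$ is torsion-free, $f\mapsto qf$ is injective on $F$, and $qc(x)=\sum_{f\in F}a(x-qf)$ is a sum of $2q$ binary values. If $c(x)=0$, all these values vanish; in particular $b(x)=a(x-qf_0)=0$. If $c(x)=2$, all $2q$ values equal one, so $b(x)=1$. The function $b$ is a translate of $a$, so it is again a tiling indicator. This proves \eqref{eq:support-constraints} for every $a$. It remains to choose $a$ so that $c$ is periodic.

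\emph{Spectral part.} Because $F$ tiles, Subsection~\ref{subsec:systems} supplies an invariant Borel probability measure $\mu$ on $X_F$. Consider the observable $C=q^{-1}P_q(U)f$. Unwinding $U_g\phi=\phi\circ T_{-g}$ gives
\[
C(a)=q^{-1}\sum_{u\in F}a(-qu)=c_a(0),
\]
where $c_a$ is the function $c$ built from $a$. Shift covariance gives $c_a(x)=C(T_{-x}a)$.

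By the density formula after \eqref{eq:spectral-norm}, $d\sigma_C=q^{-2}|P_q|^2\,d\sigma_f$. Lemma~\ref{lem:common-support} gives $\operatorname{supp}\sigma_f\subseteq Z_F\cup\{\mathbf1\}$. Together with the filtering inclusion, this yields $\operatorname{supp}\sigma_C\subseteq S\cup\{\mathbf1\}$, a finite set of torsion characters. By \eqref{eq:torsion-periods} there is a finite-index subgroup $\Gamma\le G$ with $U_\gamma C=C$ in $L^2(\mu)$ for every $\gamma\in\Gamma$.

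\emph{Choosing a good $a$.} This is where I expect the main technical care, namely passing from $L^2$-invariance to a pointwise statement about a single configuration. For each pair $(x,\gamma)\in G\times\Gamma$, the set
\[
\{a: C(T_{-x-\gamma}a)\ne C(T_{-x}a)\}
\]
is the $T_{x}$-preimage of the null set $\{U_\gamma C\ne C\}$. By invariance of $\mu$ it is therefore null. The index set $G\times\Gamma$ is countable, so $\mu$-almost every $a\in X_F$ satisfies $c_a(x+\gamma)=c_a(x)$ for all $x\in G$ and $\gamma\in\Gamma$. Since $\mu$ is a probability measure, some such $a$ exists.

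For this $a$, the function $c$ is $\Gamma$-periodic, and the arithmetic part gives its range, the identity $1_F*c=2$, and the support constraints \eqref{eq:support-constraints}. The rank-zero case is trivial and is covered by the same argument, because there $\widehat G=\{\mathbf1\}$.
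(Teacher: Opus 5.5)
Your proof is correct and follows essentially the same route as the paper. The Frobenius congruence gives integrality, the range $\{0,1,2\}$, $1_F*c=2$ and the support constraints for every tiling indicator, while Proposition~\ref{prop:filter}, the multiplier identity $d\sigma_C=q^{-2}|P_q|^2\,d\sigma_f$, \eqref{eq:torsion-periods}, and a countable intersection of conull sets produce a configuration with pointwise $\Gamma$-periodic $c$.

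The differences are cosmetic. You work with the uncentered $f$ and absorb the trivial character into the finite torsion set $S\cup\{\mathbf1\}$ rather than passing through $h=f-1/(2q)$. Also, the shift identity should read $c_a(x)=C(T_xa)$ rather than $C(T_{-x}a)$; this sign slip is harmless because the intersection runs over all $x\in G$ and $\gamma\in\Gamma$.
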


\begin{proof}
\emph{Step 1: the covering and support identities.}
We first establish the covering and support identities for every tiling
indicator $a$. In the group ring $(\mathbb Z/q\mathbb Z)[G]$, the
Frobenius identity gives $1_{qF}\equiv1_F^{*q}$. Convolving with $a$
and using $1_F*a=1$ gives
\begin{equation}\label{eq:frobenius-q}
\begin{aligned}
 1_{qF}*a
 &\equiv1_F^{*q}*a
  =1_F^{*(q-1)}*(1_F*a)\\
 &=1_F^{*(q-1)}*1=(2q)^{q-1}1\equiv0\pmod q.
\end{aligned}
\end{equation}
The left-hand side is the sum of $2q$ binary values and lies between
$0$ and $2q$. Therefore its quotient by $q$ belongs to $\{0,1,2\}$.
Associativity gives $1_F*c=q^{-1}1_{qF}*(1_F*a)=2$.
If $c(x)=0$, every term $a(x-qf)$ equals zero; if $c(x)=2$,
every such term equals one. This proves \eqref{eq:support-constraints}.
Translation preserves the tiling equation, so $b$ is a tiling indicator.

\smallskip\noindent\emph{Step 2: periodicity of the covering observable.}
Assume henceforth that $F$ has no lattice complement. It remains to
choose a tiling indicator $a$ for which $c$ is periodic.

Choose an invariant probability measure $\mu$ on $X_F$ as in
Subsection~\ref{subsec:systems}. Ergodicity is unnecessary in this case.
Let $f(a)=a(0)$ and $h=f-1/(2q)$. Lemma~\ref{lem:dilation}
gives, for $0\leq j<2q$,
\begin{equation}\label{eq:annihilation}
 \sum_{v\in F}U_{(1+2qj)v}h=0.
\end{equation}

Equations~\eqref{eq:annihilation} and~\eqref{eq:spectral-norm} imply
$\int|P_{1+2qj}|^2\,d\sigma_h=0$ for every $0\leq j<2q$.
Since the polynomials are continuous, $\sigma_h$ is supported on
$Z_F$. Consider the observables
\begin{equation}\label{eq:cover-observable}
 C_0=\frac1q\sum_{v\in F}U_{qv}h,
 \qquad C=\frac1q\sum_{v\in F}U_{qv}f=\frac1q+C_0.
\end{equation}
The spectral multiplier identity yields
\begin{equation}\label{eq:filtered-measure}
 d\sigma_{C_0}(\theta)=\frac{|P_q(\theta)|^2}{q^2}\,d\sigma_h(\theta).
\end{equation}
Proposition~\ref{prop:filter} supplies a finite torsion set $S$.
Since $\operatorname{supp}\sigma_h\subseteq Z_F$,
\[
 \sigma_{C_0}(\widehat G\setminus S)
 =\frac1{q^2}\int_{Z_F\setminus S}|P_q(\theta)|^2\,
 d\sigma_h(\theta)=0.
\]
The finite-torsion-spectrum consequence \eqref{eq:torsion-periods},
applied to $C_0$, gives the finite-index subgroup
\[
 \Gamma=\bigcap_{\theta\in S}\ker\theta,
\]
with $\Gamma=G$ if $S$ is empty, and gives
$U_\gamma C_0=C_0$ for every $\gamma\in\Gamma$.
Since $C=C_0+1/q$, also $U_\gamma C=C$ in $L^2$.

\smallskip\noindent\emph{Step 3: realization on a tiling configuration.}
For a configuration $a\in X_F$, let $c_a=q^{-1}1_{qF}*a$.
The shift convention gives
$C(T_xa)=q^{-1}\sum_{v\in F}(T_xa)(-qv)
=q^{-1}\sum_{v\in F}a(x-qv)=c_a(x)$.
For fixed $x\in G$ and $\gamma\in\Gamma$, the identity
$U_{-\gamma}C=C$ and invariance of $\mu$ imply
$C(T_{x+\gamma}a)=C(T_xa)$ for almost every $a$.
Consequently the countable intersection
\[
 X_* =\bigcap_{x\in G}\bigcap_{\gamma\in\Gamma}
 \{a\in X_F:C(T_{x+\gamma}a)=C(T_xa)\}
\]
has measure one. Choose $a\in X_*$; then
$c_a(x+\gamma)=c_a(x)$ simultaneously for all $x$ and $\gamma$.
Since $a\in X_F$, its tiling equation holds at every coordinate,
and $c_a$ is pointwise $\Gamma$-periodic.
The covering and support identities already proved for all $a\in X_F$
complete the argument.
\end{proof}

\subsection{Completion of the twice-prime case}\label{subsec:twice-completion}

The preceding construction meets exactly the support requirements
of Proposition~\ref{prop:split}.

\begin{proof}[Proof of Theorem~\ref{thm:main} when $|F|=2q$]
By Lemma~\ref{lem:intrinsic-reduction}, translate $F$ to contain zero
and work in $G=\langle F\rangle$. If $F$ has a lattice complement
$L$ in $G$, then Lemma~\ref{lem:lattice-index} gives
$[G:L]=|F|$. Since $L+L=L$, the subgroup $L$ is invariant under
itself and is therefore a fully periodic tiling complement in $G$.
The conclusion then follows from Lemma~\ref{lem:extend}.
Otherwise Proposition~\ref{prop:cover} gives a periodic twofold
covering and a compatible tiling indicator.
Proposition~\ref{prop:split} gives a fully periodic complement
in $G$, and Lemma~\ref{lem:extend} extends it to $\mathbb Z^d$.
Undoing the initial translation completes the proof.
\end{proof}

\subsection{The four-point endpoint}\label{sec:four}

The cancellation $z^q+(-z)^q=0$ in the filtering argument uses
that $q$ is odd. At $q=2$, the block decomposition itself forces
finite torsion spectrum, unless a lattice complement already exists.
This gives the following proof of Szegedy's four-point theorem
\cite{Szegedy}.

\begin{proposition}[Szegedy's four-point case]\label{prop:four}
For every integer $d\geq1$, every four-point tile
$F\subseteq\mathbb Z^d$ admits a fully periodic tiling complement.
\end{proposition}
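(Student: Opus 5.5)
We reduce to the intrinsic lattice by Lemma~\ref{lem:intrinsic-reduction}: translate so that $0\in F$, put $G=\langle F\rangle\cong\mathbb Z^r$, and restrict the tiling to $G$. A fully periodic complement found in $G$ extends to $\mathbb Z^d$ by Lemma~\ref{lem:extend}; if $F$ has a lattice complement in $G$, that lattice is already such a complement. So assume from now on that $F$ has no lattice complement in $G$. Take an invariant measure $\mu$ on $X_F$ as in Subsection~\ref{subsec:systems} and set $h=f-1/4$. With $Q=2$, Lemma~\ref{lem:common-support} gives $\operatorname{supp}\sigma_h\subseteq Z_{F,2}$, the common zero set of $P_1,P_3,P_5,P_7$.

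Next we classify $\theta\in Z_{F,2}$. By Lemma~\ref{lem:blocks}, group $F$ by the values $\theta(u)^2$. Each block has zero phase sum, and since $\theta(u)^2$ is constant on a block, the phases in a block are $\pm z$; a zero sum then forces equally many of each sign. The block sizes are therefore even, and there are two cases.

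\emph{Two blocks of size two.} Write $F=\{0,u\}\sqcup\{a,a+v\}$ with $\theta(u)=\theta(v)=-1$, and let $H=\langle u,v\rangle$, so $\operatorname{rank}H\geq r-1$.
\begin{itemize}
\item If $\operatorname{rank}H=r-1$, apply Lemma~\ref{lem:quotient} with $K=\mathbb Z/2\mathbb Z$ and $\chi$ induced by $\theta$ on $H$. This $\chi$ maps each normalized block $\{0,u\}$ and $\{0,v\}$ bijectively onto $K$, so $F$ has a lattice complement. This is excluded.
\item Otherwise $H$ has full rank, and $\theta\in(2H)^\perp$, a finite torsion group.
\end{itemize}

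\emph{One block of size four.} All $\theta(u)^2$ are equal, and $\theta(0)=1$, so $\theta(F)\subseteq\{\pm1\}$, with two values of each sign. Let $H=\langle F-F\rangle=G$. Then $\theta\in(2G)^\perp$, again a finite torsion set; no rank condition is needed here.

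There are finitely many partitions of $F$ into pairs. So, unless $F$ has a lattice complement, $Z_{F,2}$ is contained in a finite set $S$ of torsion characters depending only on $F$. Here no filtering by a multiplier like $P_q$ is needed.

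Then \eqref{eq:torsion-periods} applied to $h$, and hence to $f=h+1/4$, gives a finite-index $\Gamma=\bigcap_{\theta\in S}\ker\theta$ with $U_\gamma f=f$ in $L^2$ for all $\gamma\in\Gamma$. As in Step~3 of Proposition~\ref{prop:cover}, intersect the countably many full-measure sets $\{a: a(x+\gamma)=a(x)\}$ over $x\in G$, $\gamma\in\Gamma$. This gives a configuration $a\in X_F$ that is pointwise $\Gamma$-periodic. That is a fully periodic tiling complement in $G$, and Lemma~\ref{lem:extend} finishes.

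The main obstacle is the block classification at $q=2$. We must check that zero-sum blocks with constant squared phase really split as described, and that the rank-deficient two-block case fits the hypotheses of Lemma~\ref{lem:quotient}. The latter needs $\operatorname{rank}H=r-b+1$ with $b=2$, and $\chi$ well defined on $H$; both follow because $\theta$ restricted to $H$ takes values in $\{\pm1\}$. If in some degenerate configuration the blocks overlap, for example both pairs lying in a single block of size four, we classify that $\theta$ under the one-block case, which already yields torsion.
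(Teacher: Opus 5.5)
Your proposal is correct and follows the paper's proof essentially step for step: you reduce to the intrinsic lattice, classify $Z_{F,2}$ into opposite pairs, use Lemma~\ref{lem:quotient} in the rank-$(r-1)$ case versus a finite torsion set $(2H)^\perp$ otherwise, and then obtain a pointwise-periodic configuration from \eqref{eq:torsion-periods}, extended by Lemma~\ref{lem:extend}. The only difference is cosmetic: you handle the single-block case directly via $\theta\in(2G)^\perp$, whereas the paper splits those four $\pm1$ phases into two opposite pairs and runs the same rank dichotomy uniformly.
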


\begin{proof}
\emph{Step 1: opposite pairs.}
Fix an integer $d\geq1$ and a four-point tile
$F\subseteq\mathbb Z^d$. By Lemma~\ref{lem:intrinsic-reduction},
translate the tile and work first in
$G=\langle F\rangle\cong\mathbb Z^r$ with $0\in F$.
Using the dilation polynomials from Section~\ref{sec:algebra}, set
\[
 Z_F^{(4)}=\bigcap_{j=0}^{3}\{\theta:P_{1+2j}(\theta)=0\}.
\]
For $\theta\in Z_F^{(4)}$, partition the four phases
$z_f=\theta(f)$ by their squares. The Vandermonde argument in
Lemma~\ref{lem:blocks} shows that each block has zero sum. The phases
in a block are all $z$ or $-z$ for a fixed $z$, so a vanishing block
consists of equally many copies of these two values. Thus all four
phases split into two opposite pairs $F_0,F_1$.

\smallskip\noindent\emph{Step 2: a lattice complement or finite spectrum.}
Write $F_0=\{0,d_0\}$ and $F_1=\{a_1,a_1+d_1\}$.
The opposite-pair condition gives $\theta(d_0)=\theta(d_1)=-1$.
For $H=\langle d_0,d_1\rangle$, we therefore have
$G=H+\mathbb Za_1$, $\theta(H)\subseteq\mu_2$, and
$\operatorname{rank}H\geq r-1$. If the rank is $r-1$, define
$\chi\colon H\to\mathbb Z/2\mathbb Z$ by
$\theta(h)=(-1)^{\chi(h)}$. This homomorphism is bijective on each
normalized pair. Lemma~\ref{lem:quotient}, with $b=2$ and
$K=\mathbb Z/2\mathbb Z$, constructs the quotient map
\[
 \Psi\colon G=H\oplus\mathbb Za_1\longrightarrow
 (\mathbb Z/2\mathbb Z)^2,
 \qquad \Psi(h+ma_1)=(\chi(h),m\bmod2),
\]
and gives $F\oplus\ker\Psi=G$. Thus $L=\ker\Psi$ is a lattice
complement. Its index is $[G:L]=|F|=4$ by
Lemma~\ref{lem:lattice-index}. Since $L+L=L$, it is invariant under
itself and is a fully periodic tiling complement in $G$.
Otherwise $H$, and hence $2H$, has full rank; moreover
$\theta\in(2H)^\perp$. The annihilator-coordinate description
\eqref{eq:annihilator-coordinates} shows that $(2H)^\perp$ is a
finite torsion group. If there is no lattice complement, taking the
union over the finitely many pair partitions shows that the entire
set $Z_F^{(4)}$ is contained in a finite torsion set.

\smallskip\noindent\emph{Step 3: a periodic tiling.}
In the remaining case, choose an invariant probability measure on
$X_F$ and let $f(a)=a(0)$. Here $Z_F^{(4)}=Z_{F,2}$, so
Lemma~\ref{lem:common-support}, equivalently the support inclusion
\eqref{eq:general-support} with $n=4$ and $Q=2$, shows that
$\sigma_f$ is supported on the finite torsion set
$Z_F^{(4)}\cup\{\mathbf1\}$. The finite-torsion-spectrum consequence
\eqref{eq:torsion-periods} gives a finite-index subgroup
$\Gamma\leq G$ such that $U_\gamma f=f$ for every
$\gamma\in\Gamma$. As in Step~3 of the proof of
Proposition~\ref{prop:cover}, intersect the conull sets on which
$f(T_{x+\gamma}a)=f(T_xa)$ over all $x\in G$ and
$\gamma\in\Gamma$. Every configuration in this intersection
is a pointwise $\Gamma$-periodic tiling indicator.
In both alternatives, Lemma~\ref{lem:extend} extends the complement
to $\mathbb Z^d$; undoing the initial translation gives the result
for the original tile.
\end{proof}

\section{Periodicity from rational affine circles}\label{sec:circles}

For nine-point tiles, the algebraic reduction may leave spectral
circles instead of a finite torsion set. We now prove
Theorem~\ref{thm:circle} to handle this alternative. We first
obtain a conditional-density dichotomy for a general ergodic lattice
action. We then use the tiling partition to replace the components
of density $1/2$ by periodic two-colorings.

\subsection{Spectral decomposition and conditional laws}\label{subsec:statistics}

Let $G=\mathbb Z^d$, $d\geq2$, act ergodically by
probability-preserving transformations on a standard probability
space $(X,\mathcal B,\mu)$. For $g\in G$, $L\leq G$, and
$h\in L^2(X,\mu)$, write
\[
 U_gh=h\circ T_{-g},\qquad P_Lh=\mathbb E(h\mid\mathcal I_L),
\]
and let $\sigma_h$ be the spectral measure of $h$ for $U$.
Proposition~\ref{prop:invariant-projection} and
Lemma~\ref{lem:invariant-bounds} apply directly to this action. If
$\mathcal H_v=\overline{\operatorname{span}}\{U_gv:g\in G\}$, there
is a cyclic spectral multiplier model
\[
 W_v\colon\mathcal H_v\longrightarrow L^2(\widehat G,\sigma_v),
 \qquad W_vv=1,
\]
such that, for $w\in\mathcal H_v$,
\[
 d\sigma_w=|W_vw|^2\,d\sigma_v,
 \qquad
 W_v(P_Lw)=1_{L^\perp}W_vw.
\]
Also, positivity and unitality of conditional expectation give
$0\leq P_Lh\leq1$ whenever $h$ is real-valued and $0\leq h\leq1$.

Let $D\in\mathcal B$, put $f=1_D$, and suppose that $\sigma_f$ is
supported on finitely many rational affine circles. We first record
the coordinate description of the direction of a rational circle.

\begin{lemma}[One-dimensional subtori]\label{lem:one-dimensional-subtori}
Let $\mathbb T=\mathbb R/\mathbb Z$, and call
$v=(v_1,\ldots,v_d)\ne0$ \emph{primitive} if
$v_1\mathbb Z+\cdots+v_d\mathbb Z=\mathbb Z$. Here a
one-dimensional subtorus of $\mathbb T^d$ means a subgroup $H$,
with the subspace topology, for which there is a topological-group
isomorphism $\mathbb T\to H$. Every such subtorus has the form
\[
 H=\mathbb T v:=\{tv:t\in\mathbb T\}
\]
for a primitive vector $v\in\mathbb Z^d$, and this vector is unique
up to sign. Consequently the rational line $\mathbb Qv\subseteq
\mathbb Q^d$ depends only on $H$.
\end{lemma}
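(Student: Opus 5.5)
We pass to the universal cover. Let $\pi\colon\mathbb R^d\to\mathbb T^d$ be the quotient map and let $\phi\colon\mathbb T\to H$ be the given topological-group isomorphism. We lift $\phi$ to a continuous homomorphism $\mathbb R\to\mathbb R^d$. Concretely, $\phi\circ p\colon\mathbb R\to\mathbb T^d$ is a continuous homomorphism, where $p\colon\mathbb R\to\mathbb T$ is the quotient. By the standard lifting for the covering $\pi$, it lifts uniquely to a continuous map $\tilde\phi\colon\mathbb R\to\mathbb R^d$ with $\tilde\phi(0)=0$. Uniqueness of lifts then makes $\tilde\phi$ additive, so $\tilde\phi(t)=tw$ for some $w\in\mathbb R^d$; here continuity together with additivity forces linearity. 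Since $\phi\circ p$ kills $1$, we get $\pi(w)=0$, that is, $w\in\mathbb Z^d$. Moreover $w\neq0$, because $\phi$ is injective and hence nonconstant.

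Next we show that $w$ is primitive. Write $w=kv$ with $k\geq1$ and $v$ primitive, where $k$ is the gcd of the entries of $w$. Then $\phi(p(1/k))=\pi(v)=0$, since $v\in\mathbb Z^d$. Injectivity of $\phi$ forces $p(1/k)=0$, so $k=1$. Hence $H=\phi(\mathbb T)=\pi(\mathbb Rv)=\{tv:t\in\mathbb T\}=\mathbb Tv$.

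For uniqueness, suppose $\mathbb Tv=\mathbb Tv'$ with $v,v'$ primitive. We first show that $v'$ is a real multiple of $v$. The set $\pi^{-1}(\mathbb Tv)$ equals $\mathbb Rv+\mathbb Z^d$, a countable union of parallel lines. The connected line $\mathbb Rv'$ lies in this set, and a connected set cannot meet countably many disjoint closed parallel lines in more than one of them (a Baire or connectedness argument, or simply projecting onto the orthogonal complement of $v$, whose image is countable and therefore constant by connectedness). It follows that $v'\in\mathbb Rv$. Both vectors are integral and primitive, so $v'=\pm v$. As a consequence, $\mathbb Qv$ depends only on $H$.

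The main obstacle is the lifting step. We must make sure that the lift of a continuous homomorphism is again a homomorphism and is linear. This follows from uniqueness of lifts: for each fixed $s$, the maps $t\mapsto\tilde\phi(s+t)-\tilde\phi(s)$ and $\tilde\phi$ are both lifts of the same map and agree at $0$, hence coincide.
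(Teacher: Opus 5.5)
Your proof is correct. It differs from the paper's mainly in the uniqueness step.

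\textbf{Existence.} This half is essentially the paper's argument. The paper first classifies continuous endomorphisms of $\mathbb T$: it lifts to $\mathbb R$ and gets additivity because a continuous integer-valued function on $\mathbb R^2$ is constant. It then applies this coordinatewise to the inclusion $\mathbb T\to H\subseteq\mathbb T^d$. You instead lift $\phi\circ p$ directly through $\pi\colon\mathbb R^d\to\mathbb T^d$ and get additivity from uniqueness of lifts. These are the same idea. Your primitivity step, evaluating at $1/k$ with $k$ the gcd of the entries, is also the same as the paper's.

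\textbf{Uniqueness.} Here you take a genuinely different route.
\begin{itemize}
\item The paper first proves the converse implication: for primitive $v$ the map $t\mapsto tv$ is injective. Hence $(t\mapsto tv)^{-1}\circ(t\mapsto tw)$ is an automorphism of $\mathbb T$. By the endomorphism classification it is multiplication by $\pm1$, and uniqueness of the integer multipliers gives $w=\pm v$.
\item You work on the cover. The set $\pi^{-1}(H)=\mathbb Rv+\mathbb Z^d$ contains the line $\mathbb Rv'$, and projecting onto $v^\perp$ forces $v'\in\mathbb Rv$.
\end{itemize}
Your version uses only the sets $\mathbb Tv$ and $\mathbb Tv'$. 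It needs neither injectivity of the parametrizations nor the classification of automorphisms of $\mathbb T$. The paper's version instead reuses the lemma it has just proved and stays purely algebraic.

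\textbf{Two small polishing points.}
\begin{itemize}
\item Keep the projection argument rather than the loosely worded claim that a connected set ``cannot meet'' more than one of the lines; as worded, that claim needs the set to lie inside their union. The projection step does not even need connectedness. The projection of $\mathbb Rv'$ is $\mathbb R$ times the projection of $v'$, and that set is countable only if the projection of $v'$ is $0$.
\item The passage from $v'\in\mathbb Rv$ to $v'=\pm v$ deserves one explicit line. The ratio is rational because both vectors are integral, and primitivity of both vectors forces it to be $\pm1$.
\end{itemize}
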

\begin{proof}
Write $p\colon\mathbb R\to\mathbb T$ for the quotient map. We first show
that every continuous homomorphism $\psi\colon\mathbb T\to\mathbb T$ has
the form $\psi(t)=nt$ for a unique $n\in\mathbb Z$. By the
covering-space lifting criterion~\cite[Proposition~1.33]{Hatcher}, the
map $\psi\circ p$ has a continuous lift
$\widetilde\psi\colon\mathbb R\to\mathbb R$ with $\widetilde\psi(0)=0$.
For $x,y\in\mathbb R$,
\[
 \widetilde\psi(x+y)-\widetilde\psi(x)-\widetilde\psi(y)\in\mathbb Z.
\]
This is a continuous integer-valued function of $(x,y)$ and is zero
at $(0,0)$, so it is identically zero. Thus $\widetilde\psi$ is
additive, and continuity gives $\widetilde\psi(x)=cx$ for
$c=\widetilde\psi(1)$. Since $p(x+1)=p(x)$, one has $p(c)=0$, hence
$c\in\mathbb Z$. This also proves uniqueness.

Let $H\leq\mathbb T^d$ be a one-dimensional subtorus, and choose a
topological-group isomorphism $\rho\colon\mathbb T\to H$. Composing with
the coordinate projections gives continuous homomorphisms
$\rho_j\colon\mathbb T\to\mathbb T$, so there are integers $v_j$ such
that $\rho_j(t)=v_jt$. For $v=(v_1,\ldots,v_d)$, therefore,
\[
 \rho(t)=tv,\qquad H=\mathbb T v.
\]
The subgroup $v_1\mathbb Z+\cdots+v_d\mathbb Z$ is $m\mathbb Z$ for
a unique $m\geq0$. If $m=0$, then $v=0$ and $\rho$ is not injective;
if $m>1$, then $1/m\in\mathbb T$ is a nonzero element killed by
$t\mapsto tv$. Hence injectivity forces $m=1$, which is precisely
primitivity. Conversely, if $v$ is primitive, choose integers $a_j$
with $\sum_{j=1}^da_jv_j=1$. If $tv=0$ in $\mathbb T^d$, then
$t=\sum_{j=1}^da_j(v_jt)=0$ in $\mathbb T$, so $t\mapsto tv$ is
injective.

Finally, suppose $\mathbb T v=\mathbb T w=:H$ for primitive $v,w$.
The map
\[
 (t\mapsto tv)^{-1}\circ(t\mapsto tw)\colon\mathbb T\longrightarrow\mathbb T
\]
is an automorphism, hence multiplication by an integer $n$. Its
inverse is also multiplication by an integer, so $n=\pm1$.
Coordinatewise uniqueness then gives $w=nv=\pm v$.
\end{proof}

Choose an integer $N\geq1$ and rational affine circles
$C_1,\ldots,C_N\subseteq\widehat G$ such that
\[
 \operatorname{supp}\sigma_f\subseteq S:=\bigcup_{k=1}^N C_k.
\]
If the spectral support is empty, take $N=1$ and choose an arbitrary
rational affine circle.
Use the standard-coordinate isomorphism
\[
 \Phi\colon\mathbb T^d\longrightarrow\widehat G,
 \qquad
 \Phi(x)(g)=\exp\bigl(2\pi i\langle g,x\rangle_{\mathbb T}\bigr),
 \qquad
 \langle g,x\rangle_{\mathbb T}=\sum_{r=1}^d g_rx_r\in\mathbb T.
\]
For $g,v\in\mathbb Z^d$, write
$g\mathbin{\cdot}v=\sum_{r=1}^dg_rv_r\in\mathbb Z$. By
Lemma~\ref{lem:one-dimensional-subtori}, for every $k$ we may choose
a torsion point $a_k\in\mathbb T^d$ and a primitive vector
$w_k\in\mathbb Z^d\setminus\{0\}$ such that
\[
 C_k=\Phi(a_k+\mathbb T w_k).
\]
List the distinct directions among the $\mathbb Qw_k$ as
$\mathbb Qv_1,\ldots,\mathbb Qv_s$, choosing the $v_i$ primitive.
Thus $s\geq1$ and the $v_i$ are pairwise nonparallel. Set
\[
 S_i=\{a_k:\mathbb Qw_k=\mathbb Qv_i\},
 \qquad
 S_i+\mathbb T v_i=\{a+tv_i:a\in S_i,\ t\in\mathbb T\}.
\]
Then each $S_i$ is a nonempty finite set of torsion points and
\[
 S=\bigcup_{i=1}^s\Phi(S_i+\mathbb T v_i).
\]
Henceforth, $i,j\in\{1,\ldots,s\}$. Choose $q_i\geq1$ such that
$q_i a=0$ in $\mathbb T^d$ for every
$a\in S_i$, and define
\[
 H_i=\{g\in G:g\mathbin{\cdot}v_i=0\},
 \qquad
 \Lambda_i=q_iH_i,
 \qquad
 f_i=P_{\Lambda_i}f,
 \qquad
 f_0=f-\sum_{i=1}^s f_i.
\]

\begin{lemma}[Rational-circle data]\label{lem:rational-circle-data}
For the data above,
\[
 \Phi(S_i+\mathbb T v_i)\subseteq\Lambda_i^\perp,
 \qquad \operatorname{rank}\Lambda_i=d-1,
\]
and
\[
 \operatorname{rank}(\Lambda_i+\Lambda_j)=d\qquad(i\ne j).
\]
Moreover,
\[
 0\leq f_i\leq1\qquad\text{$\mu$-almost everywhere}.
\]
\end{lemma}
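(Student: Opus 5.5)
The plan is to verify the four assertions directly from the definitions in the standard coordinates given by $\Phi$.

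\emph{Containment.} Take $a\in S_i$ and $t\in\mathbb T$, and let $g=q_ih\in\Lambda_i$ with $h\in H_i$. Then
\[
 \Phi(a+tv_i)(g)=\exp\bigl(2\pi i(\langle q_ih,a\rangle_{\mathbb T}+t\,q_i(h\mathbin{\cdot}v_i))\bigr).
\]
The first term vanishes because $q_ia=0$ in $\mathbb T^d$, so $\langle q_ih,a\rangle_{\mathbb T}=\langle h,q_ia\rangle_{\mathbb T}=0$. The second vanishes because $h\mathbin{\cdot}v_i=0$. Hence $\Phi(S_i+\mathbb T v_i)\subseteq\Lambda_i^\perp$.

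\emph{Rank of $\Lambda_i$.} $H_i$ is the kernel of the surjective-onto-a-nonzero-subgroup homomorphism $g\mapsto g\mathbin{\cdot}v_i$ from $\mathbb Z^d$ to $\mathbb Z$; it is nonzero since $v_i\ne0$. After tensoring with $\mathbb Q$, rank–nullity gives $\operatorname{rank}H_i=d-1$. Multiplying by $q_i\geq1$ preserves rank, so $\operatorname{rank}\Lambda_i=d-1$.

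\emph{Rank of $\Lambda_i+\Lambda_j$ for $i\ne j$.} Work in $\mathbb Q^d$. The $\mathbb Q$-span of $\Lambda_i$ is the hyperplane $v_i^{\perp}=\{x:x\mathbin{\cdot}v_i=0\}$, since $H_i\otimes\mathbb Q$ is the kernel of the rational functional $x\mapsto x\mathbin{\cdot}v_i$ and has the right dimension. Because $v_i,v_j$ are nonparallel, these are distinct hyperplanes: equal orthogonal hyperplanes would force $\mathbb Qv_i=\mathbb Qv_j$. Their sum therefore strictly contains a hyperplane, so it is all of $\mathbb Q^d$, giving $\operatorname{rank}(\Lambda_i+\Lambda_j)=d$.

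\emph{Bounds on $f_i$.} Since $f=1_D$ satisfies $0\leq f\leq1$, Lemma~\ref{lem:invariant-bounds} with $L=\Lambda_i$ gives $0\leq f_i=P_{\Lambda_i}f\leq1$ almost everywhere. That lemma was stated for tiling systems, but its proof uses only the defining property of conditional expectation, so it applies verbatim to the general ergodic action, as remarked at the start of this subsection.

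None of these steps is a real obstacle. The only point needing care is the identification of $\operatorname{span}_{\mathbb Q}\Lambda_i$ with the hyperplane $v_i^\perp$, together with the deduction that distinct rational directions give distinct hyperplanes; this is what the nonparallel choice of the $v_i$ guarantees.
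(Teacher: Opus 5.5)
Your proof is correct and follows the paper's argument essentially step for step: the same character evaluation for the annihilator inclusion, the same distinct-hyperplanes argument for $\operatorname{rank}(\Lambda_i+\Lambda_j)=d$, and positivity and unitality of conditional expectation for $0\leq f_i\leq1$. The only difference is cosmetic. You get $\operatorname{rank}H_i=d-1$ by rank--nullity over $\mathbb Q$, which does not need primitivity, whereas the paper uses primitivity of $v_i$ and B\'ezout to split $G=H_i\oplus\mathbb Z u_i$; either works.
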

\begin{proof}
Fix $i$, take $a\in S_i$ and $t\in\mathbb T$, and let
$\lambda=q_ih\in\Lambda_i$ with $h\in H_i$. Then
\[
\begin{aligned}
 \Phi(a+tv_i)(\lambda)
 &=\exp\!\left(2\pi i q_i
   \left(\sum_{r=1}^d h_ra_r+t(h\mathbin{\cdot}v_i)\right)\right)\\
 &=1.
\end{aligned}
\]
Indeed, $q_ia=0$ makes the first term zero in $\mathbb T$, while
$h\mathbin{\cdot}v_i=0$ makes the second term zero. This proves the
annihilator inclusion.

Because $v_i$ is primitive, B\'ezout's identity gives $u_i\in G$ with
$u_i\mathbin{\cdot}v_i=1$. Every $g\in G$ has the unique
decomposition
\[
 g=\bigl(g-(g\mathbin{\cdot}v_i)u_i\bigr)
 +(g\mathbin{\cdot}v_i)u_i\in H_i\oplus\mathbb Zu_i.
\]
Thus $H_i$, and hence $\Lambda_i=q_iH_i$, has rank $d-1$. If
$i\ne j$, the rational spans of $H_i$ and $H_j$ are distinct
hyperplanes, since their normal vectors $v_i$ and $v_j$ are
nonparallel. Their sum is therefore $\mathbb Q^d$, so
$\Lambda_i+\Lambda_j$ has rank $d$. Finally, positivity and unitality
of conditional expectation give
$0\leq P_{\Lambda_i}f=f_i\leq1$ almost everywhere.
\end{proof}

\begin{lemma}[Periodic correction]\label{lem:periodic-correction}
There is a finite-index subgroup $\Gamma_0\leq G$ such that
\[
 U_\gamma f_0=f_0\quad\text{in }L^2(X,\mu)
 \qquad(\gamma\in\Gamma_0).
\]
For every $i\in\{1,\ldots,s\}$ and every integer $a\geq1$, there is
a finite-index subgroup $\Gamma_{i,a}\leq G$ such that
\begin{equation}\label{eq:periodic-correction}
 U_\gamma\bigl(P_{a\Lambda_i}f-f_i\bigr)
 =P_{a\Lambda_i}f-f_i\quad\text{in }L^2(X,\mu)
 \qquad(\gamma\in\Gamma_{i,a}).
\end{equation}
\end{lemma}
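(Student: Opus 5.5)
Both assertions will be proved in the cyclic spectral multiplier model of Proposition~\ref{prop:invariant-projection} for $v=f$. The key identity is $W_f(P_Lf)=1_{L^\perp}$ for every subgroup $L\leq G$. Hence
\[
 W_f f_0=1-\sum_{i=1}^s1_{\Lambda_i^\perp},
 \qquad
 W_f\bigl(P_{a\Lambda_i}f-f_i\bigr)=1_{(a\Lambda_i)^\perp}-1_{\Lambda_i^\perp}.
\]
Since $\Lambda_i\supseteq a\Lambda_i$, the second multiplier is $1_{(a\Lambda_i)^\perp\setminus\Lambda_i^\perp}$. Once the corresponding spectral measure $|m|^2\,d\sigma_f$ is shown to be carried by a finite set of torsion characters, consequence~\eqref{eq:torsion-periods} gives a finite-index subgroup fixing the vector. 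The whole proof therefore reduces to locating the support of these multipliers inside $S=\operatorname{supp}$-carrier of $\sigma_f$.

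\emph{The vector $f_0$.} We work on $S=\bigcup_i\Phi(S_i+\mathbb T v_i)$, which carries $\sigma_f$. By Lemma~\ref{lem:rational-circle-data}, each piece $\Phi(S_i+\mathbb Tv_i)$ lies in $\Lambda_i^\perp$, so $1_{\Lambda_i^\perp}=1$ there.

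If $i\neq j$, then $\Lambda_i+\Lambda_j$ has full rank. Hence $\Lambda_i^\perp\cap\Lambda_j^\perp=(\Lambda_i+\Lambda_j)^\perp$ is a finite torsion group.

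Put $E=\bigcup_{i\ne j}(\Lambda_i^\perp\cap\Lambda_j^\perp)$, which is finite. At a point of $S\setminus E$ exactly one indicator $1_{\Lambda_i^\perp}$ equals one, so $W_ff_0$ vanishes there. Thus $\sigma_{f_0}=|W_ff_0|^2\sigma_f$ is carried by the finite torsion set $E$.

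Applying \eqref{eq:torsion-periods} yields $\Gamma_0=\bigcap_{\theta\in E}\ker\theta$ if $E\ne\varnothing$, and $\Gamma_0=G$ otherwise.

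\emph{The vector $P_{a\Lambda_i}f-f_i$.} Its spectral measure is $1_{(a\Lambda_i)^\perp\setminus\Lambda_i^\perp}\,d\sigma_f$, carried by $S\cap(a\Lambda_i)^\perp\setminus\Lambda_i^\perp$. The piece $\Phi(S_i+\mathbb Tv_i)$ lies in $\Lambda_i^\perp$, so it contributes nothing.

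For $j\ne i$, the piece $\Phi(S_j+\mathbb Tv_j)\subseteq\Lambda_j^\perp$ meets $(a\Lambda_i)^\perp$ only inside $(a\Lambda_i+\Lambda_j)^\perp$. This group is finite torsion because $a\Lambda_i+\Lambda_j$ has full rank: it contains $a\Lambda_i+a\Lambda_j$, which has full rank since $\operatorname{rank}(\Lambda_i+\Lambda_j)=d$.

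So the spectral measure is carried by the finite torsion set $\bigcup_{j\ne i}(a\Lambda_i+\Lambda_j)^\perp$, and \eqref{eq:torsion-periods} again gives $\Gamma_{i,a}$. When $s=1$ this set is empty and we may take $\Gamma_{i,a}=G$.

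\emph{Main obstacle.} The one point needing care is the support-versus-carrier bookkeeping. Statements like ``$\sigma_f$ is supported in $S$'' must be used as $\sigma_f(\widehat G\setminus S)=0$, valid because $S$ is closed. The identity $d\sigma_w=|W_fw|^2d\sigma_f$ then turns the pointwise vanishing of the multiplier on $S$ minus a finite set into the vanishing of $\sigma_w$ off that finite set.

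One must also check that the multipliers are evaluated as genuine functions. This holds because $L^\perp$ is a Borel (closed) set, so $1_{L^\perp}$ is a well-defined Borel function.

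The rank statements for $a\Lambda_i+\Lambda_j$ are routine, since scaling by $a$ does not change rational spans.
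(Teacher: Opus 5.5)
Your proposal is correct and follows essentially the same route as the paper. Both compute the multipliers $1-\sum_i 1_{\Lambda_i^\perp}$ and $1_{(a\Lambda_i)^\perp\setminus\Lambda_i^\perp}$ in the cyclic model of $f$, show that they vanish $\sigma_f$-almost everywhere off the finite torsion sets $\bigcup_{i<j}(\Lambda_i+\Lambda_j)^\perp$ and $\bigcup_{j\ne i}(a\Lambda_i+\Lambda_j)^\perp$, and finish with the torsion-kernel argument. The only difference is cosmetic: you work on the circle union $S$ itself, whereas the paper works on the larger union $\bigcup_i\Lambda_i^\perp$, and this changes nothing.
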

\begin{proof}
Write $A_i=\Lambda_i^\perp$. In the cyclic spectral multiplier model
$W_f$ recalled above,
\[
 W_ff_i=1_{A_i},
 \qquad
 W_ff_0=m_0:=1-\sum_{i=1}^s1_{A_i},
 \qquad
 d\sigma_{f_0}=|m_0|^2\,d\sigma_f.
\]
The support assumption and Lemma~\ref{lem:rational-circle-data} show
that $\sigma_f$ is supported on $\bigcup_{i=1}^sA_i$. On this union,
$m_0$ vanishes unless a point belongs to at least two of the $A_i$.
Since
\[
 A_i\cap A_j=(\Lambda_i+\Lambda_j)^\perp,
\]
it follows that
\[
 \operatorname{supp}\sigma_{f_0}
 \subseteq E_0:=
 \bigcup_{1\leq i<j\leq s}(\Lambda_i+\Lambda_j)^\perp.
\]
Each $\Lambda_i+\Lambda_j$ has full rank by
Lemma~\ref{lem:rational-circle-data}, so its annihilator is a finite
set of torsion characters by the coordinate description
\eqref{eq:annihilator-coordinates}. Thus $E_0$ is a finite torsion
set.

We record the resulting kernel argument. If a vector $v$ in the
cyclic space of $f$ has multiplier supported on a finite set $E$ of
torsion characters, put
\[
 \Gamma_E=\bigcap_{\theta\in E}\ker\theta,
\]
with $\Gamma_E=G$ when $E$ is empty. A common multiple $n$ of the
orders of the characters in $E$ satisfies $nG\leq\Gamma_E$, so
$\Gamma_E$ has finite index. For $\gamma\in\Gamma_E$, the
intertwining identity gives
\[
 W_f(U_\gamma v)(\theta)
 =\theta(\gamma)W_fv(\theta)=W_fv(\theta)
\]
for $\sigma_f$-almost every $\theta$. Hence $U_\gamma v=v$ in
$L^2(X,\mu)$. Applying this to $f_0$ and $E_0$ gives $\Gamma_0$.

Fix $i$ and $a\geq1$, put $B=(a\Lambda_i)^\perp$, and set
$w=P_{a\Lambda_i}f-f_i$. Since $a\Lambda_i\leq\Lambda_i$, one has
$A_i\subseteq B$. The invariant-projection multiplier formula gives
\[
 W_fw=1_B-1_{A_i}=1_{B\setminus A_i},
 \qquad
 d\sigma_w=1_{B\setminus A_i}\,d\sigma_f.
\]
Because $\sigma_f$ is supported on $\bigcup_{j=1}^sA_j$,
\[
 \operatorname{supp}\sigma_w
 \subseteq E_{i,a}:=
 \bigcup_{\substack{1\leq j\leq s\\j\ne i}}(B\cap A_j)
 =\bigcup_{\substack{1\leq j\leq s\\j\ne i}}
   (a\Lambda_i+\Lambda_j)^\perp.
\]
For $j\ne i$, the group $a\Lambda_i+\Lambda_j$ contains
$a(\Lambda_i+\Lambda_j)$ and therefore has full rank. Hence
$E_{i,a}$ is a finite set of torsion characters. The same kernel
argument, applied to $w$ and $E_{i,a}$, gives $\Gamma_{i,a}$ and
proves \eqref{eq:periodic-correction}.
\end{proof}

Fix a finite-index subgroup $\Gamma_0$ as in
Lemma~\ref{lem:periodic-correction}. We next determine the laws of
the summands after restriction to finite-index ergodic components.
A bar denotes reduction modulo $\mathbb Z$.

\begin{lemma}[Stable conditional laws]\label{lem:conditional-laws}
For every $j\in\{1,\ldots,s\}$, there is a finite-index subgroup
$\Gamma_j\leq G$ such that, for every $H\leq\Gamma_j$ with
$[\Gamma_j:H]<\infty$ and every $H$-ergodic component $E$,
\begin{equation}\label{eq:hereditary}
 (\overline f_j)_*\mu_E
 \quad\text{is Haar probability measure on $\mathbb R/\mathbb Z$,
 or $\delta_z$ for some $z\in\mathbb R/\mathbb Z$.}
\end{equation}
\end{lemma}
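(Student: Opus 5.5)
The plan is to realize $e(f_j):=\exp(2\pi i f_j)$ as an eigenfunction of a finite-index subgroup, on each finite-index ergodic component, and then to read off the law of $\overline f_j$ from a rotation-invariance dichotomy on $\mathbb R/\mathbb Z$. The main obstacle is the second step below, where integrality of $f$ has to be transported to $f_j$ modulo one.

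\emph{Step 1 (rotation dichotomy).} Suppose $\Gamma'\leq G$ has finite index and, on a $\Gamma'$-ergodic component $E$, there is a homomorphism $c\colon\Gamma'\to\mathbb R/\mathbb Z$ with
\[
\overline{U_\gamma f_j}=\overline f_j+c(\gamma)\quad\mu_E\text{-a.e.}\qquad(\gamma\in\Gamma').
\]
Invariance of $\mu_E$ under $\Gamma'$ then makes $(\overline f_j)_*\mu_E$ invariant under rotation by the subgroup $c(\Gamma')$. The image $c(\Gamma')$ is finitely generated, so its torsion part is finite. Replacing $\Gamma'$ by the kernel of $c$ composed with projection to that torsion part, I may assume $c(\Gamma')$ is torsion-free. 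This finite-index refinement should be chosen once, uniformly over the finitely many components given by Lemma~\ref{lem:components}, and this defines $\Gamma_j$.

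For any finite-index $H\leq\Gamma_j$, the group $c(H)$ has finite index in the torsion-free group $c(\Gamma_j)$, so two cases arise. If $c(H)=0$, then $\overline f_j$ is $H$-invariant on the $H$-ergodic component, hence constant, and its law is $\delta_z$. Otherwise $c(H)$ contains an element of infinite order, that is, an irrational rotation, and is therefore dense. The only probability measure invariant under a dense group of rotations is Haar measure. This gives \eqref{eq:hereditary}, and it is hereditary in $H$ precisely because torsion was removed in advance.

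\emph{Step 2 (the cocycle relation, main obstacle).} Here I must produce $c$, using that $f=1_D$ is integer-valued together with the decomposition $f=f_0+\sum_i f_i$. By Lemma~\ref{lem:periodic-correction}, $f_0$ is $\Gamma_0$-invariant. Each $f_i$ is $\Lambda_i$-invariant, and for $i\neq j$ the group $\Lambda_i+\Lambda_j$ has full rank by Lemma~\ref{lem:rational-circle-data}.

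I would first apply $P_{a\Lambda_j}$ to the identity $f=f_0+\sum_i f_i$ and invoke \eqref{eq:periodic-correction}. This should show that $f_j$ differs from $P_{a\Lambda_j}f$ by a term with finite torsion spectrum, hence periodic. Next I would compare $U_\gamma f_j$ with $f_j$ for $\gamma$ in $\Gamma_0\cap\bigcap_{i,a}\Gamma_{i,a}$. The terms $f_i$ with $i\neq j$ contribute quantities invariant along $\Lambda_i$. Integrality of $U_\gamma f-f$ then forces
\[
U_\gamma f_j-f_j\equiv\text{const}\pmod 1
\]
on ergodic components, and the constants are additive in $\gamma$ by the cocycle identity.

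If this direct comparison fails, I would fall back on the spectral picture. The spectral measure of $f_j$ lives on $A_j=\Lambda_j^\perp$, a finite union of circles in direction $v_j$. Along $u_j$ with $u_j\mathbin{\cdot}v_j=1$, this suggests following the conditional-density argument of Bhattacharya~\cite[Theorem~4.4]{Bhattacharya} and Khetan~\cite[Appendix~C]{Khetan}. That argument passes to the rank-one factor $G/\Lambda_j$, where $f_j$ lives, and proves the eigenfunction relation there.
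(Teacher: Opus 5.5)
\textbf{Step~1 is fine; the gap is in Step~2.} Step~1 is sound as a conditional statement. Suppose that on finite-index components one had $\overline{U_\gamma f_j}=\overline f_j+c(\gamma)$ with $c$ a homomorphism. Then removing the torsion of $c(\Gamma')$, and using that a dense group of rotations fixes only Haar measure, would give the hereditary Dirac-or-Haar alternative.

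The problem is that Step~2 asserts this linear cocycle relation without deriving it, and it does not follow from the ingredients you cite. Reducing $f=f_0+\sum_if_i$ modulo one and applying $U_\gamma-I$ with $\gamma\in\Gamma_0$ gives
\[
 (U_\gamma-I)\overline f_j=-\sum_{i\ne j}(U_\gamma-I)\overline f_i .
\]
This writes a $\Lambda_j$-invariant function as a sum of functions invariant along the other $\Lambda_i$.
\begin{itemize}
\item For $s\le2$ it does give what you want: for $s=2$ both sides are invariant under the finite-index group $\Lambda_i+\Lambda_j$.
\item For $s\ge3$ it is a relation of exactly the same shape as the one you started from, and ``integrality'' adds nothing.
\end{itemize}
What can actually be extracted comes from annihilating the other terms one at a time with $U_{aw_i}-I$, $w_i\in\Lambda_i$. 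This yields $(U_{h_j}-I)^s\overline f_j=0$ for a transverse $h_j$. So along transverse orbits $\overline f_j$ is a polynomial sequence of degree $<s$, not of degree one.

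The additive relations genuinely allow higher degree. In $\mathbb Z^2$, take directions $e_1,e_2,e_1+e_2,e_1-e_2$ and $\beta$ irrational. The functions $2\beta x_1^2$, $2\beta x_2^2$, $-\beta(x_1+x_2)^2$, $-\beta(x_1-x_2)^2$ each depend on a single $x\mathbin{\cdot}v_i$ and sum to zero, yet none is linear modulo one. Your sketch gives no mechanism that rules this out.

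Your fallback does not close the gap either. The spectral support of $f_j$ in $\Lambda_j^\perp$ encodes only $\Lambda_j$-invariance. It carries no eigenfunction information for the rank-one action of $G/\Lambda_j$.

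\textbf{What is missing.} You need to handle polynomial phases of arbitrary degree rather than eigenfunctions. The paper accepts $(U_{h_j}-I)^s\overline f_j=0$ and applies Weyl's theorem: a real polynomial sequence modulo one is either periodic (all nonconstant coefficients rational) or Haar equidistributed. The irrational alternative persists on every arithmetic progression $t\mapsto bt+c$, because the highest irrational coefficient of degree $k$ becomes $b^k$ times itself plus rational terms. This persistence is what replaces your torsion-removal step and makes the conclusion hereditary in $H$.

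The remaining steps in the paper are as follows.
\begin{itemize}
\item The orbit statistics are identified with $(\overline f_j)_*\mu_E$ via the pointwise ergodic theorem for box averages in finite-index subgroups.
\item A countability argument over finite-index subgroups then produces a single $\Gamma_j$ valid almost everywhere.
\end{itemize}
Your sketch does not address whether the binary nature of $f$ might eventually force degree one, and the paper never needs to settle that question.
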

\begin{proof}
\emph{Step 1: a polynomial formula along transverse orbits.}
Fix $j$. Choose $h_0\in G$ outside every
$\operatorname{span}_{\mathbb Q}\Lambda_i$; this is possible because
a finite union of proper rational subspaces does not cover
$\mathbb Q^d$. For each $i\ne j$, the full-rank assertion in
Lemma~\ref{lem:rational-circle-data} shows that $\Lambda_i$ is not
contained in $\operatorname{span}_{\mathbb Q}\Lambda_j$, so choose
\[
 w_i\in\Lambda_i\setminus\operatorname{span}_{\mathbb Q}\Lambda_j.
\]
The sum $L_j=\Lambda_j\oplus\mathbb Zh_0$ is direct and has full
rank, hence finite index by Smith normal form. Therefore
$\Gamma_0\cap L_j$ has finite index, and the finite quotient
$G/(\Gamma_0\cap L_j)$ has finite exponent. Choose $a\geq1$ such that
\[
 aG\subseteq\Gamma_0\cap L_j.
\]
For circle-valued maps use the same notation $U_gv=v\circ T_{-g}$,
with addition and subtraction in $\mathbb R/\mathbb Z$. Since $f$ is
integer-valued and $f=f_0+\sum_{i=1}^sf_i$, reduction modulo one gives
\[
 \overline f_j=-\overline f_0-
 \sum_{\substack{1\leq i\leq s\\i\ne j}}\overline f_i.
\]
The first factor in the following operator kills $\overline f_0$,
and the factor indexed by $i$ kills $\overline f_i$:
\begin{equation}\label{eq:factor-relation}
 (U_{ah_0}-I)
 \prod_{\substack{1\leq i\leq s\\i\ne j}}
 (U_{aw_i}-I)\overline f_j=0.
\end{equation}
All the factors commute.

For $i\ne j$, write uniquely
$aw_i=b_ih_0+\lambda_i$, with $b_i\in\mathbb Z$ and
$\lambda_i\in\Lambda_j$. Here $b_i\ne0$; replacing $w_i$ by $-w_i$
if needed makes $b_i>0$. This replacement preserves
\eqref{eq:factor-relation}, because the new factor still annihilates
$\overline f_i$. Since $\overline f_j$ is $\Lambda_j$-invariant,
$U_{aw_i}\overline f_j=U_{b_ih_0}\overline f_j$. Let $m$ be a
positive common multiple of $a$ and all the $b_i$, with the latter
list empty when $s=1$. In $\mathbb Z[t]$,
\[
\begin{split}
 (t^m-1)^s
 ={}&(t^a-1)
 \prod_{\substack{1\leq i\leq s\\i\ne j}}(t^{b_i}-1)
 \left(\sum_{\ell=0}^{m/a-1}t^{\ell a}\right)
 \prod_{\substack{1\leq i\leq s\\i\ne j}}
 \left(\sum_{\ell=0}^{m/b_i-1}t^{\ell b_i}\right).
\end{split}
\]
Substituting $U_{h_0}$ and using \eqref{eq:factor-relation}, then
putting $h_j=mh_0$, gives
\begin{equation}\label{eq:polynomial-relation}
 (U_{h_j}-I)^s\overline f_j=0,
 \qquad
 U_\lambda\overline f_j=\overline f_j\quad(\lambda\in\Lambda_j).
\end{equation}
These identities initially hold almost everywhere. Since $G$ and
$\Lambda_j$ are countable, remove their exceptional sets and all
$G$-translates. This gives a $G$-invariant conull set $X_0$ on which
every translated instance of \eqref{eq:polynomial-relation} holds
pointwise.

For $x\in X_0$, $r\in G$, and $t\in\mathbb Z$, let
$u_{x,r}(t)=\overline f_j(T_{r+th_j}x)$. Because
$U_{h_j}-I$ is the backward difference, evaluating its $s$th power
at $T_{r+(t+s)h_j}x$ gives $(-1)^s\Delta^su_{x,r}(t)$, where
$\Delta u(t)=u(t+1)-u(t)$. Thus $\Delta^su_{x,r}=0$ on $\mathbb Z$.
The Newton formula in $\mathbb R/\mathbb Z$ is
\begin{equation}\label{eq:polynomial-sequence}
\begin{split}
 u_{x,r}(t)&=\sum_{k=0}^{s-1}\binom{t}{k}\alpha_{x,r,k}
                   \pmod{\mathbb Z},\qquad t\in\mathbb Z,\\
 \alpha_{x,r,k}&=\sum_{\ell=0}^k(-1)^{k-\ell}
                        \binom{k}{\ell}u_{x,r}(\ell).
\end{split}
\end{equation}
Both sides of the first identity have zero $s$th difference and agree
at $t=0,\ldots,s-1$, so the recurrence proves the formula in both
integer directions. The group
$\Lambda_j\oplus\mathbb Zh_j$ has finite index. Fix a finite set
$R_j$ of representatives for its cosets in $G$. If
$r=r_0+\lambda+ch_j$, with $r_0\in R_j$, $\lambda\in\Lambda_j$, and
$c\in\mathbb Z$, then
$u_{x,r}(t)=u_{x,r_0}(t+c)$. Thus the finitely many sequences indexed
by $R_j$ control all $r\in G$.

\smallskip\noindent\emph{Step 2: orbit averages on all finite-index refinements.}
Fix $x\in X_0$ and $r_0\in R_j$, choose real lifts
$\widetilde\alpha_{x,r_0,k}$ of the coefficients in
\eqref{eq:polynomial-sequence}, and put
\[
 p_{x,r_0}(t)=\sum_{k=0}^{s-1}\binom{t}{k}
                 \widetilde\alpha_{x,r_0,k}\in\mathbb R[t].
\]
If all its nonconstant binomial coefficients are rational, choose a
common denominator $M$. Every positive $Q$ divisible by $M(s-1)!$
satisfies
\[
 \binom{t+Q}{k}-\binom{t}{k}
 =\sum_{r=1}^k\binom{Q}{r}\binom{t}{k-r}\equiv0\pmod M
 \qquad(1\leq k<s),
\]
so $p_{x,r_0}(t)$ modulo one is periodic. Otherwise the invertible
rational change between the binomial and power bases shows that
$p_{x,r_0}$ has an irrational nonconstant power-basis coefficient,
and Weyl's theorem~\cite{Weyl} makes its reduction modulo one Haar
equidistributed. The same alternative holds on every nonconstant
arithmetic subsequence: if the degree-$k$ coefficient is the highest
irrational nonconstant coefficient, then under $t\mapsto bt+c$, with
$b,c\in\mathbb Z$ and $b\ne0$, the new degree-$k$ coefficient is
$b^k$ times that irrational number plus rational contributions from
higher coefficients. Negative $b$ gives the same conclusion for
two-sided averages.

Choose $q_x\geq1$ divisible by the periods of all periodic sequences
among the finitely many $u_{x,r_0}$, $r_0\in R_j$, taking $q_x=1$ if
there are none, and set
\[
 H_x=\Lambda_j\oplus\mathbb Z(q_xh_j).
\]
Let $H\leq H_x$ satisfy $[H_x:H]<\infty$, and define
\[
 \pi\colon H_x\longrightarrow\mathbb Z,
 \qquad \pi(\lambda+nq_xh_j)=n.
\]
Then $K=H\cap\Lambda_j$ has finite index in $\Lambda_j$, while
$\pi(H)=\ell\mathbb Z$ for a unique $\ell\geq1$. Choose
$b_d\in H$ with $\pi(b_d)=\ell$ and write
$b_d=\lambda+\ell q_xh_j$, $\lambda\in\Lambda_j$. Every element of
$H$ differs from a unique multiple of $b_d$ by an element of $K$, so
\[
 H=K\oplus\mathbb Zb_d.
\]
Choose a basis $b_1,\ldots,b_{d-1}$ of $K$. Then
$b_1,\ldots,b_d$ is a basis of $H$. For every $r\in G$ and every
continuous $\varphi\colon\mathbb R/\mathbb Z\to\mathbb C$,
$\Lambda_j$-invariance gives
\begin{equation}\label{eq:adapted-box-average}
\begin{split}
&\frac{1}{(2N+1)^d}\sum_{n\in\{-N,\ldots,N\}^d}
 \varphi\!\left(\overline f_j\!\left(
 T_{r+\sum_{k=1}^dn_kb_k}x\right)\right)\\
&\hspace{35mm}=
 \frac{1}{2N+1}\sum_{t=-N}^N
 \varphi\bigl(\overline f_j(T_{r+t\ell q_xh_j}x)\bigr).
\end{split}
\end{equation}
Writing $r=r_0+\lambda_0+t_0h_j$ reduces the sequence on the right to
$u_{x,r_0}(t_0+t\ell q_x)$. In the periodic case the choice of $q_x$
makes this sequence constant; otherwise it is Haar equidistributed.
Thus the empirical measures in \eqref{eq:adapted-box-average}
converge weakly to a Dirac mass or Haar measure.

We next identify these orbit limits with conditional laws on one
common conull set. Fix a finite-index subgroup $H\leq G$, an ordered
basis $b=(b_1,\ldots,b_d)$ of $H$, and a bounded measurable function
$\psi$. By Lemma~\ref{lem:components}, modulo null sets $X$ is the
disjoint union of finitely many positive-measure $H$-ergodic
components $E'$. On each such $E'$, the normalized measure
$\mu_{E'}$ is an $H$-invariant ergodic probability measure.

We construct the compact metric system needed for the pointwise
theorem rather than applying that theorem directly to the original
measurable system. Choose $C\geq\lVert\psi\rVert_\infty$ and a
measurable representative of $\psi$ with values in the closed
complex disk $D_C$. Since $H$ is countable, $D_C^H$ is compact and
metrizable. Define the measurable orbit-name map
\[
 \iota_\psi\colon X\longrightarrow D_C^H,
 \qquad \iota_\psi(y)(h)=\psi(T_hy),
\]
and let $K_\psi$ be the closure of $\iota_\psi(X)$. For $g\in H$,
define the continuous shift
\[
 (S_gz)(h)=z(h+g).
\]
Then $S_g\iota_\psi(y)=\iota_\psi(T_gy)$. Hence $K_\psi$ is
$H$-invariant, the shifts give a continuous $H$-action on the compact
metric space $K_\psi$, and
\[
 \nu_{E'}=(\iota_\psi)_*\mu_{E'}
\]
is an invariant Borel probability measure. It is ergodic: the
preimage under $\iota_\psi$ of any invariant Borel set is invariant
modulo $\mu_{E'}$.

Let $e_0\colon K_\psi\to\mathbb C$ be the continuous coordinate map
$e_0(z)=z(0)$. The boxes
\[
 \mathcal F_N(b)=\left\{\sum_{k=1}^dn_kb_k:
                  n_k\in\{-N,\ldots,N\}\right\}
\]
form a tempered F\o lner sequence in $H$: translation by a fixed
element changes only $O(N^{d-1})$ points, while
\[
 \bigcup_{M<N}\bigl(-\mathcal F_M(b)+\mathcal F_N(b)\bigr)
 \subseteq\mathcal F_{2N}(b),
 \qquad
 |\mathcal F_{2N}(b)|\leq2^d|\mathcal F_N(b)|.
\]
The pointwise ergodic theorem~\cite{Lindenstrauss}, applied to
$(K_\psi,\nu_{E'})$ and $e_0$, and then pulled back by $\iota_\psi$,
gives, for $\mu_{E'}$-almost every $y\in E'$,
\begin{equation}\label{eq:compact-factor-average}
 \lim_{N\to\infty}\frac{1}{(2N+1)^d}
 \sum_{n\in\{-N,\ldots,N\}^d}
 \psi\!\left(T_{\sum_{k=1}^dn_kb_k}y\right)
 =\int_{E'}\psi\,d\mu_{E'}.
\end{equation}
On the other hand, $P_H\psi$ is $H$-invariant and therefore constant
almost everywhere on each $H$-ergodic component $E'$. The defining
identity for conditional expectation shows that this constant is
$\int_{E'}\psi\,d\mu_{E'}$.

Take $\psi=\varphi\circ\overline f_j$ for $\varphi$ in a fixed
countable uniformly dense subset of
$C(\mathbb R/\mathbb Z)$. There are only countably many finite-index
subgroups of $G$, their ordered bases, and elements $r\in G$.
Intersect the conull sets from \eqref{eq:compact-factor-average} over
all these choices and their $G$-translates, and also intersect with
$X_0$. We obtain a $G$-invariant conull set $X_*\subseteq X_0$ such
that, for every $x\in X_*$, every such $H,b,r$, and every continuous
$\varphi$,
\begin{equation}\label{eq:conditional-orbit-law}
\begin{split}
&\lim_{N\to\infty}\frac{1}{(2N+1)^d}
 \sum_{n\in\{-N,\ldots,N\}^d}
 \varphi\!\left(\overline f_j\!\left(
 T_{r+\sum_{k=1}^dn_kb_k}x\right)\right)\\
&\hspace{35mm}=P_H(\varphi\circ\overline f_j)(T_rx).
\end{split}
\end{equation}
Uniform approximation extends the identity from the chosen dense
family to every continuous $\varphi$. In particular, the limiting
distribution does not depend on the chosen basis of $H$.

\smallskip\noindent\emph{Step 3: a single subgroup valid almost everywhere.}
Fix an ordered basis $b(H')$ for each finite-index subgroup
$H'\leq G$, and write
\[
 \eta^{H',r}_{N,x}
 =\frac{1}{(2N+1)^d}\sum_{n\in\{-N,\ldots,N\}^d}
 \delta_{\overline f_j(T_{r+\sum_{k=1}^dn_kb_k(H')}x)}.
\]
Let $m_{\mathbb T}$ be Haar probability measure on
$\mathbb T=\mathbb R/\mathbb Z$, and put
\[
 \mathcal D=\{m_{\mathbb T}\}\cup\{\delta_z:z\in\mathbb T\}.
\]
The map $z\mapsto\delta_z$ is continuous for the weak topology.
Since $\mathbb T$ is compact, $\mathcal D$ is compact and hence
closed in the space of probability measures on $\mathbb T$.

For a finite-index subgroup $H\leq G$, let $Y_H\subseteq X_*$ be the
set of $x$ such that, for every $H'\leq H$ with $[H:H']<\infty$ and
every $r\in G$, the measures $\eta^{H',r}_{N,x}$ converge weakly to
a member of $\mathcal D$. The set $Y_H$ is measurable: in a
compatible metric on the probability-measure space, the condition
says that each sequence is Cauchy and its distance from the closed
set $\mathcal D$ tends to zero, and there are only countably many
pairs $(H',r)$. It is $G$-invariant because
\[
 \eta^{H',r}_{N,T_gx}=\eta^{H',r+g}_{N,x}.
\]

For every $x\in X_*$, Step~2 supplies the finite-index subgroup
$H_x$. If $H'\leq H_x$ has finite index, the adapted basis in
\eqref{eq:adapted-box-average} gives a limit in $\mathcal D$, while
\eqref{eq:conditional-orbit-law} shows that the fixed-basis averages
defining $\eta^{H',r}_{N,x}$ have the same limit. Hence $x\in Y_{H_x}$
and
\[
 X_*\subseteq\bigcup_{[G:H]<\infty}Y_H.
\]
There are countably many finite-index subgroups, so some $Y_H$ has
positive measure. Ergodicity and $G$-invariance make it conull; call
this subgroup $\Gamma_j$.

Let $H'\leq\Gamma_j$ satisfy $[\Gamma_j:H']<\infty$, and let $E$ be
an $H'$-ergodic component. By Lemma~\ref{lem:components}, $E$ has
positive measure. For almost every $x\in E\cap Y_{\Gamma_j}$, the
measures $\eta^{H',0}_{N,x}$ converge to some $\nu_x\in\mathcal D$.
For every continuous $\varphi\colon\mathbb T\to\mathbb C$,
\eqref{eq:conditional-orbit-law} and $H'$-ergodicity give
\[
\begin{split}
 \int_{\mathbb T}\varphi\,d\nu_x
 &=P_{H'}(\varphi\circ\overline f_j)(x)\\
 &=\int_E\varphi(\overline f_j(y))\,d\mu_E(y)\\
 &=\int_{\mathbb T}\varphi\,d\bigl((\overline f_j)_*\mu_E\bigr).
\end{split}
\]
Thus $\nu_x=(\overline f_j)_*\mu_E$, which belongs to $\mathcal D$.
This proves \eqref{eq:hereditary}; the index $j$ was arbitrary.
\end{proof}

\subsection{The conditional-density dichotomy}

We now use the bounds $0\leq f_i\leq1$. A Haar law will force
conditional density $1/2$; Dirac laws will yield a period group.
The argument extends \cite[Theorem~4.4]{Bhattacharya} and
\cite[Lemma~C.15]{Khetan} to arbitrary dimension.

\begin{theorem}[Conditional-density dichotomy]\label{thm:dichotomy}
Let $d\geq2$, let $G=\mathbb Z^d$ act ergodically by
probability-preserving transformations on a standard probability
space $(X,\mu)$, let $D\subseteq X$ be measurable, and put $f=1_D$.
Suppose $\sigma_f$ is supported on finitely many rational affine
circles. Then there is a finite-index subgroup $\Gamma\leq G$ such
that, on each $\Gamma$-ergodic component $E$, either
$\mu_E(D)=1/2$ or $D\cap E$ is $(d-1)$-periodic, or both.
\end{theorem}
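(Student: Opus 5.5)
The plan is to take $\Gamma$ to be the intersection $\Gamma=\Gamma_0\cap\bigcap_{j=1}^s\Gamma_j$. Here $\Gamma_0$ comes from Lemma~\ref{lem:periodic-correction} and the $\Gamma_j$ from Lemma~\ref{lem:conditional-laws}. I will shrink $\Gamma$ further by finitely many finite-index subgroups $\Gamma_{i,a}$ from \eqref{eq:periodic-correction} where needed. By the hereditary form \eqref{eq:hereditary}, every such shrinking keeps the Haar/Dirac alternative for each $(\overline f_j)_*\mu_E$ on every $\Gamma$-ergodic component $E$. The key structural identity, read off in the cyclic model $W_f$, is
\[
 W_f\bigl(P_{\Lambda_j}(f-f_j)\bigr)=1_{A_j}\bigl(1-1_{A_j}\bigr)=0,
 \qquad A_j=\Lambda_j^\perp.
\]
Hence $P_{\Lambda_j}(f-f_j)=0$ and $f_j=P_{\Lambda_j}f$ is the full $\Lambda_j$-conditional density of $D$. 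I will also use two periodicity facts. First, $f_0$ is $\Gamma$-invariant, so it equals a constant $c_0$ a.e.\ on each $\Gamma$-ergodic $E$. Second, for $i\ne j$ the function $P_{\Lambda_j}f_i$ has multiplier $1_{A_i\cap A_j}$, which is supported on the finite torsion set $(\Lambda_i+\Lambda_j)^\perp$. So by the kernel argument in the proof of Lemma~\ref{lem:periodic-correction} it is invariant under a finite-index subgroup, which I absorb into $\Gamma$; it is then constant on each $E$.

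\emph{Haar case.} Suppose that for some $j$ the law $(\overline f_j)_*\mu_E$ is Haar. Since $0\le f_j\le1$ by Lemma~\ref{lem:rational-circle-data}, the lift $f_j$ itself is uniformly distributed on $[0,1]$ under $\mu_E$, and so $\int_E f_j\,d\mu_E=1/2$. The set $E$ is invariant under $\Gamma\cap\Lambda_j$. Because $\Gamma\cap\Lambda_j$ has finite index in $\Lambda_j$, conditioning on $\Lambda_j$ can be compared with conditioning on $\Gamma\cap\Lambda_j$ via $P_{a\Lambda_j}f-f_j$, which is $\Gamma$-periodic by \eqref{eq:periodic-correction} and thus constant on $E$. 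Using this, I want to deduce
\[
 \mu_E(D)=\int_E f\,d\mu_E=\int_E P_{\Lambda_j}f\,d\mu_E=\int_E f_j\,d\mu_E=\frac12 .
\]
The bookkeeping of that constant is where care is needed: I expect it to vanish by integrating the identity $P_{\Lambda_j}(f-f_j)=0$ against $1_E$. If it does not vanish, I would replace $\Lambda_j$ by $a\Lambda_j\le\Gamma$ from the start, noting that Lemma~\ref{lem:conditional-laws} was proved in a form robust under this replacement.

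\emph{All-Dirac case.} This is the main obstacle. Suppose each $\overline f_j$ is a.e.\ a constant $z_j$ on $E$. Then $f_j$ is constant on $E$ when $z_j\ne0$, and $f_j=1_{B_j}$ with $B_j$ a $\Lambda_j$-invariant set when $z_j=0$. On $E$ we therefore get $f=c+\sum_{j\in J}1_{B_j}$ for a constant $c$ and some index set $J$. I will show that at most one $B_j$ is nontrivial on $E$. To do this, project the idempotence $f=f^2$ onto $\Lambda_{j}$-invariants for $j\in J$. By the second periodicity fact, $P_{\Lambda_j}1_{B_i}$ is constant on $E$ for $i\ne j$. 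Using $P_{\Lambda_j}(1_{B_j}1_{B_i})=1_{B_j}P_{\Lambda_j}1_{B_i}$, we get an identity
\[
 1_{B_j}=\alpha+\beta\,1_{B_j}
\]
with constants $\alpha,\beta$ on $E$. Matching this with $f_j=P_{\Lambda_j}f$ should force either $B_j$ to be trivial on $E$ or all the other $B_i$ to be trivial. With a single surviving $B_j$ (or none), $D\cap E$ coincides a.e.\ with $B_j\cap E$ (or with $\varnothing$ or $E$). This set is invariant under the rank-$(d-1)$ group $\Lambda_j\cap\Gamma$, so $D\cap E$ is $(d-1)$-periodic. If the linear-algebra step fails, the fallback is to work with the square of $\sum_{j}1_{B_j}$ directly and use the spectral disjointness of the $\Lambda_j$-pieces modulo the finite torsion sets $(\Lambda_i+\Lambda_j)^\perp$.
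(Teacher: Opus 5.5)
Both cases have genuine gaps.

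\emph{Haar case.} Your ``key structural identity'' is a tautology, since $f_j$ is \emph{defined} as $P_{\Lambda_j}f$. It cannot yield $\int_E f\,d\mu_E=\int_E P_{\Lambda_j}f\,d\mu_E$, because $E$ is only $\Gamma$-invariant, not $\Lambda_j$-invariant. Integrating $P_{\Lambda_j}(f-f_j)=0$ against $1_E$ therefore gives nothing.

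What you actually need is a subgroup $a\Lambda_j\le\Gamma$. Then $1_E$ is $\mathcal I_{a\Lambda_j}$-measurable and $\mu_E(D)=\int_E P_{a\Lambda_j}f\,d\mu_E$, and you must show that $g=P_{a\Lambda_j}f-f_j$ vanishes on $E$.

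Your plan to make $g$ constant on $E$ by absorbing $\Gamma_{j,a}$ into $\Gamma$ needs $a\Lambda_j\le\Gamma$ and $\Gamma\le\Gamma_{j,a}$ at once, for every $j$. These requirements pull in opposite directions as $a$ grows, and you give no way to reconcile them. The fallback of replacing $\Lambda_j$ by $a\Lambda_j$ from the start rebuilds $\Gamma_0$ and $\Gamma_j$, and nothing guarantees that the new $\Gamma$ contains the new $\Lambda_j$.

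More importantly, even a single constant $c$ on $E$ only gives $\mu_E(D)=1/2+c$. You have no mechanism forcing $c=0$. The paper's missing idea runs as follows:
\begin{enumerate}
\item Fix $\Gamma$ and choose $a$ with $aG\subseteq\Gamma$.
\item Split $E$ into the ergodic components $E_t$ of $\Gamma\cap\Gamma_{j,a}$; on each, $g=c_t$ is constant.
\item By \eqref{eq:hereditary}, each $(\overline f_j)_*\mu_{E_t}$ is Haar or Dirac. Their positive mixture is the atomless Haar law, so all of them are Haar, and $f_j$ is uniform on $[0,1]$ on every $E_t$.
\item The bounds $0\le f_j+c_t=P_{a\Lambda_j}f\le1$ then force $c_t=0$.
\end{enumerate}
So the uniform law is what kills the correction constant; you use it only to compute the mean $1/2$.

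\emph{All-Dirac case.} The idempotence argument is not sound as written. $P_{\Lambda_j}$ does not commute with restriction to $E$: the $\Lambda_j$-orbit of $E$ consists of several $\Gamma$-components, on which the decomposition $f=c+\sum_{i\in J}1_{B_i}$ may have different constants and index sets. So the identity $1_{B_j}=\alpha+\beta\,1_{B_j}$ is not derived, and the ``should force'' step is left unproved.

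It is also unnecessary. Suppose $(\overline f_j)_*\mu_E=\delta_0$, and set $A_0=\{f_j=0\}$ and $A_1=\{f_j=1\}$. These are global $\Lambda_j$-invariant sets with $E\subseteq A_0\cup A_1$ a.e. Because $f_j=P_{\Lambda_j}1_D$, you get
$\int_{A_0}f\,d\mu=\int_{A_0}f_j\,d\mu=0$ and $\int_{A_1}(1-f)\,d\mu=\int_{A_1}(1-f_j)\,d\mu=0$. Hence $f=f_j$ a.e.\ on $A_0\cup A_1$.

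Thus $1_{D\cap E}=1_Ef_j$ is invariant under the rank-$(d-1)$ group $\Gamma\cap\Lambda_j$, whatever the other $f_i$ do; there is no need to show the other $B_i$ are trivial. If no Dirac mass sits at $0$, then every $f_i$ and $f_0$ is constant on $E$. So $f$ is constant there, and $D\cap E$ is null or conull in $E$.
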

\begin{proof}
Take $\Gamma=\Gamma_0\cap\bigcap_{j=1}^s\Gamma_j$, with the groups from
Lemmas~\ref{lem:periodic-correction} and
\ref{lem:conditional-laws}, and fix a $\Gamma$-ergodic component
$E$. Lemma~\ref{lem:conditional-laws} gives the Dirac-or-Haar
alternative on every ergodic component of every finite-index
subgroup of $\Gamma$; this stability will be used below.

\emph{Case 1: some conditional law is Haar.}
Suppose $(\overline f_i)_*\mu_E$ is Haar probability measure on
$\mathbb R/\mathbb Z$. By Lemma~\ref{lem:rational-circle-data},
$0\leq f_i\leq1$, and therefore $(f_i)_*\mu_E$ is uniform probability
measure on $[0,1]$. Indeed, reduction modulo one is injective on
$(0,1)$, while the two endpoints have the same image and that point
has Haar measure zero.

Choose $a\geq1$ with $aG\subseteq\Gamma$. By
Lemma~\ref{lem:periodic-correction},
$g=P_{a\Lambda_i}f-f_i$ has a finite-index period subgroup $\Pi$.
Let $E_1,\ldots,E_m$ be the ergodic components of
$\Delta=\Gamma\cap\Pi$ contained in $E$. On each $E_t$, the
$\Delta$-invariant function $g$ is a constant $c_t$, and
\eqref{eq:hereditary} applies because $\Delta\leq\Gamma_i$. If
$\nu_t=(\overline f_i)_*\mu_{E_t}$, then
\[
 (\overline f_i)_*\mu_E
 =\sum_{t=1}^m\frac{\mu(E_t)}{\mu(E)}\nu_t.
\]
Every coefficient is positive. A Dirac constituent would give the
left side an atom, so all the $\nu_t$ are Haar. Consequently $f_i$
is uniform on $[0,1]$ on every $E_t$. Since
\[
 0\leq f_i+c_t=P_{a\Lambda_i}f\leq1
\]
almost everywhere by positivity and unitality of conditional
expectation, the essential endpoints give $c_t\geq0$ and
$1+c_t\leq1$, hence $c_t=0$. Thus
$P_{a\Lambda_i}f=f_i$ on $E$. Finally,
$a\Lambda_i\leq\Gamma$ makes $1_E$ measurable with respect to
$\mathcal I_{a\Lambda_i}$, and conditional expectation gives
\[
 \mu_E(D)=\frac{1}{\mu(E)}\int1_Ef\,d\mu
 =\frac{1}{\mu(E)}\int1_EP_{a\Lambda_i}f\,d\mu
 =\int_Ef_i\,d\mu_E=\frac12.
\]

\smallskip\noindent\emph{Case 2: all conditional laws are Dirac.}
Suppose first that $(\overline f_i)_*\mu_E=\delta_0$ for some $i$.
Then $E\subseteq A_0\cup A_1$ modulo null sets, where
$A_0=\{f_i=0\}$ and $A_1=\{f_i=1\}$. These sets belong to
$\mathcal I_{\Lambda_i}$, so
\[
 \int_{A_0}f\,d\mu=\int_{A_0}f_i\,d\mu=0,
 \qquad
 \int_{A_1}(1-f)\,d\mu
 =\int_{A_1}(1-f_i)\,d\mu=0.
\]
The integrands are nonnegative. Hence $f=f_i$ on $A_0\cup A_1$, and
$1_{D\cap E}=1_Ef_i$ is invariant under
$\Gamma\cap\Lambda_i$. Since $\Gamma$ has finite index in $G$, the
quotient $\Lambda_i/(\Gamma\cap\Lambda_i)$ injects into $G/\Gamma$.
Thus $\Gamma\cap\Lambda_i$ has finite index in $\Lambda_i$, hence
rank $d-1$, and $D\cap E$ is $(d-1)$-periodic.

Otherwise every Dirac mass is at a nonzero point of
$\mathbb R/\mathbb Z$. Its unique representative in $(0,1)$ is the
almost-sure value of $f_i$ on $E$. Also $f_0$ is constant on $E$,
since $\Gamma\leq\Gamma_0$. Thus
$f=f_0+\sum_{i=1}^sf_i$ is constant on $E$. As $f$ is binary,
$D\cap E$ is either empty or all of $E$ modulo null sets. Hence
$D\cap E$ is $\Gamma$-invariant modulo null sets. Since $\Gamma$ has
rank $d$, it contains a rank-$(d-1)$ subgroup, so $D\cap E$ is
$(d-1)$-periodic.
\end{proof}

\subsection{Periodic replacement of half-density components}

The conditional-density dichotomy leaves components on which the
coordinate event has density $1/2$. We first establish closure under
relative complements, then use the tiling equation to pair the
remaining translates. Compare~\cite[Lemmas~C.16--C.17]{Khetan}.

\begin{lemma}[Relative complements]\label{lem:complement}
Let $d\geq1$, set $G=\mathbb Z^d$, and let $G$ act ergodically by
probability-preserving transformations on $(X,\mathcal B,\mu)$. If a
measurable $d$-periodic set $E$ contains a piecewise
$(d-1)$-periodic set $B$, then $E\setminus B$ is piecewise
$(d-1)$-periodic.
\end{lemma}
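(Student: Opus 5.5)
The approach is to decompose $E$ into pieces on which both $E$ and the pieces of $B$ share a common rank-$(d-1)$ period group, and then take complements piece by piece. Write $B=B_1\sqcup\cdots\sqcup B_n$ with $B_k$ invariant (mod null sets) under a rank-$(d-1)$ subgroup $H_k$, and let $\Gamma_E$ be a rank-$d$ subgroup with $E$ invariant under it. Replacing $H_k$ by $H_k\cap\Gamma_E$, which still has rank $d-1$ because $\Gamma_E$ has finite index in $G$, we may assume each $H_k\leq\Gamma_E$.

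A naive complement fails because the $H_k$ differ, so $E\setminus B$ is not invariant under any single one of them. The plan is to induct on $n$, using a common refinement by translates. The key step is the case $n=1$: if $B_1\subseteq E$ is $H_1$-invariant with $H_1\leq\Gamma_E$, then $E\setminus B_1$ is $H_1$-invariant modulo null sets, since both $E$ and $B_1$ are. Hence $E\setminus B_1$ is $(d-1)$-periodic, and in particular piecewise so.

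For general $n$, the obstacle is that after removing $B_1$, the remaining set $E':=E\setminus B_1$ is only $(d-1)$-periodic, not $d$-periodic, so the induction hypothesis in its stated form no longer applies. To get around this, strengthen the statement being proved: if $E$ is invariant under a subgroup $\Pi$ and $B\subseteq E$ is a finite disjoint union of sets $B_k$ invariant under subgroups $H_k$ with $\operatorname{rank}(H_k\cap\Pi)\geq d-1$, then $E\setminus B$ is piecewise $(d-1)$-periodic. But $E'$ is invariant only under $H_1$, and $H_1\cap H_2$ may have rank $d-2$, so this strengthening does not close the induction either.

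The remedy is to use finitely many translates. Put $\Gamma'=\Gamma_E\cap\bigcap_k(H_k+\Gamma_E)$ (a formality), and consider the finite group $\Gamma_E/\Gamma''$ with $\Gamma''=\bigcap_k(H_k+m\Gamma_E)$. The essential tool is an averaging or counting identity: $1_{E\setminus B}=1_E-\sum_k1_{B_k}$. The main obstacle is expressing a difference of periodic indicators as a disjoint union of periodic sets. I would handle it by the atom decomposition. Consider the finite Boolean algebra generated by the sets $T_gB_k$ for $g$ ranging over coset representatives of $\Gamma_E/(H_k+N\Gamma_E)$, together with $E$. Its atoms inside $E\setminus B$ are intersections of the form $E\cap\bigcap_k(\text{$B_k$-translates or complements})$. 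Each complement $E\setminus T_gB_k$ is $H_k$-invariant, but intersections mix different $H_k$. So instead I would use the following fact: an atom that lies outside all the $B_k$ but inside $E$ can be written as $E\setminus\bigcup_kB_k$, and I would prove directly that $E\setminus\bigcup_k B_k=\bigsqcup_k\bigl(B_k^\ast\bigr)$ for suitable translates $B_k^\ast$.

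I expect this last step to be the real difficulty, and I am not confident in it. If the lemma is used only in the setting of tiling partitions, where $E$ is itself a disjoint union of $G$-translates $T_uD'$ of the pieces, I would fall back on exploiting that structure: show that the complement is a union of translates of the given pieces, each of which is $(d-1)$-periodic.
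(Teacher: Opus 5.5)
Your reduction to $H_k\leq\Gamma_E$ and your $n=1$ case are correct, but you never establish the general case, and the step you propose for it is false. An identity $E\setminus\bigcup_kB_k=\bigsqcup_kB_k^\ast$, with each $B_k^\ast$ a translate of $B_k$, would force $\mu(E\setminus B)=\mu(B)$ by measure preservation. That already fails for $n=1$ with $E=X$ and $\mu(B_1)\neq1/2$. The atom decomposition runs into exactly the obstruction you noticed: atoms that mix different $B_k$ are only invariant under $\bigcap_kH_k$, which can have rank $d-2$. The fallback to the tiling structure is circular. In Proposition~\ref{prop:replacement} the lemma is invoked precisely to show that the leftover term $T_uD_{T_{-u}E}$ is piecewise $(d-1)$-periodic. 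A telling symptom is that you never use the hypothesis that the $G$-action is ergodic; that is the missing ingredient.

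The paper first merges pieces whose period groups have the same rational span. Such groups are commensurable, so their intersection still has rank $d-1$. After merging, $L_i+L_j$ has full rank for $i\neq j$. Let $P$ be a rank-$d$ period group of $E$. Then $\Delta=P\cap\bigcap_{i<j}(L_i+L_j)$ has finite index, so by ergodicity and Lemma~\ref{lem:components} the set $E$ is a finite union of $\Delta$-ergodic components $Q$.

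Suppose $\mu(Q\cap B_j)>0$ and $i\neq j$. The saturation $\bigcup_{h\in L_i+L_j}T_h(Q\cap B_j)$ is $\Delta$-invariant, so it covers $Q$ up to a null set. Writing $h=h_i+h_j$ with $h_i\in L_i$ and $h_j\in L_j$ gives, up to null sets, $B_i\cap T_h(Q\cap B_j)\subseteq T_{h_i}(B_i\cap B_j)=\varnothing$. Hence $Q\cap B_i$ is null.

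So each component meets at most one piece in positive measure. Then $Q\setminus B$ is either $Q$ itself, invariant under $\Delta$, or $Q\setminus B_j$, invariant under the rank-$(d-1)$ group $\Delta\cap L_j$. This is your $n=1$ argument, applied componentwise.
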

\begin{proof}
Write $B=\bigsqcup_{i=1}^mB_i$, with rank-$(d-1)$ period groups
$L_i$. All set identities in this proof are understood modulo null
sets.

\emph{Step 1: separate the period directions.}
Group the $B_i$ according to the rational spans
$L_i\otimes\mathbb Q$. Within each group, replace the pieces by their
union and their period groups by their intersection. Subgroups with
the same rational span are commensurable, so this intersection still
has rank $d-1$ and preserves the union. We may therefore assume that
the spaces $L_i\otimes\mathbb Q$ are pairwise distinct. Two distinct
hyperplanes span $\mathbb Q^d$, so $L_i+L_j$ has full rank and hence
finite index for $i\ne j$.

Choose a rank-$d$ period subgroup $P$ of $E$, and put
\[
 \Delta=P\cap\bigcap_{1\leq i<j\leq m}(L_i+L_j),
\]
with an empty intersection interpreted as $G$. This subgroup has
finite index. By Lemma~\ref{lem:components}, $E$ is a union of
finitely many $\Delta$-ergodic components.

\smallskip\noindent\emph{Step 2: only one period direction survives on each component.}
Let $Q\subseteq E$ be such a component, and suppose
$\mu(Q\cap B_j)>0$. For $i\ne j$, consider
\[
 \mathcal S=\bigcup_{h\in L_i+L_j}T_h(Q\cap B_j).
\]
Since $\Delta\subseteq L_i+L_j$, the set $\mathcal S$ is
$\Delta$-invariant. Its intersection with $Q$ has positive measure,
so ergodicity on $Q$ gives $Q\subseteq\mathcal S$. For
$h=h_i+h_j$, with $h_i\in L_i$ and $h_j\in L_j$,
\[
 B_i\cap T_h(Q\cap B_j)
 \subseteq B_i\cap T_{h_i}B_j
 =T_{h_i}(B_i\cap B_j)=\varnothing.
\]
The defining union is countable, hence $B_i\cap\mathcal S$ is null
and $\mu(Q\cap B_i)=0$. Thus at most one $B_i$ meets $Q$ in positive
measure.

If no $B_i$ does, then $Q\setminus B=Q$ is $\Delta$-invariant.
Otherwise $Q\setminus B=Q\setminus B_j$ is invariant under
$\Delta\cap L_j$. The quotient $L_j/(\Delta\cap L_j)$ injects into
$G/\Delta$, so this intersection has finite index in $L_j$ and
therefore rank $d-1$. In the first case, the full-rank group $\Delta$
contains a rank-$(d-1)$ subgroup. Taking relative complements on the
finitely many disjoint components $Q$ proves the assertion.
\end{proof}

The next proposition carries out the replacement on a single orbit.
The finite component partition determines a periodic graph, and
Lemma~\ref{lem:color} makes all its paired choices simultaneously.

\begin{proposition}[Replacement of half-density components]\label{prop:replacement}
Let $d\geq1$, set $G=\mathbb Z^d$, and let $F\subseteq G$ be a
nonempty finite set whose tiling space $X_F$ is nonempty. Let $\mu$
be a $G$-invariant Borel probability measure on $X_F$ for which the
shift action is ergodic, and let
\[
 D=\{a\in X_F:a(0)=1\}.
\]
Suppose there is a finite-index subgroup $\Gamma\leq G$ such that,
for every $\Gamma$-ergodic component $E$, either $D\cap E$ is
piecewise $(d-1)$-periodic or $\mu_E(D)=1/2$ (or both). Then there is
a piecewise $(d-1)$-periodic set $A'\subseteq G$ such that
$F\oplus A'=G$.
\end{proposition}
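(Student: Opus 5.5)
The plan is to work with the finitely many $\Gamma$-ergodic components and use the tiling partition $X_F=\bigsqcup_{u\in F}T_uD$ of Lemma~\ref{lem:coordinate-identities} to see how $D$ is distributed among translates of components. I would then transfer everything to a single generic configuration $x_0\in X_F$, and finally replace the exceptional part by a periodic two-coloring via Lemma~\ref{lem:color}, exactly as in Proposition~\ref{prop:split}.

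\emph{Step 1: the local structure on a component.} By Lemma~\ref{lem:components}, up to null sets $X_F=E_1\sqcup\cdots\sqcup E_m$ with the $E_k$ $\Gamma$-ergodic, permuted by $G$, and of equal measure. Call $E$ \emph{ordinary} if $D\cap E$ is piecewise $(d-1)$-periodic and \emph{exceptional} otherwise; by hypothesis every exceptional $E$ has $\mu_E(D)=1/2$.

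Fix a component $E$. Intersecting the tiling partition with $E$ gives
\[
 E=\bigsqcup_{u\in F}T_u\bigl(D\cap T_{-u}E\bigr),
\]
where each $T_{-u}E$ is again a component. Since $T_u$ preserves $\mu$ and all components have the same mass, the densities satisfy $\sum_{u\in F}\mu_{T_{-u}E}(D)=1$.

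Translates of piecewise $(d-1)$-periodic sets are piecewise $(d-1)$-periodic, because $G$ is abelian. So the union $B$ of the pieces with $T_{-u}E$ ordinary is piecewise $(d-1)$-periodic inside the $\Gamma$-invariant (hence $d$-periodic) set $E$. Lemma~\ref{lem:complement} then makes $E\setminus B$ piecewise $(d-1)$-periodic.

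Suppose exactly one translate $T_{-u_1}E$ is exceptional. Then $T_{u_1}(D\cap T_{-u_1}E)=E\setminus B$ is piecewise periodic, so $D\cap T_{-u_1}E$ is too, which is a contradiction. The densities sum to one and each exceptional translate contributes $1/2$, so there are zero or exactly two exceptional translates $u_1,u_2$. In the latter case the ordinary pieces have total density $0$. Hence they are null, and a.e.\ point of $E$ lies in exactly one of $T_{u_1}D$, $T_{u_2}D$.

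\emph{Step 2: pointwise realization.} All the statements above are countably many ``modulo null sets'' identities: $\Gamma$-invariance of each $E_k$, periodicity of each piece of $D\cap E_k$ for ordinary $E_k$, and the nullity assertions of Step~1. Intersecting their conull sets over all $G$-translates yields a $G$-invariant conull set, from which I choose $x_0$. I write $a=x_0$, so that $a(g)=1_D(T_gx_0)$.

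The label map $g\mapsto k(g)$, defined by $T_gx_0\in E_{k(g)}$, is $\Gamma$-periodic. On each ordinary class $\{g:k(g)=k\}$ the set $\{a=1\}$ is a finite union of pieces invariant under rank-$(d-1)$ groups. Let $V=\{g:E_{k(g)}\text{ exceptional}\}$; this is $\Gamma$-invariant.

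For each $y$ whose component has two exceptional translates $u_1,u_2$, I join $y-u_1$ to $y-u_2$ by an edge. This edge set is $\Gamma$-invariant, because the pair $\{u_1,u_2\}$ depends only on the component of $y$. Step~1 gives $a(y-u_1)+a(y-u_2)=1$ at every such $y$, while $a$ vanishes on the ordinary sites $y-u$ for $u\ne u_1,u_2$. Thus $a|_V$ is a proper two-coloring.

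\emph{Step 3: replacement.} Lemma~\ref{lem:color} gives a proper coloring $\beta$ of $V$ with a finite-index period subgroup. I set $A'=\{g\notin V:a(g)=1\}\cup\{g\in V:\beta(g)=1\}$.

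To check $F\oplus A'=G$ at a point $y$, there are two cases. If the component of $y$ has no exceptional translates, all sites $y-u$ are ordinary and $a$ is unchanged there. If it has two, the ordinary sites carry $0$ and the edge $\{y-u_1,y-u_2\}$ carries exactly one $1$ under $\beta$. Finally, $A'$ is a finite disjoint union of ordinary pieces together with the fully periodic set $\beta^{-1}(1)$; the latter contains rank-$(d-1)$ periods. Hence $A'$ is piecewise $(d-1)$-periodic.

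I expect the main obstacle to be Step~2. One must arrange that the measure-theoretic piecewise periodicity of the ordinary sets and the a.e.\ exclusivity on exceptional components hold simultaneously and pointwise along the whole orbit of one configuration. This requires choosing representatives of the period groups and of the pieces so that only countably many null sets must be discarded.
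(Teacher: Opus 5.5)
Your proposal is correct and follows the paper's proof essentially step for step. The shared steps are: the density count on the tiling partition restricted to a component; Lemma~\ref{lem:complement} to rule out exactly one exceptional translate; pointwise realization along a generic orbit after discarding countably many translated null sets; and the $\Gamma$-periodic graph on the exceptional sites, colored periodically by Lemma~\ref{lem:color}. The only differences are cosmetic: the paper names the index set $I(E)$ and intersects the period groups of the ordinary pieces with $\Gamma$, and your argument needs neither.
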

\begin{proof}
Let $\mathcal E$ be the finite set of $\Gamma$-ergodic components
and write $D_E=D\cap E$. Call $E$ \emph{ordinary} if $D_E$ is
piecewise $(d-1)$-periodic, and \emph{exceptional} otherwise. Every
exceptional component has density $1/2$; an ordinary component may
have that density as well.

\emph{Step 1: each component has zero or two exceptional translates.}
For $E\in\mathcal E$, define
$I(E)=\{u\in F:T_{-u}E\text{ is exceptional}\}$. Restricting the
tiling partition from Lemma~\ref{lem:coordinate-identities} to $E$
gives
\begin{equation}\label{eq:partition}
 E=\bigsqcup_{u\in F}T_uD_{T_{-u}E}.
\end{equation}
Indeed, $E\cap T_uD=T_u(D\cap T_{-u}E)$. Translation preserves
$\mu$ and takes $T_{-u}E$ onto $E$, so
\[
 \mu_E(T_uD_{T_{-u}E})
 =\frac{\mu(D\cap T_{-u}E)}{\mu(E)}
 =\mu_{T_{-u}E}(D).
\]
Every term indexed by $I(E)$ therefore has mass $1/2$. Summing these
nonnegative masses in \eqref{eq:partition} gives $|I(E)|\leq2$.

If $I(E)=\{u\}$, let $B$ be the union of the other terms in
\eqref{eq:partition}. Each is a translate of a piecewise
$(d-1)$-periodic set, and they are disjoint, so $B$ has the same
property. The set $E$ is $\Gamma$-invariant, and $\Gamma$ has rank
$d$, so $E$ is $d$-periodic. Lemma~\ref{lem:complement} now implies
that $E\setminus B=T_uD_{T_{-u}E}$ is piecewise
$(d-1)$-periodic. Translating by $-u$ shows that
$D_{T_{-u}E}$ is piecewise $(d-1)$-periodic, contrary to the
definition of the exceptional component $T_{-u}E$. Hence
$|I(E)|\in\{0,2\}$. When $|I(E)|=2$, the two exceptional translates
already have total mass one, so all ordinary terms in
\eqref{eq:partition} are null.

\smallskip\noindent\emph{Step 2: make the identities pointwise on an orbit.}
For each ordinary component, choose a finite disjoint decomposition
$D_E=\bigsqcup_jB_{E,j}$ with rank-$(d-1)$ period groups $L_{E,j}$.
Replace each $L_{E,j}$ by $L_{E,j}\cap\Gamma$. Since $\Gamma$ has
finite index in $G$, the quotient
$L_{E,j}/(L_{E,j}\cap\Gamma)$ injects into $G/\Gamma$. Hence this
intersection has finite index in $L_{E,j}$ and therefore the same
rank.

Choose measurable representatives of these pieces and of the
components. Remove the null sets where their partition identities,
period identities, or component permutation identities fail, as well
as the ordinary terms declared null in Step~1. Remove every
$G$-translate of these null sets as well. There are finitely many
pieces and countably many group elements, so the remaining set $X_*$
is invariant and conull. Every required identity now holds at every
point of $X_*$ and along its entire orbit.

Fix $\omega\in X_*$ and set
\[
 a(n)=1_D(T_n\omega)=\omega(n),
 \qquad
 E_n=\text{the component containing }T_n\omega.
\]
For all $n,g\in G$, the component identities give
$E_{n+g}=T_gE_n$. In particular,
$E_{n+\gamma}=E_n$ for $\gamma\in\Gamma$, and
$E_{n-u}=T_{-u}E_n$ for $u\in F$. The tiling equation is
$\sum_{u\in F}a(n-u)=1$. For every ordinary piece, define
\[
 A_{E,j}=\{n\in G:T_n\omega\in B_{E,j}\}.
\]
Then $A_{E,j}+L_{E,j}=A_{E,j}$. These sets are pairwise disjoint,
and their union is exactly
\[
 \{n:E_n\text{ is ordinary and }a(n)=1\}.
\]

\smallskip\noindent\emph{Step 3: construct and color the periodic graph.}
Let $V=\{n\in G:E_n\text{ is exceptional}\}$. Whenever
$I(E_n)=\{u,v\}$, put the undirected edge $\{n-u,n-v\}$ in a graph
on $V$. Both endpoints lie in $V$, and they are distinct because
$u\ne v$. The $\Gamma$-periodicity of $n\mapsto E_n$ implies that
translating an edge by $\gamma\in\Gamma$ gives the edge associated
with $n+\gamma$. Thus the vertex and edge sets are $\Gamma$-periodic.

For such $n$, Step~1 and the choice of $X_*$ give $a(n-w)=0$ for
every $w\in F\setminus\{u,v\}$. Therefore
$a(n-u)+a(n-v)=1$, and $a|_V$ is a proper two-coloring.
Lemma~\ref{lem:color} supplies a proper coloring
$c\colon V\to\{0,1\}$ invariant under a finite-index subgroup
$K\leq\Gamma$.

\smallskip\noindent\emph{Step 4: verify the replacement and its periods.}
Define
\[
 a'(n)=
 \begin{cases}
  c(n),&n\in V,\\
  a(n),&n\notin V.
 \end{cases}
\]
If $I(E_n)$ is empty, then $n-w\notin V$ for every $w\in F$, and
\[
 \sum_{w\in F}a'(n-w)=\sum_{w\in F}a(n-w)=1.
\]
If $I(E_n)=\{u,v\}$, then, for every
$w\in F\setminus\{u,v\}$,
\[
 a'(n-w)=a(n-w)=0.
\]
Proper coloring therefore gives
\[
 \sum_{w\in F}a'(n-w)=c(n-u)+c(n-v)=1.
\]
Thus $1_F*a'=1$ at every $n\in G$.

The set $C=\{n\in V:c(n)=1\}$ is $K$-periodic, since $V$ is
$\Gamma$-periodic and $K\leq\Gamma$. Set
\[
 A'=\{n\in G:a'(n)=1\}.
\]
Then
\[
 A'=C\sqcup\bigsqcup_{E,j}A_{E,j}.
\]
Every $A_{E,j}$ has a rank-$(d-1)$ period group. The finite-index
group $K$ has rank $d$ and contains a rank-$(d-1)$ subgroup, so $C$
has the required periodicity as well. Hence $A'$ is piecewise
$(d-1)$-periodic, while $1_F*a'=1$ says exactly that
$F\oplus A'=G$.
\end{proof}

\begin{proof}[Proof of Theorem~\ref{thm:circle}]
Theorem~\ref{thm:dichotomy} gives a finite-index component partition
satisfying the hypotheses of Proposition~\ref{prop:replacement}.
That proposition produces a complement for $F$ which is a finite
disjoint union of sets, each invariant under a rank-$(d-1)$ subgroup
of $\mathbb Z^d$. The single-tile case of the periodicization theorem of Meyerovitch,
Sanadhya, and Solomon~\cite[Theorem~1.3]{MSS} states that a finite tile
admitting such a complement also admits a fully periodic complement.
Applying it to $F$ proves the theorem.
\end{proof}

\section{Nine-point tiles}\label{sec:nine}

For nine-point tiles, the dilation equations have a particularly
simple block structure. We first identify the case that gives a
lattice complement directly, then apply the spectral-circle criterion
to the remaining case.

\begin{proposition}\label{prop:nine}
For every integer $d\geq1$, every set $F\subseteq\mathbb Z^d$ with
$|F|=9$ that tiles $\mathbb Z^d$ by translations admits a fully
periodic tiling complement.
\end{proposition}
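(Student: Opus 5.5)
We reduce first to the intrinsic lattice. By Lemma~\ref{lem:intrinsic-reduction}, after translating $F$ so that $0\in F$, we may work in $G=\langle F\rangle\cong\mathbb Z^r$ with a tiling $F\oplus A'=G$. By Lemma~\ref{lem:extend}, any fully periodic complement found in $G$ extends to $\mathbb Z^d$, and undoing the translation finishes. If $r=1$, we identify $G$ with $\mathbb Z$ and invoke Newman's theorem. If $r=2$, we invoke Bhattacharya's theorem in $G\cong\mathbb Z^2$. The case $r=0$ cannot occur because $|F|=9$. So assume $r\geq3$.

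For $r\geq3$ we take $Q=3$ and use the common zero set $Z_{F,3}$ of~\eqref{eq:general-zeros}, with $n=9$. We fix an ergodic invariant measure $\mu$ on $X_F$, constructed as in Subsection~\ref{subsec:systems}. Lemma~\ref{lem:common-support} gives $\operatorname{supp}\sigma_f\subseteq Z_{F,3}\cup\{\mathbf1\}$.

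For $\theta\in Z_{F,3}$, Lemma~\ref{lem:blocks} partitions $F$ into blocks by the value of $\theta(u)^3$. Each block has zero sum, and inside a block all phase ratios lie in $\mu_3$. A nonnegative vanishing sum in $\mu_3$ consists of full copies of $\mu_3$, because $1,\zeta,\zeta^2$ with nonnegative coefficients sum to zero only if the coefficients are equal. So every block size is divisible by three, and there are $b\leq3$ blocks. Lemma~\ref{lem:blocks} also gives $\operatorname{rank}H\geq r-b+1$ and $\theta\in(3H)^\perp$, where $H$ is the within-block difference group.

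We then split into cases.
\begin{itemize}
\item If $\operatorname{rank}H=r$, then $(3H)^\perp$ is a finite torsion set.
\item If $\operatorname{rank}H=r-1$, then $(3H)^\perp$ is a finite union of rational affine circles by~\eqref{eq:annihilator-coordinates}.
\item If $\operatorname{rank}H=r-2$, then $b=3$ and every block has exactly three points. Writing $\theta(h)=\zeta^{\chi(h)}$ defines $\chi\colon H\to\mathbb Z/3\mathbb Z$, which is bijective on each normalized block because the block's phases form a rotated $3$-cycle. Lemma~\ref{lem:quotient} with $K=\mathbb Z/3\mathbb Z$ then gives a lattice complement, which is its own period group.
\end{itemize}
There are finitely many partitions of $F$. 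Hence if no lattice complement exists, $Z_{F,3}\cup\{\mathbf1\}$ lies in a finite union of rational affine circles and torsion points. Since $r\geq2$, each torsion point lies on a rational affine circle, as noted after~\eqref{eq:annihilator-coordinates}. So $\sigma_f$ is supported on finitely many rational affine circles.

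Theorem~\ref{thm:circle} is stated for $\mathbb Z^d$, so we apply it with $d$ replaced by $r\geq3$, after identifying $G$ with $\mathbb Z^r$. It yields a fully periodic complement in $G$, and Lemma~\ref{lem:extend} lifts it to $\mathbb Z^d$.

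The main obstacle is the bookkeeping in the rank-$(r-2)$ case. There we must check that the hypothesis $\operatorname{rank}H=r-b+1$ of Lemma~\ref{lem:quotient} holds exactly, which forces $b=3$. We must also check that $\chi$ is well defined as a homomorphism into $\mathbb Z/3\mathbb Z$, which uses $\theta(H)\subseteq\mu_3$. In the other case, every block-size configuration of $3+6$ or $9$ points must land in rank at least $r-1$. This follows from $b\leq2$ together with $\operatorname{rank}H\geq r-b+1$, or, when $b=3$, from $\operatorname{rank}H\geq r-1$ unless we are in the lattice case.
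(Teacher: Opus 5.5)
Your proposal is correct and follows the paper's proof essentially step for step. The steps match in order: reduction to the intrinsic lattice with Newman and Bhattacharya for $r=1,2$; the $Q=3$ block analysis forcing block sizes divisible by three and hence $b\leq3$; a lattice complement via Lemma~\ref{lem:quotient} when $\operatorname{rank}H=r-2$; otherwise spectral support on finitely many rational affine circles (absorbing torsion points and $\mathbf1$), which lets you apply Theorem~\ref{thm:circle} in $G\cong\mathbb Z^r$ and then Lemma~\ref{lem:extend}.
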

\begin{proof}
\emph{Step 1: the sizes of the zero-sum blocks.}
Fix an integer $d\geq1$ and a set $F\subseteq\mathbb Z^d$ with
$|F|=9$ that tiles $\mathbb Z^d$ by translations. By
Lemma~\ref{lem:intrinsic-reduction}, after translating $F$ we may
assume that $0\in F$, put
$G=\langle F\rangle\cong\mathbb Z^r$, and restrict the given tiling
to $G$. For $r=1$, Newman's theorem~\cite{Newman} gives a fully
periodic complement in $G$. For $r=2$, Bhattacharya
\cite[Theorem~1.1]{Bhattacharya} gives one. In either case,
Lemma~\ref{lem:extend} gives a fully periodic complement in
$\mathbb Z^d$. We may therefore assume $r\geq3$.

Take $Q=3$ in~\eqref{eq:general-zeros}; all the dilation factors
$1+3j$ are coprime to $9$. For each $\theta\in Z_{F,3}$, let
$\mathcal B_\theta$ be the partition of $F$ by equality of
$\theta(u)^3$, let $b_\theta=|\mathcal B_\theta|$, and let
$H_\theta$ be the subgroup generated by the differences of points
within its blocks. Lemma~\ref{lem:blocks} shows that all these blocks
have zero phase sum.

For any $\theta\in Z_{F,3}$, any block $B\in\mathcal B_\theta$,
and any base point $a\in B$, each phase $\theta(u-a)$ belongs to
$\{1,\zeta,\zeta^2\}$, where $\zeta=\exp(2\pi i/3)$. Let $n_j$
count the occurrences of $\zeta^j$. Dividing the zero-sum equation
for $B$ by $\theta(a)$ gives
\[
 0=n_0+n_1\zeta+n_2\zeta^2
   =(n_0-n_2)+(n_1-n_2)\zeta.
\]
Since $1$ and $\zeta$ are linearly independent over $\mathbb Q$,
we have $n_0=n_1=n_2$. Thus every block has positive size divisible
by three, so $b_\theta\leq3$. Lemma~\ref{lem:blocks} also gives, for
every $\theta\in Z_{F,3}$,
\[
 \operatorname{rank}H_\theta\geq r-b_\theta+1\geq r-2,
 \qquad \theta\in(3H_\theta)^\perp.
\]

\smallskip\noindent\emph{Step 2: a lattice complement in the rank-$r-2$ case.}
Suppose there is a $\theta\in Z_{F,3}$ with
$\operatorname{rank}H_\theta=r-2$. Fix such a $\theta$ and abbreviate
$\mathcal B=\mathcal B_\theta$, $b=b_\theta$, and $H=H_\theta$.
The rank inequality forces $b=3$. Since the three block sizes are
positive multiples of three with sum nine, every block has size three.

Write the blocks as $B_0,B_1,B_2$, with bases
$a_0=0,a_1,a_2$. The character $\theta$ takes values in
$\{1,\zeta,\zeta^2\}$ on every generator of $H$, hence on
all of $H$. The equation
$\theta(h)=\zeta^{\chi(h)}$ therefore defines a homomorphism
$\chi\colon H\to\mathbb Z/3\mathbb Z$. Step~1 shows that each
normalized block $B_j-a_j$ has one point of each phase, so
$\chi$ maps $B_j-a_j$ bijectively onto $\mathbb Z/3\mathbb Z$.

To make the resulting quotient explicit, the rank argument in
Lemma~\ref{lem:quotient} gives $G=H\oplus\mathbb Za_1
\oplus\mathbb Za_2$. Define
\[
 \Psi(h+m_1a_1+m_2a_2)
   =\bigl(\chi(h),\,m_1+2m_2\bmod3\bigr)
   \in(\mathbb Z/3\mathbb Z)^2.
\]
For $j=0,1,2$, the restriction of $\Psi$ to $B_j$ is a
bijection onto $(\mathbb Z/3\mathbb Z)\times\{j\}$.
Consequently $\Psi|_F$ is bijective, and
$F\oplus\ker\Psi=G$ by Lemma~\ref{lem:quotient}.
Thus $\ker\Psi$ is a lattice complement, and
Lemma~\ref{lem:lattice-index} gives
$[G:\ker\Psi]=|F|=9$. Since $\ker\Psi$ is a subgroup, it is
invariant under itself and hence is a fully periodic complement.

\smallskip\noindent\emph{Step 3: spectral circles in the remaining case.}
Otherwise, $\operatorname{rank}H_\theta\geq r-1$ for every
$\theta\in Z_{F,3}$. Let
\[
 \mathcal P=\{\mathcal B_\theta:\theta\in Z_{F,3}\}.
\]
This is a finite collection of partitions of $F$. For each
$\mathcal B\in\mathcal P$, let $H_{\mathcal B}$ be the subgroup
generated by differences within its blocks. If $Z_{F,3}=\varnothing$,
then $\mathcal P=\varnothing$ and the union below is empty. Otherwise,
Lemma~\ref{lem:blocks} yields
\[
 Z_{F,3}\subseteq
    \bigcup_{\mathcal B\in\mathcal P}(3H_{\mathcal B})^\perp,
 \qquad
 \operatorname{rank}H_{\mathcal B}\geq r-1
 \quad(\mathcal B\in\mathcal P).
\]
For each $\mathcal B\in\mathcal P$, apply
\eqref{eq:annihilator-coordinates} to $L=3H_{\mathcal B}$. Its rank
is $r-1$ or $r$, so its annihilator is, respectively, a finite union
of rational affine circles or a finite torsion set.

Choose an isomorphism $G\cong\mathbb Z^r$ and henceforth use it to
identify the two groups and their duals. Since $r\geq3$, every torsion
point in the finite sets above can be included in a rational affine
circle. Choose an ergodic invariant measure on the nonempty tiling
space $X_F$, as in Subsection~\ref{subsec:systems}. By
Lemma~\ref{lem:common-support}, the spectral measure of its coordinate
indicator is supported on $Z_{F,3}\cup\{\mathbf1\}$. The preceding
description, with an additional rational affine circle through
$\mathbf1$ if needed, verifies the hypothesis of
Theorem~\ref{thm:circle}. Hence $F$ has a fully periodic complement
in $G$.

In Steps~2 and~3, Lemma~\ref{lem:extend} extends the complement
to $\mathbb Z^d$. Undoing the initial translation completes the proof.
\end{proof}

\begin{proof}[Completion of the proof of Theorem~\ref{thm:main}]
Subsection~\ref{subsec:twice-completion} proves the result for $|F|=2q$.
Proposition~\ref{prop:nine} proves it for $|F|=9$.
\end{proof}

\section{Periods and decidability}\label{sec:effective}

The existence of a periodic complement has two further consequences.
We can remove prime factors of its scalar period that do not divide
the tile cardinality, and we can search the resulting finite
quotients to decide tileability.

\subsection{Prime-supported periods}

The one-dimensional period reduction is due to Coven and
Meyerowitz~\cite[Lemma~2.3]{CM}; see also \L aba and
Zakharov~\cite[Lemma~1]{LZ}. The finite-quotient argument below
works in arbitrary rank.

\begin{proposition}[Prime-supported periods]\label{prop:prime-periods}
Every finite set $F\subseteq G\cong\mathbb Z^r$ of cardinality $n$ that
has a fully periodic complement also has an $sG$-periodic complement
for some integer $s$ all of whose prime divisors divide $n$.
Equivalently, it has an $n^kG$-periodic complement for some $k\geq0$.
\end{proposition}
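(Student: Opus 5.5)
Suppose $F\oplus B=G$ with $B+MG=B$ for some $M\geq1$. Write $M=s t$, where every prime divisor of $s$ divides $n$ and $\gcd(t,n)=1$. The plan is to average the periodic tiling over the $t$-part of the period. The $s$-part is left alone.

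\emph{Step 1: pass to a finite group.} Since $B$ is $MG$-periodic, the descent argument in the proof of Lemma~\ref{lem:extend} applies. It shows that $\pi\colon G\to\bar G=G/MG$ is injective on $F$ and that $\pi(F)\oplus\pi(B)=\bar G$. The Chinese remainder theorem gives the decomposition
\[
 \bar G\cong G/sG\times G/tG,
\]
and $|G/tG|=t^r$ is coprime to $n$.

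\emph{Step 2: kill the $t$-component by a coprime dilation.} Let $b=1_{\pi(B)}$ on $\bar G$. The idea is that the projection of $F$ to the $t$-component can be annihilated by a dilation coprime to $n$. Choose an integer $\ell$ with
\[
 \ell\equiv1\pmod{s},\qquad \ell\equiv0\pmod{t}.
\]
If $\ell$ is coprime to $n$, Lemma~\ref{lem:dilation}, applied in the finite abelian group $\bar G$, gives $1_{\ell\pi(F)}*b=1$.

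\emph{Step 3: arrange $\gcd(\ell,n)=1$.} This is the step I expect to need care. A prime $p\mid n$ divides $s$ only if $p\mid s$. To make sure no such $p$ divides $\ell$, use Dirichlet's theorem, or directly the Chinese remainder theorem over the primes of $n$ not dividing $s$. For such a prime $p$ we have $p\nmid t$, so $p\nmid st$ and we may add the congruence $\ell\equiv1\pmod p$. Thus $\ell$ can be chosen coprime to $n$.

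\emph{Step 4: read off the dilated tiling.} Multiplication by $\ell$ acts as the identity on the $G/sG$ factor and as zero on the $G/tG$ factor. Hence $\ell\pi(F)=\{(\pi_s(f),0):f\in F\}$, counted as a multiset.

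Next, restrict $b$ to each coset of $\{0\}\times G/tG$, that is, to each fibre $y\in G/tG$. Each restriction gives a function $b_y$ on $G/sG$ satisfying $1_{F_s}*b_y=1$. Here $F_s$ is the multiset image of $F$ in $G/sG$, of total size $n$.

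Pick one fibre $y_0$ and put $B'=\pi_s^{-1}(\operatorname{supp}b_{y_0})\subseteq G$. Then
\[
 \sum_{f\in F}1_{B'}(x-f)=\sum_{f}b_{y_0}(\pi_s(x)-\pi_s(f))=1
\]
for every $x\in G$. A $0/1$ solution with all terms summing to one forces $\pi_s$ to be injective on $F$, since a repeated point would contribute a value of $2$. Therefore $F\oplus B'=G$ and $B'+sG=B'$, which proves the claim.

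\emph{Step 5: the equivalent form.} Since every prime divisor of $s$ divides $n$, we have $s\mid n^k$ for large $k$. An $sG$-periodic complement is then also $n^kG$-periodic, and the converse direction is immediate.
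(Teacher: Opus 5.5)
Your proof is correct and follows essentially the same route as the paper: write $M=st$, descend to $G/MG$, apply the coprime dilation lemma there, and read off an $sG$-periodic tiling from the dilated equation. The only difference is minor. The paper dilates by $t$ itself and identifies $tQ$ with $G/sG$ via $x+sG\mapsto tx+MG$, whereas you dilate by a CRT-chosen $\ell\equiv1\pmod s$, $\ell\equiv0\pmod t$, with extra congruences forcing $\gcd(\ell,n)=1$, so that you can restrict to a CRT fibre without that identification.
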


\begin{proof}
\emph{Step 1: dilation in a finite quotient.}
Let $P\leq G$ be a finite-index period subgroup for a complement.
Choose $M\geq1$ with $MG\subseteq P$, and write $M=st$, where
$s$ contains precisely the full prime-power factors of $M$
whose primes divide $n$. Then $\gcd(t,n)=1$.

Put $Q=G/MG$, and denote the image of $u\in G$ in $Q$ by
$\bar u$. The complement descends to a function
$\bar a\colon Q\to\{0,1\}$ satisfying
$\sum_{u\in F}\bar a(y-\bar u)=1$ for every $y\in Q$.
Here and below the sum is indexed by the original points of $F$,
so any coincidences in a quotient retain their multiplicities.
Lemma~\ref{lem:dilation}, applied to this finite-group tiling,
gives
\[
 \sum_{u\in F}\bar a(y-t\bar u)=1\qquad(y\in Q).
\]
Every $t\bar u$ belongs to $tQ$. Restricting the equation to
$y\in tQ$ therefore uses only values of
$b=\bar a|_{tQ}$ and gives a tiling equation on $tQ$.

\smallskip\noindent\emph{Step 2: removal of the factor $t$.}
Consider the map
\[
 \phi\colon G/sG\longrightarrow tQ,
 \qquad \phi(x+sG)=tx+MG.
\]
It is well-defined because $t(sG)=MG$, and is surjective by
the definition of $tQ$. If $\phi(x+sG)=0$, then
$tx=stz$ for some $z\in G$. Since $G$ is torsion-free,
$x=sz$, proving injectivity.

Pull $b$ back under this isomorphism: set
$c(x+sG)=b(tx+MG)$. For every $x\in G$, the restricted
dilation equation becomes
\[
 \sum_{u\in F}c(x-u+sG)
   =\sum_{u\in F}b(tx-tu+MG)=1.
\]
Thus $a'(x)=c(x+sG)$ is a binary, $sG$-periodic tiling
indicator on $G$.

This equation also rules out collisions among tile points modulo
$sG$. Indeed, $c$ takes the value one somewhere, say at $z+sG$.
If distinct $u,v\in F$ had $u-v\in sG$, the equation at
$x=z+u$ would contain the two summands
$c(z+sG)=c(z+u-v+sG)=1$, a contradiction.
Finally, every prime divisor of $s$ divides $n$, so $s\mid n^k$
for some $k$. Since $n^kG\subseteq sG$, the same complement
is $n^kG$-periodic.
\end{proof}

\begin{corollary}\label{cor:prime-periods}
Let $d\in\mathbb Z_{\geq1}$, and let $F\subseteq\mathbb Z^d$ be a
finite set that tiles $\mathbb Z^d$ by translations. If $|F|=2q$
for an odd prime $q$, then $F$ has a
$(2q)^k\mathbb Z^d$-periodic complement for some
$k\in\mathbb Z_{\geq0}$. If $|F|=9$, then $F$ has a
$3^k\mathbb Z^d$-periodic complement for some
$k\in\mathbb Z_{\geq0}$.
\end{corollary}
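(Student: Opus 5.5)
The corollary is a direct combination of Theorem~\ref{thm:main} and Proposition~\ref{prop:prime-periods}, applied with $G=\mathbb Z^d$ and $r=d$. First, since $F$ tiles $\mathbb Z^d$ and $|F|\in\{2q,9\}$, Theorem~\ref{thm:main} supplies $M\geq1$ and $B\subseteq\mathbb Z^d$ with $F\oplus B=\mathbb Z^d$ and $B+M\mathbb Z^d=B$. Thus $F$ has a fully periodic complement with finite-index period subgroup $P=M\mathbb Z^d$, which is exactly the hypothesis of Proposition~\ref{prop:prime-periods}.

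Next, I apply Proposition~\ref{prop:prime-periods} with $n=|F|$. It yields an integer $s$ all of whose prime divisors divide $n$, together with an $s\mathbb Z^d$-periodic complement; equivalently, an $n^k\mathbb Z^d$-periodic complement for some $k\geq0$. For $n=2q$ this is the first assertion verbatim. For $n=9$ the prime divisors of $s$ all equal $3$, so $s=3^{k}$ for some $k\geq0$, and an $s\mathbb Z^d$-periodic set is $3^k\mathbb Z^d$-periodic. Alternatively, one can use $9^{k'}=3^{2k'}$ and take $k=2k'$.

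No step is a genuine obstacle, since all the work sits in the two cited results. The only point to check is that Proposition~\ref{prop:prime-periods} is stated for an arbitrary $G\cong\mathbb Z^r$, so it applies directly to the ambient lattice $\mathbb Z^d$ rather than to the intrinsic lattice. There is then no need to pass through Lemma~\ref{lem:extend} a second time, and the period subgroup obtained is literally $(2q)^k\mathbb Z^d$ or $3^k\mathbb Z^d$ as claimed.
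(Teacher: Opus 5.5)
Your proposal is correct and matches the paper's proof. Theorem~\ref{thm:main} supplies a fully periodic complement, and Proposition~\ref{prop:prime-periods} applied with $G=\mathbb Z^d$ and $n=|F|$ gives an $|F|^m\mathbb Z^d$-periodic complement. In the nine-point case one takes $k=2m$; your direct observation that $s$ is a power of $3$ works equally well.
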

\begin{proof}
By Theorem~\ref{thm:main}, $F$ has a fully periodic tiling complement.
Applying Proposition~\ref{prop:prime-periods} with
$G=\mathbb Z^d$ and $n=|F|$ gives an
$|F|^m\mathbb Z^d$-periodic complement for some
$m\in\mathbb Z_{\geq0}$. If $|F|=2q$, take $k=m$.
If $|F|=9$, then $|F|^m=9^m=3^{2m}$, so take
$k=2m\in\mathbb Z_{\geq0}$.
\end{proof}

\subsection{Decidability}

For the cardinalities in Theorem~\ref{thm:main}, tileability has
a finite certificate given by a periodic complement. Nontileability
has a finite certificate given by an inconsistent collection of
covering equations. Searching for both gives the following procedure.

\begin{corollary}\label{cor:decidable}
There is an algorithm which, given $d\geq1$ and a finite set
$F\subseteq\mathbb Z^d$ with $|F|=2q$ for an odd prime $q$
or $|F|=9$, decides whether $F$ tiles $\mathbb Z^d$.
If $F$ tiles, the algorithm produces a fully periodic complement.
\end{corollary}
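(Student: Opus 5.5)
The algorithm runs two semi-decision procedures in parallel, one searching for a positive certificate and one for a negative certificate. Theorem~\ref{thm:main}, sharpened by Corollary~\ref{cor:prime-periods}, guarantees that the first halts whenever $F$ tiles. A compactness argument guarantees that the second halts whenever $F$ does not tile.

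\emph{Positive search.} For $k=0,1,2,\ldots$ let $M_k=|F|^k$, or $M_k=3^k$ when $|F|=9$. Form the finite group $Q_k=\mathbb Z^d/M_k\mathbb Z^d$, of order $M_k^d$, and enumerate all subsets $C\subseteq Q_k$. For each one, check the finite condition
\[
 \sum_{u\in F}1_C(y-\bar u)=1\qquad(y\in Q_k),
\]
where the sum runs over the original points of $F$, so that coincidences modulo $M_k$ are counted with multiplicity. If $C$ passes, then $A=\pi^{-1}(C)$ satisfies $1_F*1_A=1$ on $\mathbb Z^d$ by the same computation as in the proof of Lemma~\ref{lem:extend}. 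Hence $A$ is an $M_k\mathbb Z^d$-periodic complement, and the algorithm outputs it. Conversely, if $F$ tiles, Corollary~\ref{cor:prime-periods} supplies an $M_k\mathbb Z^d$-periodic complement $A$ for some $k$. Its image in $Q_k$ passes the check, so this search halts.

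\emph{Negative search.} For $N=1,2,\ldots$ let $B_N=[-N,N]^d\cap\mathbb Z^d$. Test whether some $a\colon B_N+(-F)\to\{0,1\}$, that is, some $a$ defined on all points $x-u$ with $x\in B_N$ and $u\in F$, satisfies $\sum_{u\in F}a(x-u)=1$ for every $x\in B_N$. This is a finite check. If no such $a$ exists, $F$ certainly does not tile, and the algorithm reports this.

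It remains to show the negative search halts when $F$ does not tile. Suppose instead that for every $N$ a local solution $a_N$ exists. Extend each $a_N$ by zero to a point of $\{0,1\}^{\mathbb Z^d}$. By compactness of the product space, some subsequence converges coordinatewise to a limit $a$. Each tiling equation at a point $x$ involves only the finitely many coordinates $x-F$. This equation holds for $a_N$ once $N\ge\|x\|_\infty$, so it also holds for $a$. Thus $a\in X_F$ and $F$ tiles, a contradiction. Exactly one of the two searches therefore halts, and the algorithm decides tileability.

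The main obstacle is not this argument but the a priori period bound, which is already supplied by Theorem~\ref{thm:main} and Proposition~\ref{prop:prime-periods}. The one point needing care is that only the existence of some $k$ is known, with no bound on it. This is why the two searches must be interleaved rather than run in sequence. The algorithm still terminates on every input, but the proof gives no complexity bound.
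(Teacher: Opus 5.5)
Your proposal is correct and follows essentially the same route as the paper: interleave a search over $(\mathbb Z/m_k\mathbb Z)^d$ with $m_k=(2q)^k$ or $3^k$, justified by Corollary~\ref{cor:prime-periods}, with a search for an inconsistent finite system on boxes $[-R,R]^d$, justified by compactness. The only difference is cosmetic: you phrase the compactness step via a convergent subsequence of zero-extended local solutions, whereas the paper intersects the nested closed sets $C_R$.
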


\begin{proof}
\emph{Step 1: finite certificates for tileability.}
For $k=0,1,\ldots$, put $m_k=(2q)^k$ in the twice-prime case
and $m_k=3^k$ in the nine-point case. On
$Q_k=(\mathbb Z/m_k\mathbb Z)^d$, test all binary functions
$a\colon Q_k\to\{0,1\}$ for the finite system
\[
 \sum_{u\in F}a(x-u)=1\qquad(x\in Q_k),
\]
where subtraction is taken modulo $m_k$ and all summands are
retained. Each test is finite. A solution lifts to the periodic
complement $\{x\in\mathbb Z^d:a(x\bmod m_k)=1\}$.
If $F$ tiles, Corollary~\ref{cor:prime-periods} guarantees
that a solution occurs for some $k$.

\smallskip\noindent\emph{Step 2: finite certificates for nontileability.}
For $R=0,1,\ldots$, let $B_R=[-R,R]^d\cap\mathbb Z^d$.
Introduce one binary variable $a_y$ for each $y\in B_R-F$,
and test the finite system
\[
 \sum_{u\in F}a_{x-u}=1\qquad(x\in B_R).
\]
If it is inconsistent, no global tiling exists. Conversely,
suppose it is consistent for every $R$. In the compact product
space $\Omega=\{0,1\}^{\mathbb Z^d}$, define
\[
 C_R=\left\{a\in\Omega:
          \sum_{u\in F}a(x-u)=1\text{ for every }x\in B_R\right\}.
\]
Each $C_R$ is closed because its defining conditions involve
only finitely many coordinates. It is nonempty because a
solution on $B_R-F$ can be extended arbitrarily to the other
coordinates. Also $C_{R+1}\subseteq C_R$. Compactness gives
$\bigcap_{R\geq0}C_R\ne\varnothing$; every configuration in
this intersection satisfies the tiling equation at every point
of $\mathbb Z^d$. Therefore, if $F$ does not tile, the search
must reach an inconsistent finite system.

\smallskip\noindent\emph{Step 3: termination.}
At stage $j$, perform the finite test of Step~1 with $k=j$ and
the finite test of Step~2 with $R=j$, unless an earlier test has
already terminated the algorithm. A successful test in Step~1
returns a periodic complement; an inconsistent system in Step~2
returns nontileability. These conclusions cannot conflict, and
Steps~1 and~2 show that one is eventually reached.
\end{proof}

The compactness argument is standard; see~\cite{Bhattacharya,Szegedy}.
An explicit bound for the successful period exponent would give a
corresponding quantitative complexity bound.

\section*{Declaration of generative AI use}
OpenAI's ChatGPT (Codex) was used in the development and preparation of this manuscript for mathematical exploration, proof drafting and checking, literature searches, and language editing. Responsibility for the results and their presentation rests with the authors.


\begin{thebibliography}{99}
\small
\setlength{\itemsep}{3pt plus 1pt minus 1pt}

\bibitem{PeriodicGraphs}
T. Abrishami, L. Esperet, U. Giocanti, M. Hamann, P. Knappe, and
R.~G. M\"oller,
\emph{Periodic colorings and orientations in infinite graphs},
Combinatorial Theory \textbf{5} (2025), no.~4, Paper No.~5.

\bibitem{Bhattacharya}
S. Bhattacharya,
\emph{Periodicity and decidability of tilings of $\mathbb Z^2$},
American Journal of Mathematics \textbf{142} (2020), no.~1, 255--266.

\bibitem{Chen}

R. Chen,
\emph{Decompositions and measures on countable Borel equivalence relations},
Ergodic Theory and Dynamical Systems \textbf{41} (2021), no.~12,
3671--3703.


\bibitem{CM}
E.~M. Coven and A. Meyerowitz,
\emph{Tiling the integers with translates of one finite set},
Journal of Algebra \textbf{212} (1999), no.~1, 161--174.

\bibitem{DasNg}

S. Das and J. Ng,
\emph{Lecture 21: Smith Normal Form},
Algebra II Student Notes, MIT OpenCourseWare RES.18-012, Spring 2022,
\href{https://ocw.mit.edu/courses/res-18-012-algebra-ii-student-notes-spring-2022/mit18_702s22_lect21.pdf}
{lecture notes (PDF)}.


\bibitem{deBruijn}
N.~G. de Bruijn,
\emph{On the factorization of cyclic groups},
Nederl. Akad. Wetensch. Proc. Ser.~A \textbf{56}
(= Indagationes Mathematicae \textbf{15}) (1953), 370--377.

\bibitem{GGR}
J. Greb\'{\i}k, R. Greenfeld, V. Rozho\v{n}, and T. Tao,
\emph{Measurable tilings by abelian group actions},
International Mathematics Research Notices \textbf{2023} (2023),
no.~23, 20211--20251.

\bibitem{GreenfeldSurvey}
R. Greenfeld,
\emph{Translational tilings: structured or wild?},
in \emph{Proceedings of the International Congress of Mathematicians
2026, Vol.~3: Invited Lectures (Sections 1--4)},
S. Friedlander and Y. Tschinkel (eds.),
SIAM, Philadelphia, 2026, 76--96.

\bibitem{GTCounterexample}
R. Greenfeld and T. Tao,
\emph{A counterexample to the periodic tiling conjecture},
Annals of Mathematics (2) \textbf{200} (2024), no.~1, 301--363.

\bibitem{GTStructure}
R. Greenfeld and T. Tao,
\emph{The structure of translational tilings in $\mathbb Z^d$},
Discrete Analysis (2021), Paper No.~16, 28 pp.

\bibitem{GrunbaumShephard}
B. Gr\"unbaum and G.~C. Shephard,
\emph{Tilings and Patterns},
W.~H. Freeman and Company, New York, 1987.

\bibitem{Hatcher}

A. Hatcher,
\emph{Algebraic Topology},
Cambridge University Press, Cambridge, 2002,
\href{https://pi.math.cornell.edu/~hatcher/AT/AT.pdf}{author's PDF}.


\bibitem{HorakKim}
P. Horak and D. Kim,
\emph{Algebraic method in tilings},
preprint, 2016,
\href{https://arxiv.org/abs/1603.00051}{arXiv:1603.00051}.

\bibitem{Khetan}
A. Khetan,
\emph{A periodicity result for tilings of $\mathbb Z^3$ by clusters
of prime-squared cardinality},
preprint, 2021, revised 2026,
\href{https://arxiv.org/abs/2109.14179v3}{arXiv:2109.14179v3}.

\bibitem{LZ}
I. \L aba and D. Zakharov,
\emph{On the minimal period of integer tilings},
Bulletin of the London Mathematical Society \textbf{57} (2025),
no.~4, 1160--1170.

\bibitem{LagariasWang}
J.~C. Lagarias and Y. Wang,
\emph{Tiling the line with translates of one tile},
Inventiones Mathematicae \textbf{124} (1996), no.~1--3, 341--365.

\bibitem{LamLeung}
T.~Y. Lam and K.~H. Leung,
\emph{On vanishing sums of roots of unity},
Journal of Algebra \textbf{224} (2000), no.~1, 91--109.

\bibitem{Lindenstrauss}
E. Lindenstrauss,
\emph{Pointwise theorems for amenable groups},
Inventiones Mathematicae \textbf{146} (2001), no.~2, 259--295.

\bibitem{MSS}
T. Meyerovitch, S. Sanadhya, and Y. Solomon,
\emph{Periodicity of joint co-tiles in $\mathbb Z^d$},
Discrete Analysis (2024), Paper No.~13, 32 pp.

\bibitem{Newman}
D.~J. Newman,
\emph{Tesselation of integers},
Journal of Number Theory \textbf{9} (1977), no.~1, 107--111.

\bibitem{PoonenRubinstein}
B. Poonen and M. Rubinstein,
\emph{The number of intersection points made by the diagonals of a
regular polygon},
SIAM Journal on Discrete Mathematics \textbf{11} (1998), no.~1,
135--156.

\bibitem{Rudin}
W. Rudin,
\emph{Fourier Analysis on Groups},
Interscience Tracts in Pure and Applied Mathematics, vol.~12,
Interscience Publishers, New York, 1962.

\bibitem{Stein}
S.~K. Stein,
\emph{Algebraic tiling},
American Mathematical Monthly \textbf{81} (1974), no.~5, 445--462.

\bibitem{Szegedy}
M. Szegedy,
\emph{Algorithms to tile the infinite grid with finite clusters},
in \emph{Proceedings of the 39th Annual IEEE Symposium on
Foundations of Computer Science (FOCS 1998)},
IEEE Computer Society, 1998, 137--145.

\bibitem{Weyl}
H. Weyl,
\emph{\"Uber die Gleichverteilung von Zahlen mod. Eins},
Mathematische Annalen \textbf{77} (1916), no.~3, 313--352.

\end{thebibliography}
\end{document}